\numberwithin{equation}{section}
\theoremstyle{plain}
\newtheorem{thm}{Theorem}[section]
\newtheorem{lem}{Lemma}[section]
\newtheorem{cor}{Corollary}[section]
\newtheorem{remark}{Remark}[section]
\renewcommand{\vec}[1]{\boldsymbol{#1}}
\numberwithin{equation}{section}
\DeclareMathOperator*{\argmax}{arg\,max}
\begin{document}

\begin{frontmatter}
\title{A New Step-down Procedure for Simultaneous Hypothesis Testing Under Dependence}
% % \title{A New Multiple Testing Procedure Under Dependence}
\runtitle{A New Step-down Testing Procedure Under Dependence}
% \thankstext{T1}{Footnote to the title with the ``thankstext'' command.}

\begin{aug}
\author{\fnms{Prasenjit} \snm{Ghosh}\thanksref{addr1}\ead[label=e1]{prasenjit.stat@presiuniv.ac.in}}
\and
\author{\fnms{Arijit} \snm{Chakrabarti}\thanksref{addr2}\ead[label=e2]{arc@isical.ac.in}}

\runauthor{P. Ghosh and A. Chakrabarti}

\address[addr1]{Department of Statistics, Presidency University, Kolkata - 700073, India.\\ 
    \printead{e1} % print email address of "e1"
}

\address[addr2]{Applied Statistics Unit, Indian Statistical Institute, Kolkata - 700108, India.\\
    \printead{e2}
}

\end{aug}

\begin{abstract}
In this article, we consider the problem of simultaneous testing of hypotheses when the individual test statistics are not necessarily independent. Specifically, we consider the problem of simultaneous testing of point null hypotheses against two-sided alternatives for the mean parameters of normally distributed random variables. We assume that conditionally given the vector of means, these random variables jointly follow a multivariate normal distribution with a known but arbitrary covariance matrix. We consider a Bayesian framework where each unknown mean parameter is modeled through a two-component 'spike and slab' mixture prior. This way, unconditionally the test statistics jointly have a mixture of multivariate normal distributions. A new testing procedure is developed that uses the dependence among the test statistics and works in a 'step-down' manner. This procedure is general enough to be applied for non-normal data. A decision theoretic justification in favor of the proposed testing procedure has been provided by showing that unlike many traditional $p$-value based stepwise procedures, this new method possesses a certain 'convexity property' which makes it admissible with respect to a vector risk function that captures the risks for the individual testing problems. An alternative representation of the proposed test statistics has also been established resulting in great simplification in the computational complexity. It is demonstrated through extensive simulations that for various forms of dependence and a wide range of sparsity levels, the proposed testing procedure compares quite favorably with several existing multiple testing procedures available in the literature in terms of overall misclassification probability.
\end{abstract}
\begin{keyword}[class=MSC]
\kwd[Primary ]{62C15}
\kwd{62F15}
\kwd[; secondary ]{62C25}\kwd{62F03}
\end{keyword}

\tableofcontents
\end{frontmatter}

\section{Introduction}
Over the past two decades and a half, there has been a growing need to do statistical inference on large datasets involving a large number of parameters used in modeling those. Such datasets arise from modern genomic and astronomical experiments and various other fields like brain imaging, medicine, economics, finance, meteorology etc. Individual inference on multiple problems lead to errors which can accumulate to make the overall inference erroneous unacceptably often. This necessitates adjustments due to multiplicity to be made to the individual procedures to make the overall error of inference under bounds. Multiple hypothesis testing and the corresponding multiplicity adjustments have been extensively used statistical tools in recent times in the analysis of large datasets. Thus multiple testing has been an area of very active research during this period. Many different multiple testing procedures have been proposed in the literature so far, mostly with the aim of controlling some overall measure of type I error at a predetermined level $\alpha \in (0,1)$, the most notable ones being the family-wise error rate (FWER) and the False discovery rate (FDR). The FWER, defined as the probability of making at least one false rejection, is most popularly controlled by the well-known Bonferroni correction. The FWER criterion turns out to be too stringent when the number of hypotheses being tested is very large. A more pragmatic approach would be to try to control the rate of erroneous rejections instead, which is measured by the FDR and was introduced in \cite{BH1995}. FDR is the expected proportion of erroneously rejected null hypotheses among all rejections occurred, the proportion being set to zero in case of no rejection. Assuming that the test statistics are independent, \cite{BH1995} demonstrated that a specific step-up multiple testing procedure controls the FDR at the desired tolerance level $\alpha\in(0,1)$. This procedure will henceforth be referred to as the BH method. Following this seminal work, a number of other FDR controlling procedures have been proposed in the literature. Some pertinent references are \cite{BKY2006}, \cite{BL1999}, \cite{BY2001}, \cite{BLROQ2009}, \cite{GBS2009}, \cite{SKS_2002}, \cite{SKS_2007}, \cite{SKS2008}, \cite{SG2009}, \cite{STO_2002} and \cite{STS_2004}, among others.\newline

Besides their proven ability to control some overall type I error measure, some such procedures have also been shown to have some appealing theoretical properties when the test statistics are independent. As for example, see \cite{BCFG2011}, \cite{CHI2008}, \cite{FDR2009}, \cite{GW2002}, \cite{GR2008}, \cite{LRS2005} and \cite{NR2012}, in this context. But, in practice, test statistics are often correlated. It is therefore natural to ask whether such nice theoretical properties continue to hold if the assumption of independence is violated and what are the effects of dependence on the performances of such procedures. In a prominent line of research, some of these procedures and variations of them have been shown to control some overall type I error measure under certain specific forms of dependence such as ``positive regression dependence on subsets'', ``conditional dependence'' or ``weak dependence''. See, for instance, \cite{BY2001}, \cite{BLROQ2009}, \cite{GUO_2009}, \cite{LR2005}, \cite{RS2006}, \cite{SKS_2002}, \cite{SKS_2007}, \cite{SKS2008}, \cite{SG2009} and \cite{STS_2004}, among others. However, these results do not ensure the same error controlling property in other forms of dependence. In situations, when no specific assumption is made about the joint distribution of the test statistics, \cite{BY2001} proposed a certain modification of the BH method which is commonly referred to as the Benjamini-Yekutieli (BY) procedure. The aforesaid method was shown to have the FDR controlling property at a given tolerance level $\alpha$ under any arbitrary form of dependence, but this comes at the expense of the overly conservative nature of this procedure, sometimes even worse than the classical Bonferroni procedure. It should be noted that typical stepwise or fixed threshold approaches such as those cited above, are based on $p$-values derived from the marginal distributions of the individual test statistics, and hence, do not take into account the correlation between them. This may have an adverse effect on the performances of these procedures, particularly, if the correlation is {\it not} weak. For example, \cite{GGQY2007}, \cite{KY2007}, \cite{QBKY2005}, \cite{QKY2005} and \cite{QXGY2007} showed that, in presence of strong or long range dependence as in microarray experiments, the number of false discoveries and the number of false non-discoveries of standard multiple testing procedures have unacceptably high variability and hence, they become very unstable. This is similar to the observations of \cite{FH2001}, \cite{FH2002} who pointed out that when test statistics are correlated, the expected number of false discoveries of typical FWER or FDR controlling procedures need not be finite as the number of tests grows to infinity. As a result, such testing procedures may become very unreliable under dependence. See, also \cite{OWEN2005} in this context. \cite{Efron_2007} showed that failing to incorporate the dependence may lead to inferences which are quite misleading especially in high correlation structures. To sum up, traditional FDR or FWER controlling multiple testing procedures which disregard the correlation between the test statistics may lead to too many erroneous decisions and thus loss of efficiency. On the other hand, based on information theoretic arguments, \cite{HJ2010} opined that the presence of correlation can indeed be a ``blessing'' rather than a ``curse'' (as perceived by many) since it provides more information about the uncertainty of the random observable as compared to the case of independence. Therefore, through a careful exploitation of such information, one may build better multiple testing procedures in terms of enhanced power. Similar observations have been made by \cite{BH2007}, \cite{BH2000} and \cite{GRW2006}. The foregoing discussion highlights the importance of taking into account the effect of correlation in deriving a multiple testing procedure when the test statistics for the individual problems are dependent. Several works have appeared in the literature where the dependence is 
factored in while deriving a multiple testing procedure. Notable references include \cite{Efron_2007}, \cite{FHG_2012}, \cite{FKC_2009}, \cite{GHS2014}, \cite{LEEK_STO_2008}, \cite{POLVDL2002},  \cite{ROM_SHK_WOLF_2008}, and \cite{YB1999}, among others. However, a final answer to this problem is yet to come and the issue remains an important and challenging open problem for researchers in this domain. One of the main goals of this paper is to suggest a new procedure which directly takes into account the dependence among individual test statistics and works under arbitrary forms of dependence.\newline

It would be worth noting, as pointed out in \cite{COHEN_SACK_2005_A} and \cite{SUN_CAI_2009}, that the major emphasis of research in multiple testing under dependence has been on finding appropriate testing procedures which could control some overall type I error rate, while questions of the decision theoretic validity (for example, admissibility) and optimality of such procedures have not been addressed adequately. \cite{COHEN_SACK_2005_A}, \cite{DSB2003} and  \cite{FS2002} strongly argued that investigating such properties is not only essential for comparing their performances but is also important to obtain deeper insights about their behaviors in such compound decision problems. A case in point is the performance of the celebrated BH method which was shown to possess several optimality properties assuming independence of test statistics. See, for instance, \cite{BCFG2011}, \cite{CHI2008}, \cite{FDR2009}, \cite{GW2002}, \cite{GR2008}, \cite{LRS2005} and \cite{NR2012} in this context. In contrast to that, \cite{COHEN_KOL_SACK_2007}, \cite{COHEN_SACK_2005_B}, \cite{COHEN_SACK_2007} and \cite{COHEN_SACK_2008} showed that, in many commonly occurring situations when test statistics are dependent, typical $p$-value based stepwise testing procedures including the BH method, are inadmissible for testing against one and two sided alternatives across a variety of loss functions. However, many of these procedures continue to have some type I error controlling property under such set up. Motivated by this we also want to study in this paper the decision theoretic aspect of multiple testing under dependence.\newline
% % {\tt Give some referenecs. Is BH optimali w.r.t. vector loss under independence ?} 

We want to study the problem of multiple hypothesis testing under dependence in the following context. Our modeling will be a Bayesian one and our theoretical investigations will be rooted in decision theory. Suppose we have a vector $\vec{X}=(X_1,\dots,X_n)$ of observations such that given the vector $\vec{\theta}=(\theta_{1},\dots,\theta_{n})$ of unknown means, $\vec{X}$ has a multivariate normal $N(\vec{\theta},\vec{\Sigma})$ distribution, where $\vec{\Sigma}$ is an $n\times n$ positive definite matrix which we assume to be known but is arbitrary. We are interested in testing $H_{0i}:\theta_i=0\mbox{ against } H_{1i}:\theta_i\neq 0$ simultaneously for $i=1,\dots,n$. We model the unknown $\theta_i$'s is through the natural two-groups mixture prior and take as the overall loss of a multiple testing rule as the additive loss which adds up the losses corresponding to each individual test (see Section 2 for the details and motivation). This loss was first suggested by \cite{LEHMANN1957a} and \cite{LEHMANN1957b} in the multiple testing literature. The optimal Bayes rule obtained under this loss rejects a null hypothesis if its posterior probability falls below a specified threshold. However, as commented in \cite{XCML_2011}, due to the complex form of the posterior probabilities under such two-groups formulation, implementation of the optimal Bayes rule would be computationally prohibitive, even if $n$ is moderate. For the same reason, investigation of the risk properties of the Bayes rule and other multiple testing procedures becomes very difficult. The present multiple testing problem was taken up in \cite{XCML_2011} where they assumed $\vec{\Sigma}$ to be a known positive definite matrix with short range dependence. As in our case, they considered a hierarchical Bayesian framework where the individual $\theta_i$'s were modeled through a two-group point mass mixture prior and proposed a multiple testing procedure for the problem. Their proposed methodology was shown to asymptotically control the marginal false discovery rate (mFDR), defined as the proportion of the expected number of false discoveries to the expected number of discoveries. Their method performs well for the short range dependence case, but is not expected to have similar performance in other forms of dependence. Hence, the scope of application of their method is too restrictive. \cite{CHEN_SARKAR_2004} proposed a Bayesian analogue of the frequentist step-down procedures. Their proposed method consists of two steps: the first step is to rank the null hypotheses according to an increasing order of their marginal Bayes factors, and the second step is to proceed in the traditional step-down manner based on a set of conditional Bayes factors. The marginal Bayes factor of each individual null hypothesis has a one-to-one correspondence with the posterior probability of that corresponding null being true. Hence, the step-down method due to \cite{CHEN_SARKAR_2004} is also computationally very demanding in this context. Moreover, the aforesaid procedure, although intuitively very appealing, does not have any formal decision theoretic justification in favor of its use. Hence, we intend to develop a Bayesian testing procedure which is intuitive and easier to implement, which can fully incorporate the correlation between the test statistics and has desirable theoretical properties from decision theoretic viewpoints. \newline

% % $\Lambda= $. We consider a Bayesian approach by introducing a set of latent variables that determines a two-component point mass mixture prior for each of the $\theta_{i}$'s. Such two groups formulation is considered a very natural way of formulating a multiple testing problem from a Bayesian viewpoint. See \cite{ETST_2001}, \cite{Efron_2004}, \cite{SCOTT_BERGER_2006}, \cite{BGT_2008}, \cite{JIN_CAI_2007}, \cite{SCOTT_BERGER_2010}, \cite{BCFG_2011} and \cite{XCML_2011} in this context. It is important to emphasize certain positive features of the proposed method Our proposed testing procedure works in a step-down manner, is computationally easier to implement and can be applied under arbitrary known covariance structure. 
% % relative to those that exist in the Bayesian literature.
% % % 
% % % It would be worth noting that popular stepwise multiple testing approaches such as the Benjamini-Hochberg method fail to meet the aforesaid convexity property and hence are inadmissible under the present set up in the sense described above. See, for instance \cite{COHEN_KOL_SACK_2007}, \cite{COHEN_SACK_2005_B}, \cite{COHEN_SACK_2005_A}, \cite{COHEN_SACK_2007} and \cite{COHEN_SACK_2008}. the BSD method can be applied to all situations when $\vec{\Sigma}$ is known as against some of the well known multiple testing procedures which are only valid for some special form of dependence, for example, positive regression dependence, among the test statistics. 
 
We propose in this article a novel Bayesian multiple testing procedure that works in a step-down manner, henceforth referred to as the Bayesian Step Down (BSD) procedure. Our procedure has several appealing features as a multiple testing procedure. {\it Firstly}, the BSD method fully utilizes the dependence between the test statistics $\vec{X}$ at every stage and is applicable under arbitrary known covariance matrices. Most of the multiple testing procedures available in the literature do not incorporate such information. {\it Secondly}, the BSD method can be applied and has desirable performance (both theoretically and in simulations) under arbitrary form of dependence when $\vec{\Sigma}$ is known. It may be recalled that some of the well known multiple testing procedures in the literature are only meaningful for some special form of dependence, for example, positive regression dependence, among the test statistics. As a matter of fact, although the underlying testing algorithm has been developed for the dependent normal means problem, it is a generic multiple testing algorithm that can be applied to non-normal models also such as the multivariate-$t$. {\it Thirdly}, and perhaps most importantly, unlike many other well known multiple testing procedures such as the widely popular BH method, use of the BSD procedure can be justified based on decision theoretic considerations. Under our assumed setup, \cite{MAT_TRU_1967} provided a certain convexity criterion which is both necessary and sufficient for a multiple testing procedure to be admissible with respect to a vector loss function, where the component-wise losses correspond to the usual $0-1$ loss function in the standard single hypothesis testing problems. \cite{COHEN_SACK_XU_2009} proposed a step-down testing procedure from a frequentist viewpoint, which they referred to as the Maximum Residual Down (MRD) method. We observe that there exists a close connection between the BSD method and the MRD method by showing a functional relationship between the BSD statistics and the corresponding MRD statistics. Exploiting this connection we are able to deduce the fact that for the present testing problem (\ref{TESTING_PROBLEM}), our proposed BSD method possesses the desirable convexity property, and is therefore admissible in the sense described above. This provides an important decision theoretic justification in favor of our proposed method. To the best of our knowledge, full-fledged optimality study of Bayesian testing procedures (other than the Bayes rule under the additive loss) within such decision theoretic framework for multiple testing under dependence is new in the literature. We emphasize here that the aforesaid admissibility property of the BSD method does not follow as a direct consequence of that of the MRD procedure proved in \cite{COHEN_SACK_XU_2009}. As will be evident later in this paper that we need a general technique invoking some novel arguments to adapt the basic architecture of the arguments in \cite{COHEN_SACK_XU_2009} in our context. This also makes proofs of some results in \cite{COHEN_SACK_XU_2009} more explicit. As a matter of fact, it follows from our arguments that any step-down multiple testing procedure based on a set of statistics which are non-decreasing functions of the absolute values of the corresponding MRD statistics is also admissible under the aforesaid vector loss function. This is a new fact that generalizes part of the results of \cite{COHEN_SACK_XU_2009}. {\it Fourthly}, our proposed methodology is easily implementable and can avoid Markov Chain Monte Carlo type computations which can often be very demanding from a computational point of view, specially when the number of tests $n$ is very large. This is explained in more detail in the next paragraph. We also investigate the performance of our proposed method through simulations based on different choices of $\vec{\Sigma}$ which cover various strong and weak correlation structures. Our simulation results provide strong numerical evidence to show that, for every choice of $\vec{\
Sigma}$ considered in our simulation study, the Bayes misclassification risk of the proposed BSD method is considerably lower together with enhanced power as compared to several existing multiple testing procedures (including the BH method) available in the literature.\newline

We also provide an important and useful representation of the proposed BSD test statistics. Using this representation at every stage of our proposed methodology, one only needs to find the inverse of a certain sub-matrix of $\vec{\Sigma}$ and subsequent computation of all the BSD statistics for the corresponding step becomes almost immediate on modern computing platforms, even if $n$ is large. This reduces the overall computational complexity of the BSD method by a large extent and helps avoiding Markov Chain Monte Carlo type computations which can be very expensive in high dimensional problems. Such a representation works for any form of the covariance matrix $\vec{\Sigma}$. This would be particularly very useful when $\vec{\Sigma}$ corresponds to an intraclass correlation and a block (clumpy) dependence matrix. In particular, for the intraclass correlation model, we do not even require inversion of any matrix and thus the BSD method can be applied for any arbitrarily large $n$. Due to the functional relationship between the BSD statistics and the MRD statistics, the aforesaid computational savings applies equally for the MRD method. This amounts to huge computational savings for the MRD method compared to its original formulation as in \cite{COHEN_SACK_XU_2009}.\newline

% % We conclude the introduction by explaining a very useful contribution of the present paper from a computational point of view. As will be made clear in the forthcoming sections that implementations of both the BSD method and the MRD method require inversion of $(n-t+1)$ many sub-matrices of $\vec{\Sigma}$ and $(n-t+1)$ many ratios of determinants at each step $t$, $t=1,\dots,n$, which may be computationally very demanding, specially, when the dimension $n$ is large. We derive an important alternative representation of the BSD test statistics because of which, at each step, one only needs to compute the inverse of exactly one sub-matrix and can avoid the computations of $(n-t+1)$ many ratios of determinants mentioned already. This leads to a huge reduction in the overall computational complexity while implementing the BSD method. Such a representation works for any form of the covariance matrix $\vec{\Sigma}$. It would be noteworthy that such alternative representation not only enables the BSD procedure to become computationally much faster compared to that in its original form, but also does the same for the MRD method as well. This would be particularly very useful when $\vec{\Sigma}$ corresponds to an intraclass correlation and a block (clumpy) dependence matrix. In particular, for the intraclass correlation model, we do not even require inversion of any matrix and thus the BSD method can be applied for any arbitrarily large $n$.\newline

The organization of this paper is as follows. Section 2 gives our prior specification and the motivation towards the development of the proposed Bayesian Step Down procedure. Section 2.1 provides the formal description of our proposed methodology. Section 3 provides various theoretical results concerning the admissibility property of the BSD method. Equivalent representation of the BSD statistics and associated results are given in Section 4. Performance of the BSD method based on simulation studies for various choices of the covariance matrix $\vec{\Sigma}$ is presented in Section 5, followed by some concluding remarks in Section 6. Proofs of all the theoretical results of this paper are presented in the Appendix.

%%%%% shows the connection between the BSD procedure and the MRD method and In such cases, it is often of interest to know whether the individual $\theta_{i}$'s significantly differs from zero or not. Therefore, one

\section{Preliminaries and the Bayesian Step Down (BSD) procedure}
%%%%%Our aim here would be to develop a multiple testing procedure which can fully take into account the dependence among the test statistics and provides a satisfactory performance based on simulated datasets. Our study will not be complete without a decision theoretic validation of such procedure.

As mentioned before, in the present paper, we consider the problem of simultaneous testing for means of a set of jointly normal variables. Recall that we assume observing the vector $\vec{X}=(X_1,\dots,X_n)$ (obtained through some suitable transformation, if necessary) such that $\vec{X}|\vec{\theta} \sim N_{n}(\vec{\theta},\vec{\Sigma})$ where  $\vec{\theta}=(\theta_1,\dots,\theta_n)$ is the vector of unknown means and $\vec{\Sigma}=((\sigma_{ij}))$ is an $n \times n$ known positive definite matrix with an arbitrary covariance structure. We are interested in testing simultaneously 
\begin{equation}\label{TESTING_PROBLEM}
  H_{0i}:\theta_{i}=0 \mbox{ against } H_{Ai}:\theta_{i}\neq 0, \mbox{ for } i=1,\dots,n.
 \end{equation}

Note that since $\vec{\Sigma}$ is known, without loss of generality, one may assume $\vec{\Sigma}$ to be the correlation matrix of the $X_i$'s so that $X_i \sim N(\theta_{i},1)$ for each $i=1,\dots,n$. This is so since if
 \begin{eqnarray}\label{DIAG_MATRIX}
 D
 &=& \begin{pmatrix} \sigma_{11} & 0 & 0 & \dots & 0 \\ 0 & \sigma_{22} & 0 & \dots & 0 \\ \vdots \\ 0 & 0 & 0 & \dots & \sigma_{nn}\end{pmatrix},
\end{eqnarray}
then letting $\vec{U}=D^{-1/2}\vec{X}$ we have $\vec{U}\sim N_{n}(\vec{\mu}, \Lambda)$, where $\vec{\mu} = D^{-1/2} \vec{\theta}$ and $\Lambda=D^{-1/2}\vec{\Sigma} D^{-1/2}$ is simply the correlation matrix of $\vec{X}$. Therefore, testing $H^{'}_{0i}:\mu_i=0\mbox{ against } H^{'}_{Ai}:\mu_i\neq 0$ simultaneously for $i=1,\dots,n$, is equivalent to the original testing problem (\ref{TESTING_PROBLEM}).\newline

For modeling the $\theta_i$'s, we adopt the same two-groups mixture framework as in \cite{XCML_2011}. Towards that, for each $i=1,\dots,n$, let us define an indicator variable $\nu_i$ which takes the value 1 if and only if $H_{Ai}$ is true and 0 otherwise. Here $\nu_1,\dots,\nu_n$ are unobservable. It is assumed that $\nu_i \stackrel{i. i. d.}{\sim}$ Bernoulli$(p)$ for some $p \in (0, 1)$. The parameter $p$ is interpreted as the theoretical proportion of true alternatives. Given $\nu_i=0$, $\theta_{i}$ is assumed to follow the distribution $\delta_{\{0\}}$ degenerate at the point $0$, while given $\nu_i=1$ it is assumed to have an absolutely continuous distribution with density $g(\cdot)$ over $\mathbb{R}$. Thus $\theta_{i}$'s are modeled as independent and identically distributed observations from the following two-groups prior distribution:
  \begin{equation}\label{TWO_GRP_THETA_PRIOR}
  \theta_{i} \stackrel{i. i. d.}{\sim} (1-p)\cdot\delta_{\{0\}} + p\cdot g(\theta), \mbox{ for } i=1,\dots,n.
  \end{equation}
  
The corresponding common marginal distribution of $X_i$'s has the following density:
   \begin{equation}\label{TWO_GRP_MARGINAL_Xi}
  X_i \sim (1-p)\cdot f_0(x) + p\cdot f_1(x), \mbox{ for } i=1,\dots,n,
  \end{equation} 
 where $f_0 =\phi$ and $f_1(x)=\int_{\mathbb{R}}\phi(x-\theta)g(\theta)d\theta$ is the convolution of $g(\cdot)$ with the standard normal probability density function $\phi(\cdot)$. We choose $g$ as a univariate normal density with location zero and a large variance $V$. The large variance is taken to facilitate efficient detection of the non-null $\theta_i$'s. The corresponding joint conditional distribution of $\vec{X}=(X_1,\dots,X_n)$ given $\vec{\nu}=(\nu_1,\dots,\nu_n)$ is then given by
   \begin{equation}\label{CONDITIONAL_DISTRIBUTION_X}
    \vec{X}|\vec{\nu} \sim  N_{n}(0,\vec{\Sigma}+VB_{\vec{\nu}})
  \end{equation}
  and the marginal joint distribution of $ \vec{X}=(X_1,\dots,X_n)$ is given by,
  \begin{equation}\label{MARGINAL_DISTRIBUTION_X}
    \vec{X} \sim \sum_{\vec{\nu} \in \{0,1\}^{n} }\pi(\vec{\nu}) N_{n}(0,\vec{\Sigma}+VB_{\vec{\nu}})
  \end{equation}
  where $\pi(\vec{\nu})= \prod_{i=1}^{n} p^{\nu_i}(1-p)^{1-\nu_i} $ denotes the joint prior distribution of $(\nu_1,\dots,\nu_n)$ and $B_{\vec{\nu}}$ is a diagonal matrix with diagonal elements $\nu_1,\dots,\nu_n$ respectively.\newline 

It is easy to see that our multiple testing problem (\ref{TESTING_PROBLEM}) is equivalent to test the following $n$ hypotheses simultaneously:
  \begin{equation}\label{TESTING_PROBLEM_EQUIV}
   H_{0i}:\nu_i = 0 \mbox{ against } H_{Ai}:\nu_i = 1, \mbox{ for } i=1,\dots,n.
  \end{equation}

As mentioned earlier, one of our main emphasis in this work would be a study of the problem from a decision theoretic point of view. Suppose we define the loss of a multiple testing procedure as an additive one which adds up the losses made by the induced individual testing rules. At the individual testing level, suppose we define the loss to be zero if a correct decision is made and take it to be equal to 1 if a type I or type II has been committed. The theoretical optimal solution to the above multiple testing problem (\ref{TESTING_PROBLEM_EQUIV}) would be the Bayes rule with respect to the two-groups prior (\ref{TWO_GRP_THETA_PRIOR}), given by,
\begin{equation}\label{OPTIMAL_BAYES_RULE}
\nu_i^{*}=I\{\pi(\nu_i=1|\vec{x}) > \delta\},\mbox{ for } i=1,\dots,n, 
\end{equation}
for some appropriate thresholding constant $\delta>0$, where $I\{A\}$ denotes the indicator function of an event $A$. In (\ref{OPTIMAL_BAYES_RULE}) above, $\pi(\nu_i=1|\vec{x})$ denotes the posterior probability of the $i$-th alternative hypothesis being true, where by definition
 \begin{eqnarray}\label{POST_INCL_PROB}
   \pi(\nu_i=1|\vec{x})&=& \frac{\sum_{\vec{\nu}\in \{0,1\}^{n}:\nu_i=1} \pi(\vec{\nu})f(\vec{x}|\vec{\nu})}{\sum_{\vec{\nu}\in\{0,1\}^{n}}\pi(\vec{\nu})f(\vec{x}|\vec{\nu})}\cdot
 \end{eqnarray}
% % % % %  implement the optimal Bayes decision (\ref{OPTIMAL_BAYES_RULE}), and as also evident from the expression in (\ref{POST_INCL_PROB}) For the present multiple testing problem (\ref{TESTING_PROBLEM_EQUIV}), based on using a set of statistics each of which is a function of the individual data points $X_{i}$'s and was and used a set of statistics each of which is a function of the individual data points $X_{i}$'s. 

In (\ref{POST_INCL_PROB}) above, $f(\vec{x}|\vec{\nu})$ denotes the conditional density of $\vec{X}$ given $\vec{\nu}=(\nu_1,\dots,\nu_n)$ evaluated at the point $\vec{x}$. As commented in \cite{XCML_2011}, in order to compute $\pi(\nu_i=1|\vec{X})$ one needs to sum over $2^{n}$ many terms and this would be true for each $i=1,\dots,n$. As a result, the overall computational complexity of the optimal Bayes solution (\ref{OPTIMAL_BAYES_RULE}) will be of the order of $O(n2^{n})$, which would be computationally very expensive even if the conditional density $f(\vec{x}|\vec{\nu})$ is completely specified and the number of hypotheses $n$ is moderately large. It should be noted further in this context that, because of the same reason, derivation of the closed form expression of the optimal Bayes risk (even asymptotically) and investigation of the risk properties of other multiple testing procedures (both theoretically and numerically) as compared to this optimal risk, become practically impossible under a general covariance structure. So studying the exact optimal rule corresponding to the additive loss function is beyond our scope in this scenario. But as we will see later, this loss function (giving the overall misclassification rate) will be used in our simulations for evaluating competing procedures. \newline

As mentioned earlier, to get around the difficulty, \cite{XCML_2011} proposed a step-up method as an alternative Bayesian testing procedure where they assumed $\vec{\Sigma}$ to be a known positive definite matrix with a short range dependence covariance structure and derived some optimal properties of their procedure in terms of asymptotic control of mFDR. But the scope of application of their method is limited by the fact that it is not expected to perform well under general dependence. In a different context, when observations are independent, \cite{CHEN_SARKAR_2004} proposed a novel Bayesian step-down testing procedure based on a set of conditional Bayes factors. According to their proposal, one first considers a set of marginal Bayes factors $B_{1},\dots,B_{n}$, where each $B_{i}$ is defined as the ratio of marginal posterior odds to the marginal prior odds of $H_{0i}$. Note that, for each $i$, $B_{i}$ provides a measure of evidence in favor of $H_{0i}$ that is contained in the data $\vec{x}$. Based on the increasing ordering of these marginal Bayes factors $B_{i}$'s, \cite{CHEN_SARKAR_2004} then considered a family of $(n+1)$ most plausible configurations of true and false null hypotheses. For each such configuration, they defined a conditional Bayes factor which acts as an {\it a posteriori} measure of evidence for that particular configuration compared to the rest of the members in the aforesaid family. \cite{CHEN_SARKAR_2004} referred to these conditional Bayes factors as the stepwise Bayes factors and proposed a Bayesian testing algorithm that works in a step-down fashion by comparing the stepwise Bayes factors with a predetermined threshold $c>0$. They considered such a step-down testing procedure to be a natural Bayesian analogue to the frequentist step-down procedures. It should be noted that, although their proposed step-down testing procedure was originally developed and implemented under the assumption of independence, it is also philosophically applicable in more general contexts, such as the present multiple testing problem (\ref{TESTING_PROBLEM_EQUIV}). Their method, however, critically hinges upon enumeration of the marginal Bayes factors $B_{i}$'s and the subsequent ordering of the null hypotheses based on them. Observe that, in our context, $B_{i}=\frac{p}{1-p}\cdot\frac{\pi(\nu_{i}=0|\vec{x})}{1-\pi(\nu_{i}=0|\vec{x})}$ for $i=1,\dots,n$. Here, each $B_{i}$ has an one-to-one correspondence to the posterior probability $\pi(\nu_i=1|\vec{x})=1-\pi(\nu_i=0|\vec{x})$. Thus, implementation of the Bayesian step-down procedure due to \cite{CHEN_SARKAR_2004} faces the same computational difficulty as the optimal Bayes rule (\ref{OPTIMAL_BAYES_RULE}) for the present multiple testing problem.\newline

Given this background, we now motivate the development our proposed step-down testing procedure, henceforth referred to as the Bayesian Step Down (BSD) procedure. The formal description of this procedure is given in Section 3.1. We emphasize that we now have a moderate goal of coming up with a procedure which performs well under arbitrary dependence vis-a-vis the procedures available in the literature with respect to natural losses and also has attractive decision theoretic properties (to be explained in the next section). Our route will be Bayesian, although we will not be using formal optimal Bayes rules for reasons explained before. It is worth noting that in the frequentist literature, a step-down procedure starts by determining whether the null hypothesis corresponding to the most significant test statistic can be rejected or not. Thus, in a step-down method, we try to answer the following question at the first step: ``Can at least one null hypothesis be rejected?'' which is equivalent to asking the question ``Can the global null hypothesis be true?''. A natural Bayesian approach to answer this question is as follows. To compare the global null hypothesis, we confine our attention to the sub-space $\{(\nu_1,\dots,\nu_n)\in \{0,1\}^{n}:\sum_{i=1}^{n}\nu_i=1\}$ of the original model space $\{0,1\}^{n}$, as the class of plausible alternatives to the global null. That is, we are considering only those models as plausible alternatives to the global null hypothesis, each of which consists of $(n-1)$ many true null and exactly one false null hypothesis. For each of these models in this restricted sub-space, we enumerate the ratio of the posterior probability of an alternative model being true to that of the global null hypothesis. If the maximum of these ratios exceeds some pre-specified threshold, say, $\delta > 0$, we conclude that there is little reason to believe the global null hypothesis to be true in light of the data. Suppose the maximum occurs for the odds ratio corresponding to the alternative $(\nu_1=0,\dots,\nu_{i-1}=0,\nu_i=1,\nu_{i+1}=0,\dots,\nu_n=0)$, for $1 \leq i \leq n$. Then we reject $H_{0i}$ and leave aside the corresponding $x_i$ for further analysis, and move on to the next stage. Otherwise, we accept all the null hypotheses and hence the global null, and stop. We continue in this fashion till an acceptance occurs or we exhaust considering all the null hypotheses.\newline

% % % Hence, in such a situation, we reject the global null hypothesis along with the null hypothesis corresponding to the coordinate at which the aforesaid maximum occurs (since it provides the maximum evidence against the global null) and leave aside the corresponding $x_i$ for further analysis, and move on to the next stage. Otherwise, we accept all of them and stop. 

We now formally describe the proposed Bayesian step-down procedure as follows. For that, we adopt here similar convention of notations used in \cite{COHEN_SACK_XU_2009}. Let $\vec{X}^{(i_{1},\dots,i_{t})}$ be an $(n-t)\times1$ vector consisting of those components of $\vec{X}=(X_1,\dots,X_n)$ with $(X_{i_{1}},\dots,X_{i_{t}})$ left out. Suppose $\vec{\Sigma}_{(i_{1},\dots,i_{t})} $ is the $(n-t)\times(n-t)$ sub-matrix obtained after eliminating the $i_{1},\dots,i_{t}$-th rows and the corresponding columns of $\vec{\Sigma}$. Let $\vec{\sigma}_{(j)}^{(i_{1},\dots,i_{t})} $ be the $(n-t-1)\times1$ vector obtained by eliminating the $i_{1},\dots,i_{t}$-th and $j$-th elements of the $j$-th column vector of $\vec{\Sigma}$. Further suppose that
 \begin{eqnarray}
 \sigma_{j\cdot(i_{1},\dots,i_{t})} &=& \sigma_{jj} - {\vec{\sigma}_{(j)}^{(i_{1},\dots,i_{t})}}^{T}{\vec{\Sigma}^{-1}_{(i_1,\dots,i_t,j)}}{\vec{\sigma}_{(j)}^{(i_{1},\dots,i_{t})}}. \nonumber
 \end{eqnarray}
%  is the conditional variance of $X_j$ given all variables except $X_{i_1},\dots,X_{i_t}$ and $X_j.$\\
 
 Let us define
 \begin{eqnarray}\label{BSD_STAT_DEFN}
 S_{tj}^{(i_1,\dots,i_{t-1})}(\vec{X}) &=& \frac{\pi(\nu_j=1,\vec{\nu}^{(i_1,\dots,i_{t-1}, j)}=\vec{0}|\vec{X}^{(i_1,\dots,i_{t-1})})}{\pi(\nu_j=0,\vec{\nu}^{(i_1,\dots,i_{t-1},j)}=\vec{0}|\vec{X}^{(i_1,\dots,i_{t-1})})}
 \end{eqnarray}
for $t,j=1,\dots,n,$ $1\leqslant i_{1}\neq\dots\neq i_{t-1}\leqslant n$ and $i_{l} \neq j$ for all $l=1,\dots,t-1$.\newline

Note that, for each fixed $t$, the numerator of the statistics $S_{tj}^{(i_1,\dots,i_{t-1})}(\vec{X})$ is nothing but the posterior probability of the $j$-th plausible alternative within the restricted subspace $\{\vec{\nu}^{(i_{1},\dots,i_{t-1})}\in \{0,1\}^{n-t+1}:\sum_{i}\nu_i=1\}$, while the denominator is the posterior probability of the corresponding global null hypothesis. Thus, for each $t$, the statistics $S_{tj}^{(i_1,\dots,i_{t-1})}(\vec{X})$'s are simply the ratios of posterior probabilities of the plausible alternatives within the aforesaid restricted subspace to that of the corresponding global null hypothesis at the $t$-th stage.\newline
%%%%%$\{\vec{\nu}^{(i_{1},\dots,i_{t-1})}\in \{0,1\}^{n-t+1}:\sum_{i(\neq i_{l} \mbox{ for }l=1,\dots,t-1)=1}^{n}\nu_i=1\}$, the null hypotheses
 
For $t=1,\dots,n$, let us define the indices $j_t(\vec{X})$ as,
 \begin{align}\label{BSD_INDEX}
  j_t(\vec{X})&=\argmax_{j \in \{1,\dots,n\}\setminus\{j_1(\vec{X}),\dots,j_{t-1}(\vec{X})\}} S^{(j_1(\vec{X}),\dots,j_{t-1}(\vec{X}))}_{tj}(\vec{X}).
 \end{align}

 \subsection{The Proposed Procedure}
 
 Given a predetermined threshold $\delta > 0$, the proposed Bayesian Step Down (BSD) procedure is now described below.\newline
 
 \begin{enumerate}
 \item At stage 1, consider the statistics $S_{1j}(\vec{X})$, where $j\in\{1,\dots,n\}$. If $S_{1j_1(\vec{X})}(\vec{X})\leqslant \delta$, stop and accept all the $H_{0i}$'s. Otherwise, reject $H_{0j_1(\vec{X})}$ and continue to stage 2.\newline
 
 \item At stage 2, consider the statistics $S^{(j_1(\vec{X}))}_{2j}(\vec{X})$, where $j\in \{1,\dots,n\}\setminus\{j_1(\vec{X}) \}$. If $S^{(j_1(\vec{X}))}_{2j_2(\vec{X})}(\vec{X})\leqslant\delta$, stop and accept all the remaining $H_{0i}$'s. Otherwise, reject $H_{0j_2(\vec{X})}$ and continue to stage 3.\newline
 
 \item In general, at stage $t$, consider the $(n-t+1)$ many statistics $S^{(j_1(\vec{X}),\dots,j_{t-1}(\vec{X}))}_{tj}(\vec{X})$, where $j \in \{1,\dots,n\}\setminus\{j_1(\vec{X}),\dots,j_{t-1}(\vec{X})\}$. If $S^{(j_1(\vec{X}),\dots,j_{t-1}(\vec{X}))}_{tj_t(\vec{X})}(\vec{X})\leqslant\delta$, stop and accept all the remaining $H_{0i}$'s. Otherwise, reject $H_{0j_t(\vec{X})}$ and move to stage $(t+1)$.\newline
 
  \item We continue in this way till an acceptance occurs or we are exhausted with all the null hypotheses (that is $t=n$), in which case we must stop.
\end{enumerate}
 
 Here the subscript $t$ denotes the stage of the BSD procedure. The above description defines a class of Bayesian testing procedures for various choices of the thresholding constant $\delta>0$. It should be noted that, at each step $t$, the statistics $S_{tj}^{(i_1,\dots,i_{t-1})}(\vec{X})$'s directly incorporate the dependence among the $X_i$'s by using either the covariance matrix $\vec{\Sigma}$ or its various sub-matrices of appropriate orders which appear in the posterior probabilities through the corresponding likelihoods. It should further be noted that, although we remove the data points $X_{i_1},\dots,X_{i_{t-1}}$ before the $t$-th stage is reached, each of the statistics $S_{tj}^{(i_1,\dots,i_{t-1})}(\vec{X})$'s at the $t$-th stage implicitly depends on the observations $X_{i_1},\dots,X_{i_{t-1}}$ through the indices $j_1(\vec{X}),\dots,j_{t-1}(\vec{X})$ for which the corresponding null hypotheses have already been rejected up to the $(t-1)$-th stage. Thus, the decision taken at each and every step has an effect on the decisions taken in the following steps, and all the data points are being used simultaneously at every step. In this fashion, we are incorporating the dependence among the test statistics in a more fruitful manner compared to the traditional multiple testing procedures available in the existing literature. \newline
% % %  In this study, we will work with the choice $\delta=1$. {\tt Is this very important to keep $\delta$=1 ?} 
    
 Observe that for $d\in \{0,1\}$,
 \begin{eqnarray}
  \pi\big(\nu_j=d,\vec{\nu}^{(i_1,\dots,i_{t-1}, j)}=\vec{0}|\vec{X}^{(i_1,\dots,i_{t-1})}\big)
  &=&\pi\big(\nu_j=d|\vec{\nu}^{(i_1,\dots,i_{t-1},j)}=\vec{0},\vec{X}^{(i_1,\dots,i_{t-1})}\big)\nonumber\\
  && \mbox{ }\times \mbox{ } \pi\big(\vec{\nu}^{(i_1,\dots,i_{t-1}, j)}=\vec{0}|\vec{X}^{(i_1,\dots,i_{t-1})}\big).\nonumber
 \end{eqnarray}

 Thus, one may rewrite the statistics $S_{tj}^{(i_1,\dots,i_{t-1})}(\vec{X})$ as
 \begin{eqnarray}
 S_{tj}^{(i_1,\dots,i_{t-1})}(\vec{X}) &=& \frac{\pi(\nu_j=1|\vec{\nu}^{(i_1,\dots,i_{t-1}, j)}=\vec{0},\vec{X}^{(i_1,\dots,i_{t-1})})}{\pi(\nu_j=0|\vec{\nu}^{(i_1,\dots,i_{t-1}, j)}=\vec{0},\vec{X}^{(i_1,\dots,i_{t-1})})}\nonumber
 \end{eqnarray}
 for $t,j=1,\dots,n,$ $ 1 \leqslant i_{1} \neq \dots \neq i_{t-1} \leqslant n$ and $i_{l} \neq j $ for all $l=1,\dots,t-1$. Hence, the BSD test statistics at stage $t$ may also be interpreted as the ratios of posterior probabilities of $H_{Aj}$ being true and $H_{0j}$ being true, provided that the rest of the null hypotheses which have not been rejected before the $t$-th stage are true. 
   
\section{Admissibility property of the BSD Procedure}
 In this section, we provide an important decision theoretic justification in favor of the use of the BSD procedure from a frequentist viewpoint in our setup.  In particular, we show that the proposed method based on the statistics $S_{tj}$'s in (\ref{BSD_STAT_DEFN}), will be admissible in a sense to be made precise shortly. Recall here that, any multiple testing procedure $\Phi(\vec{x})=(\phi_1(\vec{x}),\dots,\phi_n(\vec{x}))$ induces an individual test function $\phi_j(\vec{x})$ for testing $H_{0j}$ against $H_{Aj}$, where $\phi_j(\vec{x})$ denotes the probability of rejecting the $j$-th null hypothesis $H_{0j}$ when the data point $\vec{X}=\vec{x}$ is observed. We consider the standard $0-1$ loss function corresponding to $\phi_j$ which is given by
 \begin{equation}\label{INDIVIDUAL_LOSS}
  L_j\big(\phi_j(\vec{X}),\vec{\theta}\big)=I\{\theta_j=0\}\phi_j(\vec{X})+I\{\theta_j\neq0\}(1-\phi_j(\vec{X})\big),
 \end{equation}
while the corresponding risk function will be given as
 \begin{equation}\label{INDIVIDUAL_RISK}
  R_j\big(\phi_j,\vec{\theta}\big)=I\{\theta_j=0\}E_{\vec{\theta}:\theta_j=0}\big(\phi_j(\vec{X})\big) + I\{\theta_j\neq0\}E_{\vec{\theta}:\theta_j\neq0} \big(1-\phi_j(\vec{X})\big).\nonumber
 \end{equation}

 We consider the loss function for the procedure $\Phi(\vec{X})$ to be defined as the vector loss function
 \begin{equation}\label{VECTOR_LOSS}
  L\big(\Phi(\vec{X}),\vec{\theta}\big)=(L_1\big(\phi_1(\vec{X}),\vec{\theta}\big),\dots,L_n\big(\phi_n(\vec{X}),\vec{\theta}\big)),
 \end{equation}
 while the corresponding risk function is defined as the vector of the individual risk functions and is given by
 \begin{equation}\label{VECTOR_RISK}
  R\big(\Phi,\vec{\theta}\big)=(R_1\big(\phi_1,\vec{\theta}\big),\dots,R_n\big(\phi_n,\vec{\theta}\big)).\nonumber
 \end{equation}

 A multiple testing procedure $\Phi(\vec{X})$ is said to be inadmissible with respect to the vector loss function (\ref{VECTOR_LOSS}) if there exists another multiple testing procedure $\Phi^{*}(\vec{X})$ such that $R_j\big(\phi^{*}_j,\vec{\theta}\big) \leq R_j\big(\phi_j,\vec{\theta}\big)$ for all $j=1,\dots,n$, and all $\vec{\theta} \in \mathbf{R}^n$, with strict inequality holding for at least one $j$ and some $\vec{\theta}\in \mathbf{R}^n$. A multiple testing procedure will be admissible if it is not inadmissible in the aforesaid sense. It is natural that a multiple testing procedure which is inadmissible with respect to the vector loss function (\ref{VECTOR_LOSS}) becomes inadmissible whenever the loss is a non-decreasing function of the number of type I and type II errors. It would be worth recalling in this context that, \cite{COHEN_KOL_SACK_2007}, \cite{COHEN_SACK_2005_B}, \cite{COHEN_SACK_2007} and \cite{COHEN_SACK_2008} showed that in many common applications when test statistics are dependent, typical $p$-value based stepwise testing procedures including the celebrated BH method are inadmissible with respect to the vector loss function (\ref{VECTOR_LOSS}). Consequently, such stepwise testing procedures also become inadmissible whenever the risk is a non-decreasing function of the expected number of type I and type II errors, for example, when the risk is the expected number of misclassified hypotheses. This shows a very unpleasant feature of the traditional stepwise testing procedures including the popular BH method for multiple testing under dependence. However, as we shall see later in this section that, unlike such frequentist stepwise testing procedures, our proposed BSD method is not inadmissible in the sense described above.
 
\subsection{Connection to the MRD method}
  Before we proceed further, we now establish an important connection between our proposed BSD procedure and the Maximum Residual Down (MRD) method, introduced by \cite{COHEN_SACK_XU_2009}. In particular, we show that there exists a functional relationship between the proposed BSD statistics with those of the MRD statistics. This result would be essential for showing that our proposed multiple testing procedure based on the BSD statistics will be admissible for the present testing problem. The MRD method due to \cite{COHEN_SACK_XU_2009} is based on a set of adaptively formed residuals defined as
 \begin{eqnarray}\label{MRD_STATISTICS}
 U_{tj}^{(i_1,\dots,i_{t-1})}(\vec{X}) &=& \big\{X_j - {\sigma_{(j)}^{(i_1,\dots,i_{t-1})}}^{T}{\vec{\Sigma}^{-1}_{(i_1,\dots,i_{t-1},j)}}{\vec{X}^{(i_1,\dots,i_{t-1},j)}}\big\}/ \sigma^{\frac{1}{2}}_{j\cdot(i_1,\dots,i_{t-1})},\nonumber
 \end{eqnarray}
 for $t,j=1,\dots,n$, $1\leqslant i_{1}\neq\dots\neq i_{t-1}\leqslant n$ and $i_{l}\neq j$ for all $l=1,\dots,t-1$.\newline
 
 For $1 \leqslant t \leqslant n$, we define the index $j^{\prime}_t(\vec{X})$ as
  \begin{align}\label{MRD_INDEX}
   j^{\prime}_t(\vec{X})&=\argmax_{j \in \{1,\dots,n\}\setminus\{j^{\prime}_1(\vec{X}),\dots,j^{\prime}_{t-1}(\vec{X})\}} |U^{(j^{\prime}_1(\vec{X}),\dots,j^{\prime}_{t-1}(\vec{X}))}_{tj}(\vec{X})|.
  \end{align}
 
 Given a set of positive constants $C_1 \geqslant C_2 \geqslant \dots \geqslant C_n$, the MRD method works in a step-down manner as follows:
 \begin{enumerate}
   \item At stage 1, consider the statistics $|U_{1j}(\vec{X})|$, where $j \in \{1,\dots,n\}$. If $|U_{1j^{\prime}_1(\vec{X})}(\vec{X})| \leqslant C_1, $ stop and accept all $H_{0i}$'s. Otherwise reject $H_{0j^{\prime}_1(\vec{X})}$ and continue to stage 2.\newline
 
   \item At stage 2, consider the statistics $|U^{(j^{\prime}_1(\vec{X}))}_{2j^{\prime}_1(\vec{X})}(\vec{X})|$, where $j\in\{1,\dots,n \}\setminus\{j^{\prime}_1(\vec{X})\}$. If $|U^{(j^{\prime}_1(\vec{X}))}_{2j^{\prime}_2(\vec{X})}(\vec{X})| \leqslant C_2, $ stop and accept all the remaining $H_{0i}$'s. Otherwise, reject $H_{0j^{\prime}_2(\vec{X})}$ and continue to stage 3.\newline
 
  \item In general, at stage $t$, consider the statistics $|U^{(j^{\prime}_1(\vec{X}),\dots,j^{\prime}_{t-1}(\vec{X}))}_{tj}(\vec{X})|$, where $j\in\{1,\dots,n\}\setminus\{j^{\prime}_1(\vec{X}),\dots,j^{\prime}_{t-1}(\vec{X})\}$. If $|U^{(j^{\prime}_1(\vec{X}),\dots,j^{\prime}_{t-1}(\vec{X}))}_{tj^{\prime}_t(\vec{X})}(\vec{X})| \leqslant C_t$, stop and accept all the remaining $H_{0i}$'s. Otherwise, reject $H_{0j^{\prime}_t(\vec{X})}$ and move to stage $(t+1)$.\newline
 
  \item We continue in this fashion until an acceptance occurs or there are no more null hypothesis to be tested, in which case we must stop.\newline
\end{enumerate}
 
 \begin{remark}\quad
 \begin{enumerate}
  \item Note that the indices $j_t(\vec{X}) $ and $j^{\prime}_t(\vec{X}) $ defined in (\ref{BSD_INDEX}) and (\ref{MRD_INDEX}), respectively, need not necessarily be the same.\newline

  \item The MRD procedure depends on a set of decreasing sequence of critical constants $C_1 \geqslant \dots \geqslant C_n > 0,$ choice of which are somewhat ad hoc and vary with $\vec{\Sigma}$. Performance of the MRD procedure therefore critically depends on the appropriate choice of $C_1 \geqslant \dots \geqslant C_n$, and utmost care needs to be taken while deciding over the choice of these $C_i$'s. On the other hand, in the Bayesian model selection literature, a standard practice is to compare the ratio of the posterior probabilities of two competing models with the threshold $1$ to select the model having larger posterior probability. Therefore, in applications, one may choose the thresholding constant $\delta$ used in our definition of the BSD method (see Section 5.2) to be equal to $1$ which leads to an automatic default choice of $\delta$ that works for any arbitrary $\vec{\Sigma}$.\newline
% % % It can be shown that rejection of a hypothesis under the BSD method is possible only when the hypothesis is also rejected by the MRD test criterion used with a certain sequence $C_1(\vec{X}) \geqslant \dots \geqslant C_n(\vec{X}) > 0$ of data dependent critical constants.
 \end{enumerate}
  \end{remark}

  Theorem \ref{THM_BSD_MRD_CONNECTION} presented below characterizes the relationship between the proposed BSD method and the MRD method due to \cite{COHEN_SACK_XU_2009}.\newline
 
  \begin{thm}\label{THM_BSD_MRD_CONNECTION}
   Under the present set-up, the BSD statistics and the MRD statistics are associated through the following functional relationship:
   \begin{eqnarray}\label{BSD_MRD_RELATION}
    S_{tj}^{(i_1,\dots,i_{t-1})}(\vec{X}) 
    %  \frac{f(\vec{X}^{(i_1,\dots,i_{t-1})}|\nu_j=1,\vec{\nu}^{(i_1,\dots,i_{t-1}, j)}=\vec{0})}{f(\vec{X}^{(i_1,\dots,i_{t-1})}|\nu_j=0,\vec{\nu}^{(i_1,\dots,i_{t-1}, j)}=\vec{0})}
    &=& \frac{p}{1-p} \times \sqrt{\frac{\sigma_{j \cdot (i_1,\dots,i_{t-1})}}{V+\sigma_{j \cdot (i_1,\dots,i_{t-1})}}}\nonumber\\
    && \times\exp\bigg\{\frac{V}{2(V+\sigma_{j \cdot (i_1,\dots,i_{t-1})})}\{U_{tj}^{(i_1,\dots,i_{t-1})}(\vec{X})\}^2\bigg\}.
 \end{eqnarray}
  \end{thm} 
  
  \begin{proof}
   See Appendix.
  \end{proof}

 \subsection{Admissibility of the BSD Procedure}
 In this subsection, we show that the proposed BSD method is admissible when $\vec{X}$ is assumed to follow a multivariate normal distribution with a fixed, but unknown mean vector $\vec{\theta}$ and an arbitrary known positive definite covariance matrix $\vec{\Sigma}$. It is easy to see based on contrapositive arguments that a multiple testing procedure with respect to the vector loss function (\ref{VECTOR_LOSS}) is admissible if and only if each of the induced test procedures for the corresponding individual testing problem is admissible with respect to the standard $0-1$ loss (\ref{INDIVIDUAL_LOSS}). Hence, in order to show the admissibility property of our proposed methodology, it would suffice to establish that the corresponding induced decision for testing $H_{01}$ versus $H_{A1}$ is admissible with respect to the $0-1$ loss (\ref{INDIVIDUAL_LOSS}) under the assumed set up. As in \cite{COHEN_SACK_XU_2009}, we shall use a result due to \cite{MAT_TRU_1967} which gives a necessary and sufficient condition for the admissibility of a test of $H_{01}$ versus $H_{A1}$ when the joint distribution of $\vec{X}$ belongs to an exponential family. We emphasize in this context that although the BSD test statistics can be expressed as functions of the corresponding MRD statistics, the admissibility property of the BSD method does not follow as a direct consequence of that of the MRD procedure. It should be carefully noted that for each $t$, the functional relationship between the proposed BSD statistics and the corresponding MRD statistics in (\ref{BSD_MRD_RELATION}) involves the terms $\sigma_{j\cdot( j_1(\vec{x}),\dots,j_{t-1}(\vec{x}))}$ which depend on the set of indices $j_1(\vec{x}),\dots,j_{t-1}(\vec{x})$. Each of these indices $j_1(\vec{x}),\dots,j_{t-1}(\vec{x})$ is a function of the observed vector $\vec{x}$ and they indicate the null hypotheses those have already been rejected before the $t$-th stage. It therefore becomes necessary to understand certain behavior of these terms $\sigma_{j\cdot( j_1(\vec{x}),\dots,j_{t-1}(\vec{x}))}$ as a function of $\vec{x}$ in the decision making process when the data $\vec{x}$ is observed. Such behavior would be extremely crucial for proving the admissibility property of the proposed BSD method which will be made more precise later in this section. Moreover, our general scheme of arguments makes some of the arguments employed in the proof of the admissibility property of the MRD procedure as in \cite{COHEN_SACK_XU_2009} more explicit. In this process, we establish a more general mathematical fact which says that for the multiple testing problem (\ref{TESTING_PROBLEM}), any step-down multiple testing procedure based on a set of statistics $S_{tj}$'s such that each $S_{tj}$ is a non-decreasing function of the absolute value of the corresponding MRD statistics $U_{tj}$, is admissible with respect to the vector loss function (\ref{VECTOR_LOSS}). To the best of our knowledge this fact is hitherto unknown in the literature and extends part of the results of \cite{COHEN_SACK_XU_2009} in this context.\newline
%  This would be made more explicit in Lemma \ref{LEM_EQUALITY_OF_INDICES} where we found conditions under which these set of indices $i_1,\dots,i_{t-1}$ would become invariant as a function of $\vec{x}$.

 Let $\phi_j(\vec{x})$ denotes the test function induced by the BSD procedure for testing $ H_{0j}$ vs $H_{Aj}$ when we observe the data point $\vec{x}$. The following lemma, namely, Lemma \ref{LEM_MATH_TRUAX} is due to \cite{MAT_TRU_1967} which provides a necessary and sufficient condition for the admissibility of a testing procedure for testing $H_{01}$ versus $H_{A1}$ when $\vec{\Sigma}$ is known.\newline

 Let $\vec{Y} = \vec{\Sigma}^{-1}\vec{X}$.\newline

 \begin{lem}\label{LEM_MATH_TRUAX}
 A necessary and sufficient condition for a test $\phi(\vec{y})$ of $H_{01}$ versus $H_{A1}$ to be admissible is that, for almost every fixed $y_2, \dots,y_n$, the acceptance region of the test is an interval in $y_1$.
\end{lem}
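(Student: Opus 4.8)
The statement is the classical characterisation of admissible tests for a one-dimensional natural parameter in a full exponential family (due to \citet{MAT_TRU_1967}), and the plan is to recall its proof in the present notation. The first step is to put the model into exponential-family form: absorbing the quadratic carrier $\exp(-\tfrac12\vec x^{T}\Sigma^{-1}\vec x)$, the family $\{N_m(\vec\mu,\Sigma):\vec\mu\in\mathbb{R}^m\}$ has density proportional to $h(\vec x)\exp(\vec\mu^{T}\Sigma^{-1}\vec x)$, so in terms of $\vec Y=\Sigma^{-1}\vec X$ it is proportional to $\tilde h(\vec y)\exp(\vec\mu^{T}\vec y)$ --- a full-rank exponential family with natural parameter $\vec\mu$ and natural statistic $\vec y$, in which $H_{01}$ vs $H_{A1}$ is the problem of testing the single coordinate $\mu_1=0$ against $\mu_1\neq0$ with $\mu_2,\dots,\mu_m$ as nuisance parameters. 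Two structural facts are then recorded: (i) given $T:=(Y_2,\dots,Y_m)=t$, the conditional law of $Y_1$ is the one-parameter exponential family proportional to $e^{\mu_1 y_1}\,dm_t(y_1)$, which is free of the nuisance parameters; and (ii) under $H_{01}$ the statistic $T$ is complete and sufficient for $(\mu_2,\dots,\mu_m)$, while for each fixed $\mu_1$ the marginal family of $T$ is a complete $(m-1)$-parameter exponential family.

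For necessity (admissibility $\Rightarrow$ interval in $y_1$), I would invoke the complete-class theorem for this exponential family: an admissible test has the risk function of an a.e.\ limit of Bayes tests against priors $\Lambda_0$ on $\{\mu_1=0\}$ and $\Lambda_1$ on $\{\mu_1\neq0\}$, and such a test accepts $H_{01}$ on the set $\{\vec y:\int e^{\vec\mu^{T}\vec y}\,d\Lambda_1\le k\int e^{\vec\mu^{T}\vec y}\,d\Lambda_0\}$. Fixing $T=t$, the right-hand side is constant in $y_1$ because $\mu_1\equiv0$ on the support of $\Lambda_0$, whereas the left-hand side is $\int e^{\mu_1 y_1}\,d\tilde\Lambda_{1,t}(\mu_1)$, a convex function of $y_1$; since a convex function has interval sublevel sets, the acceptance region is an interval in $y_1$ for every $t$, and this is preserved under a.e.\ limits. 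The completeness statements in (ii) then force $\phi$ to agree a.e.\ with this limiting test (equal size functions give equal conditional sizes; equality of power functions at each $\mu_1\neq0$ gives equality of conditional power functions; completeness of the one-parameter family identifies the conditional tests), which yields the interval property. This direction can alternatively be carried out directly: if the section were not an interval on a positive-measure set of $t$, replacing it by the interval having the same integral of $1$ and of $y_1$ against $dm_t$ and using the two-sign-change lemma against the convex kernel $e^{\mu_1 y_1}$ produces a strictly dominating test.

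For sufficiency (interval in $y_1$ $\Rightarrow$ admissibility), I would realise an arbitrary interval-in-$y_1$ test as a limit of Bayes tests of the above kind. Taking $\Lambda_1$ whose $\mu_1$-marginal is concentrated at two points $\mu_-<0<\mu_+$ but suitably spread over the nuisance coordinates, the conditional acceptance region becomes $\{y_1:a_-(t)e^{\mu_- y_1}+a_+(t)e^{\mu_+ y_1}\le b(t)\}$ with $a_\pm,b>0$; the integrand is strictly convex and tends to $+\infty$ at $\pm\infty$, so this is a bounded interval, and by the richness of the exponential family in the nuisance coordinates the weights $a_\pm,b$ --- and then, on letting $\mu_\pm\to0$ after rescaling, the endpoints themselves --- can be made to approximate any prescribed measurable $c_1(\cdot)\le c_2(\cdot)$, with one-sided and degenerate intervals appearing as further limits. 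Admissibility of the resulting limit is then concluded from the closure/admissibility results available for exponential families (of the Stein--Farrell--Brown type).

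The main obstacle is precisely this last step: ``limit of Bayes tests'' does not imply admissibility by any soft argument --- it is false in general --- so the exponential-family structure has to be used in an essential way, and this is the technical core of \citet{MAT_TRU_1967}. A secondary delicate point is the passage between the full $m$-dimensional problem and the conditional one-parameter problem: a one-sided analogue of completeness fails, so one must first upgrade the domination inequalities to equalities of the risk function by a continuity argument in $\mu_1$ near $0$, which is also the reason the conclusion holds only for almost every fixed $y_2,\dots,y_m$.
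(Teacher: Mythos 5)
The paper does not actually prove this lemma: its ``proof'' is the single line ``See \citet{MAT_TRU_1967}'', so the statement is imported as a known external result rather than established in the text, and there is no in-paper argument to compare yours against. That said, your reconstruction is structurally faithful to the Matthes--Truax original: the reduction to a full-rank exponential family with natural statistic $\vec{y}=\Sigma^{-1}\vec{x}$ and natural parameter $\vec{\mu}$, the observation that conditionally on $(y_2,\dots,y_m)$ one faces a one-parameter two-sided problem free of the nuisance parameters, the convexity of $y_1\mapsto\int e^{\mu_1 y_1}\,d\tilde\Lambda_{1,t}(\mu_1)$ against a right-hand side constant in $y_1$ forcing interval sections for (limits of) Bayes tests, and the two-sign-change replacement argument are all the right ingredients. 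Indeed, the ``direct'' variant of necessity you mention --- replacing a non-interval section by the interval matching the conditional integrals of $1$ and of $y_1$, then using the variation-diminishing property of the kernel $e^{\mu_1 y_1}$ --- is the cleaner and essentially complete route for that direction.

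As a self-contained proof, however, the proposal has a genuine gap, and you have correctly located it yourself: the sufficiency direction. Exhibiting an interval-acceptance-region test as an a.e.\ limit of proper Bayes tests does not by itself yield admissibility --- limits of Bayes procedures form an (essentially) complete class, not a class of admissible procedures --- and the passage from bounded intervals, which are genuinely proper Bayes against two-point priors in $\mu_1$ and hence admissible by a.e.\ uniqueness of the Bayes test, to arbitrary measurable families of possibly unbounded or degenerate intervals $[c_1(t),c_2(t)]$ is precisely the technical content of \citet{MAT_TRU_1967}. Your appeal to ``closure/admissibility results of the Stein--Farrell--Brown type'' names the right circle of ideas but does not discharge the step; likewise the upgrade from the dominance inequalities to a.e.\ equality of the conditional tests via completeness is asserted rather than carried out. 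Since the paper itself treats the lemma purely as a citation, the honest options are either to do likewise or to execute that sufficiency step in full: what you have is a correct map of the proof, not yet the proof.
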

\begin{proof}
See \cite{MAT_TRU_1967}. 
\end{proof}

Note that, for each fixed $(y_2,\dots,y_n)$, to study the test function $\phi(\vec{y})=\phi_1(\vec{x})$ as $y_1$ varies, it would be enough to consider sample points $\vec{x}+r\vec{g}$, where $\vec{g}$ is the first column of $\vec{\Sigma}$ and $r$ varies. This is true, since $\vec{y}$ is a function of $\vec{x}$, and so $\vec{y}$ evaluated at $\vec{x}+r\vec{g}$ is $\vec{\Sigma}^{-1}(\vec{x}+r\vec{g})=\vec{y}+(r,0,\dots,0)=(y_1+r,y_2,\dots,y_n)$.\newline
 
 \begin{lem}\label{LEM_PROP_OF_MRD_STAT}
 The functions $U_{tj}$ as given in equation have the following properties.\newline

 For $t \in \{1,\dots,n\}$ and for $j_1,\dots,j_{t-1} \in \{2,\dots,n\}$ with $j_i \neq j_{i'} $ for $i \neq i'$,
 \begin{eqnarray}
 U^{(j_1,\dots,j_{t-1})}_{t1}(\vec{x} + r\vec{g})=U^{(j_1,\dots,j_{t-1})}_{t1}(\vec{x})+r\sigma^{\frac{1}{2}}_{1\cdot( j_1,\dots,j_{t-1})}\nonumber
 \end{eqnarray}
 For $t \in \{1,\dots,n\}$ and for $j \in \{2,\dots,n\} \setminus \{j_1,\dots,j_{t-1}, j_1 \neq 1, \dots, j_n \neq 1 \}, $
 \begin{eqnarray}
 U^{(j_1,\dots,j_{t-1})}_{tj}(\vec{x} + r\vec{g})=U^{(j_1,\dots,j_{t-1})}_{tj}(\vec{x})\nonumber
 \end{eqnarray}
 \end{lem}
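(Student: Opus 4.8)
The two identities are purely linear-algebraic facts about how the adaptively formed residuals move under the displacement $\vec{x}\mapsto\vec{x}+r\vec{g}$, and the plan is to reduce each one to a single computation. Since $\vec{g}$ is the first column of $\Sigma$, its $k$-th entry is $g_k=\sigma_{k1}=\sigma_{1k}$, so the displacement translates the $k$-th coordinate of the data by $r\sigma_{k1}$. The normalizing factor $\sigma^{1/2}_{j\cdot(j_1,\dots,j_{t-1})}$ in the definition of $U^{(j_1,\dots,j_{t-1})}_{tj}$ depends on $\Sigma$ alone, hence is unaffected, and the numerator of $U^{(j_1,\dots,j_{t-1})}_{tj}$ is a linear functional of $\vec{X}$. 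Therefore it suffices to evaluate the increment of that numerator, which by linearity equals
\[
r\Big[g_j-\big(\vec{\sigma}_{(j)}^{(j_1,\dots,j_{t-1})}\big)^{T}\Sigma^{-1}_{(j_1,\dots,j_{t-1},j)}\,\vec{g}^{(j_1,\dots,j_{t-1},j)}\Big],
\]
where $\vec{g}^{(j_1,\dots,j_{t-1},j)}$ denotes $\vec{g}$ with the entries indexed by $j_1,\dots,j_{t-1}$ and $j$ deleted.

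For the first identity the retained coordinate is $j=1$ while $j_1,\dots,j_{t-1}\in\{2,\dots,m\}$. Here $\vec{g}^{(j_1,\dots,j_{t-1},1)}$ is exactly the first column of $\Sigma$ with the entries $j_1,\dots,j_{t-1}$ and $1$ removed, i.e. it coincides with $\vec{\sigma}_{(1)}^{(j_1,\dots,j_{t-1})}$; substituting this into the bracket above turns it into $\sigma_{11}-\big(\vec{\sigma}_{(1)}^{(j_1,\dots,j_{t-1})}\big)^{T}\Sigma^{-1}_{(j_1,\dots,j_{t-1},1)}\vec{\sigma}_{(1)}^{(j_1,\dots,j_{t-1})}=\sigma_{1\cdot(j_1,\dots,j_{t-1})}$ by the very definition of $\sigma_{1\cdot(j_1,\dots,j_{t-1})}$, and dividing by $\sigma^{1/2}_{1\cdot(j_1,\dots,j_{t-1})}$ produces the claimed increment $r\,\sigma^{1/2}_{1\cdot(j_1,\dots,j_{t-1})}$. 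For the second identity we have $j\in\{2,\dots,m\}\setminus\{j_1,\dots,j_{t-1}\}$ and none of the $j_i$ equals $1$, so coordinate $1$ lies in the retained index set $\{1,\dots,m\}\setminus\{j_1,\dots,j_{t-1},j\}$. The key observation is that the column of the principal submatrix $\Sigma_{(j_1,\dots,j_{t-1},j)}$ indexed by coordinate $1$ has entries $\sigma_{k1}$ over the retained indices $k$, which is precisely $\vec{g}^{(j_1,\dots,j_{t-1},j)}$; hence $\Sigma^{-1}_{(j_1,\dots,j_{t-1},j)}\vec{g}^{(j_1,\dots,j_{t-1},j)}$ is the standard unit vector supported at the slot of coordinate $1$, so $\big(\vec{\sigma}_{(j)}^{(j_1,\dots,j_{t-1})}\big)^{T}\Sigma^{-1}_{(j_1,\dots,j_{t-1},j)}\vec{g}^{(j_1,\dots,j_{t-1},j)}$ simply reads off the entry of $\vec{\sigma}_{(j)}^{(j_1,\dots,j_{t-1})}$ at coordinate $1$, namely $\sigma_{j1}=g_j$. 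The bracket vanishes, and $U^{(j_1,\dots,j_{t-1})}_{tj}$ is left invariant.

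None of the individual steps is hard; the part that demands care — and the only real obstacle — is the index bookkeeping: one must consistently track which coordinate plays the role of the response $j$, which coordinates $j_1,\dots,j_{t-1}$ have been removed, and the fact that coordinate $1$ is either the response (first identity) or a retained regressor (second identity), and then verify that the relevant subvector of $\vec{g}$ aligns positionally with the correct column of the correct principal submatrix of $\Sigma$. Invertibility of $\Sigma_{(j_1,\dots,j_{t-1},j)}$, used in the second identity, is automatic since it is a principal submatrix of the positive definite matrix $\Sigma$, and the argument goes through uniformly in $t$, including $t=1$ when the superscript list is empty. These two transformation rules are exactly the input needed to exploit the admissibility criterion of Lemma 3.1: fixing $(y_2,\dots,y_m)$ and varying $y_1$ corresponds to moving along the line $\vec{x}+r\vec{g}$, along which these rules describe the behaviour of the residuals that determine the BSD (equivalently MRD) rejections.
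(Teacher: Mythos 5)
Your proof is correct. The paper does not supply its own argument for this lemma --- it simply points to the proof of Lemma 3.2 in Cohen, Sackrowitz and Xu (2009) --- so your write-up fills in details the paper leaves to an external reference, and the computation you give is the standard one that that reference uses: the displacement $\vec{x}\mapsto\vec{x}+r\vec{g}$ changes the numerator of $U^{(j_1,\dots,j_{t-1})}_{tj}$ by the linear functional $r\big[g_j-(\vec{\sigma}_{(j)}^{(j_1,\dots,j_{t-1})})^{T}\Sigma^{-1}_{(j_1,\dots,j_{t-1},j)}\vec{g}^{(j_1,\dots,j_{t-1},j)}\big]$, which you correctly identify as $r\,\sigma_{1\cdot(j_1,\dots,j_{t-1})}$ when $j=1$ (since the truncated $\vec{g}$ coincides with $\vec{\sigma}_{(1)}^{(j_1,\dots,j_{t-1})}$) and as $0$ when $j\neq 1$ and coordinate $1$ is retained (since $\Sigma^{-1}_{(j_1,\dots,j_{t-1},j)}\vec{g}^{(j_1,\dots,j_{t-1},j)}$ is the unit vector at the slot of coordinate $1$, so the subtracted term reads off $\sigma_{1j}=g_j$). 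Both index-bookkeeping steps check out, and your derivation also confirms the corrected form of the first identity with the factor $\sigma^{1/2}_{1\cdot(j_1,\dots,j_{t-1})}$ present --- precisely the term the authors note (Remark 3.2) was dropped in the original statement of Cohen, Sackrowitz and Xu, and whose presence they emphasize as essential for the subsequent admissibility argument.
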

 \begin{proof}
 See the proof of Lemma 3.2 of \cite{COHEN_SACK_XU_2009}. 
 \end{proof}
 
 \begin{cor}\label{COR_FURTH_PROP_MRD_BRD_STAT}
 For any $r \in \mathbb{R}$, we have
 \begin{eqnarray}
 U_{1j}(\vec{x} + r\vec{g})=U_{1j}(\vec{x}) \mbox{ for all } j \in \{2,\dots,n\},\nonumber
\end{eqnarray}
 which, in turn, implies the following:
 \begin{eqnarray}
 S_{1j}(\vec{x} + r\vec{g})=S_{1j}(\vec{x}) \mbox{ for all } j \in \{2,\dots,n\}.\nonumber
\end{eqnarray}
 \end{cor}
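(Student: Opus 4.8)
The plan is to read off both identities as direct specializations of results already established --- Lemma~3.2 on the behaviour of the residuals $U_{tj}$ along the ray $\vec{x}+r\vec{g}$, and the functional relationship of Theorem~3.1 --- so that no genuinely new computation is needed. For the first identity, I would specialize Lemma~3.2 to stage $t=1$: there the list of previously selected indices $(j_1,\dots,j_{t-1})$ is empty and the condition ``$j\notin\{j_1,\dots,j_{t-1}\}$'' is vacuous, so the second assertion of that Lemma reads $U_{1j}(\vec{x}+r\vec{g})=U_{1j}(\vec{x})$ for every $j\in\{2,\dots,m\}$ and every $r\in\mathbb{R}$, which is exactly the first claim. (A self-contained check is also short: writing $U_{1j}(\vec{X})=\sigma_{j\cdot}^{-1/2}\bigl\{X_j-\vec{\sigma}_{(j)}^{T}\Sigma_{(j)}^{-1}\vec{X}^{(j)}\bigr\}$ and using that $\vec{g}$ is the first column of $\Sigma$, i.e.\ $\vec{g}=\Sigma\vec{e}_1$ with $j\neq 1$, one finds that adding $r\vec{g}$ changes the numerator by $r\bigl(g_j-\vec{\sigma}_{(j)}^{T}\Sigma_{(j)}^{-1}\vec{g}^{(j)}\bigr)$; since $\vec{g}^{(j)}=\Sigma_{(j)}\vec{e}_1^{(j)}$ and $g_j=\vec{\sigma}_{(j)}^{T}\vec{e}_1^{(j)}$ this change vanishes, while $\sigma_{j\cdot}$ is a deterministic constant. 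Citing Lemma~3.2 is cleaner.)

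For the second identity I would substitute this into Theorem~3.1. At $t=1$ the conditioning index set in \eqref{BSD_MRD_RELATION} is again empty, so the relation specializes to
\[
S_{1j}(\vec{X})=\frac{p}{1-p}\sqrt{\frac{\sigma_{j\cdot}}{V+\sigma_{j\cdot}}}\,\exp\!\left\{\frac{V}{2(V+\sigma_{j\cdot})}\{U_{1j}(\vec{X})\}^2\right\},
\]
with $p$, $V$ and $\sigma_{j\cdot}$ all fixed quantities not depending on the data. Hence $S_{1j}$ is a fixed (indeed strictly increasing) function of $\{U_{1j}\}^2$ alone, and combining with the first identity gives $\{U_{1j}(\vec{x}+r\vec{g})\}^2=\{U_{1j}(\vec{x})\}^2$ and therefore $S_{1j}(\vec{x}+r\vec{g})=S_{1j}(\vec{x})$ for every $j\in\{2,\dots,m\}$ and every $r\in\mathbb{R}$.

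There is essentially no obstacle here: the Corollary is a bookkeeping consequence of Lemma~3.2 and Theorem~3.1. The two points needing care are (i) correctly reading off the degenerate $t=1$ instance of the notation --- empty index list, so $S_{1j}$, $U_{1j}$ and $\sigma_{j\cdot}$ carry no superscript --- and (ii) noting that the prefactor and the constant in the exponent of \eqref{BSD_MRD_RELATION} are data-free, so that invariance of $U_{1j}$ along $\{\vec{x}+r\vec{g}:r\in\mathbb{R}\}$ transfers verbatim to $S_{1j}$. This Corollary is then the first step in the admissibility argument: since $\vec{y}=\Sigma^{-1}\vec{x}$ evaluated at $\vec{x}+r\vec{g}$ equals $(y_1+r,y_2,\dots,y_m)$, it shows that every stage-one BSD statistic other than $S_{11}$ stays constant as $y_1$ varies with $y_2,\dots,y_m$ held fixed, so the stage-one decision about $H_{01}$ is driven by $S_{11}$ alone --- which by \eqref{BSD_MRD_RELATION} and Lemma~3.2 is a strictly increasing function of $\bigl(U_{11}(\vec{x})+r\sigma_{1\cdot}^{1/2}\bigr)^2$ in $r$ --- setting up the interval condition of Lemma~3.1.
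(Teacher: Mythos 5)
Your proposal is correct and matches the argument the paper implicitly intends: the corollary is stated without proof precisely because it is the $t=1$ specialization of Lemma~3.2 (empty index list, so the second assertion gives $U_{1j}(\vec{x}+r\vec{g})=U_{1j}(\vec{x})$ for $j\neq 1$) combined with the fact that, by Theorem~3.1, $S_{1j}$ is a data-free strictly increasing function of $\{U_{1j}\}^2$. Your direct verification that the numerator increment $r\bigl(g_j-\vec{\sigma}_{(j)}^{T}\Sigma_{(j)}^{-1}\vec{g}^{(j)}\bigr)$ vanishes is also sound and is a nice self-contained check, but it is not needed beyond citing the lemma.
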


\begin{remark}\label{REMARK_INCREASING_DECREASING}
 Since $\sigma_{1\cdot(j_1,\dots,j_{t-1})}>0$, it follows from Lemma \ref{LEM_PROP_OF_MRD_STAT} that for each fixed $\vec{x}\in\mathbf{R}^{n}$ and given any $(t-1)$ many indices $(j_1,\dots,j_{t-1})$, $|U^{(j_1,\dots,j_{t-1})}_{t1}(\vec{x}+r\vec{g})|$ initially decreases and then increases as $r$ increases. Also for each fixed $\vec{x}\in\mathbf{R}^{n}$ and given any $(j_1,\dots,j_{t-1})$, $U^{(j_1,\dots,j_{t-1})}_{t1}(\vec{x}+r\vec{g})$ is a strictly increasing function of $r$. Therefore, when $|U^{(j_1,\dots,j_{t-1})}_{t1}(\vec{x}+r\vec{g})|$ decreases in $r$, $U^{(j_1,\dots,j_{t-1})}_{t1}(\vec{x}+r\vec{g})$ is negative, while when $|U^{(j_1,\dots,j_{t-1})}_{t1}(\vec{x}+r\vec{g})|$ is increasing in $r$, $U^{(j_1,\dots,j_{t-1})}_{t1}(\vec{x}+r\vec{g})$ is positive. It will be seen in a short while that the preceding observation is crucial for deriving some of the important facts that follow.
\end{remark}

%  \begin{remark}
  In Lemma 3.2 of \cite{COHEN_SACK_XU_2009}, the term $\sigma^{1/2}_{1\cdot j_1,\dots,j_{t-1}}$ was dropped, most likely, due to some typographical error, and was not considered in subsequent theoretical analysis. However, the presence of this term in Lemma \ref{LEM_PROP_OF_MRD_STAT} of the present paper requires some careful attention since $\sigma_{1\cdot(j_1(\vec{x}),\dots,j_{t-1}(\vec{x}))}$ depends on a set of indices $j_1(\vec{x}),\dots,j_{t-1}(\vec{x})$, each of which is a function of the observed data vector $\vec{x}$. It, therefore, becomes necessary to know how this term $\sigma_{1\cdot(j_1(\vec{x}),\dots,j_{t-1}(\vec{x}))}$ behaves as $\vec{x}$ varies. A result of this kind is presented in Lemma \ref{LEM_EQUALITY_OF_INDICES} below.\newline
%  \end{remark}

 Suppose $\phi_1 (\vec{x}^{*})=0$ when $\vec{x}^{*}$ is observed, that is, $\vec{x}^{*}$ is an acceptance point of $H_{01}$. Then the process must stop before $H_{01}$ gets rejected. Suppose the testing procedure stops at some stage $t$ without rejecting $H_{01}$. Let $\vec{x}^{*}+r_{0}\vec{g}$ be a point of rejection of $H_{01}$, that is,
 $\phi_1(\vec{x}^{*}+r_{0}\vec{g})=1$. Let the testing procedure rejects $H_{01}$ at some stage $t_0$ when $\vec{x}^{*}+r_{0}\vec{g}$ is observed. The next lemma gives an important identity between the set of indices $j_l(\vec{x}^{*}+r_{0}\vec{g})$ and $j_l(\vec{x}^{*})$, for $1\leqslant l\leqslant t_0-1$, which shows that these indices will remain invariant if $\min\{t,t_0\}>1$.\newline
 
 \begin{lem}\label{LEM_EQUALITY_OF_INDICES}
  Under the conditions $\phi_1 (\vec{x}^{*}) =0$ and $\phi_1 (\vec{x}^{*}+r_{0}\vec{g}) = 1$ the following holds when $t >1$ and $t_0 >1$:
 \begin{equation}
 j_l(\vec{x}^{*}+r_{0}\vec{g})= j_l(\vec{x}^{*}) \mbox{ for all } l=1,\dots,t_0 -1. \nonumber
 \end{equation}
 \end{lem}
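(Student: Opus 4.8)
The plan is to argue by induction on $l$, using the fact that translating $\vec{x}^{*}$ by a multiple of the first column $\vec{g}$ of $\Sigma$ leaves every ``non-first'' BSD statistic untouched. Two facts drive this. First, by the functional relationship of Theorem 3.1, for a \emph{fixed} tuple of conditioning indices $j_1,\ldots,j_{l-1}$ the statistic $S_{lj}^{(j_1,\ldots,j_{l-1})}(\vec{x})$ depends on $\vec{x}$ only through $U_{lj}^{(j_1,\ldots,j_{l-1})}(\vec{x})$, the remaining factors being determined by $\Sigma$, $p$, $V$ and the indices alone. Second, by Lemma 3.2 (and Corollary 3.1 for the stage $l=1$), $U_{lj}^{(j_1,\ldots,j_{l-1})}(\vec{x}^{*}+r_0\vec{g})=U_{lj}^{(j_1,\ldots,j_{l-1})}(\vec{x}^{*})$ whenever $j\neq 1$ and $1\notin\{j_1,\ldots,j_{l-1}\}$. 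Combining the two, $S_{lj}^{(j_1,\ldots,j_{l-1})}(\vec{x}^{*}+r_0\vec{g})=S_{lj}^{(j_1,\ldots,j_{l-1})}(\vec{x}^{*})$ for all such $j$; so, as long as none of the already-rejected indices equals $1$, the only stage-$l$ statistic that can possibly change on passing from $\vec{x}^{*}$ to $\vec{x}^{*}+r_0\vec{g}$ is the one indexed by $j=1$.

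For the induction I would carry the statement $P(l-1)$: $j_i(\vec{x}^{*})=j_i(\vec{x}^{*}+r_0\vec{g})=:j_i\neq 1$ for $i=1,\ldots,l-1$, and the BSD procedure run on $\vec{x}^{*}$ reaches and makes a rejection at each of stages $1,\ldots,l-1$. The hypotheses $t>1$ and $t_0>1$ start the induction ($P(0)$ is vacuous, and the case $l=1$ uses that both procedures pass stage $1$, the one at $\vec{x}^{*}$ without rejecting $H_{01}$). For the inductive step at a stage $l\leq t_0-1$: by $P(l-1)$ the two conditioning tuples coincide and avoid the coordinate $1$, so $\{S_{lj}^{(j_1,\ldots,j_{l-1})}\}_{j\neq 1}$ takes identical values at $\vec{x}^{*}$ and $\vec{x}^{*}+r_0\vec{g}$. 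Since $H_{01}$ is rejected at $\vec{x}^{*}+r_0\vec{g}$ only at stage $t_0>l$, the stage-$l$ $\argmax$ there, $j_l(\vec{x}^{*}+r_0\vec{g})$, is an index $\neq 1$; hence it also maximizes the identical family $\{S_{lj}^{(j_1,\ldots,j_{l-1})}(\vec{x}^{*})\}_{j\neq 1}$, and the common maximal value exceeds $\delta$ because that procedure continues past stage $l$.

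It then remains to check that $S_{l1}^{(j_1,\ldots,j_{l-1})}(\vec{x}^{*})$ does not overtake this value. If it did, stage $l$ at $\vec{x}^{*}$ would have $\argmax$ equal to $1$ with value $>\delta$, so the procedure (which reaches stage $l$ by $P(l-1)$) would reject $H_{01}$ at stage $l$, contradicting $\phi_1(\vec{x}^{*})=0$. Therefore $S_{l1}^{(j_1,\ldots,j_{l-1})}(\vec{x}^{*})$ stays below the maximum, the stage-$l$ $\argmax$ at $\vec{x}^{*}$ is $j_l(\vec{x}^{*}+r_0\vec{g})\neq 1$, i.e. $j_l(\vec{x}^{*})=j_l(\vec{x}^{*}+r_0\vec{g})$, and since the maximal value exceeds $\delta$ the procedure at $\vec{x}^{*}$ also rejects at stage $l$, so $P(l)$ holds. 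Iterating to $l=t_0-1$ gives the claim; as a byproduct the procedure at $\vec{x}^{*}$ rejects at all of stages $1,\ldots,t_0-1$, so in fact $t\geq t_0$.

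The work is almost entirely bookkeeping, and the single point one must get right is the synchronization of the two running index sets: only when the conditioning multiset is common to both points and misses the coordinate $1$ does Lemma 3.2 apply and does ``only $S_{l1}$ can move'' hold at that stage; this is precisely the subtlety about the data-dependent conditioning indices in $\sigma_{1\cdot(j_1(\vec{x}),\ldots,j_{t-1}(\vec{x}))}$ flagged earlier in this section, and the induction is what tames it. A minor technicality is tie-breaking in the $\argmax$ defining the $j_l(\cdot)$; it is harmless on the full-measure set of $\vec{x}$ in general position, which is all that the admissibility application of Lemma 3.1 requires, or it can be removed by adopting a deterministic tie-breaking rule.
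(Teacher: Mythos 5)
Your proposal is correct and rests on exactly the same ingredients as the paper's proof: Corollary 3.1 for stage one, Lemma 3.2 plus the functional relationship of Theorem 3.1 to conclude that, for a common conditioning tuple avoiding the first coordinate, only the statistic $S_{l1}$ can change under the shift $\vec{x}^{*}\mapsto\vec{x}^{*}+r_0\vec{g}$, and the observation that $S_{l1}$ cannot be the stage-$l$ maximizer at $\vec{x}^{*}+r_0\vec{g}$ for $l<t_0$ (else $H_{01}$ would be rejected before stage $t_0$). The one genuine difference is organizational but worth noting: the paper's proof ends with a chain of $\argmax$ identities whose last step silently replaces the $\argmax$ over $j\in\{2,\dots,m\}\setminus\{\cdots\}$ at $\vec{x}^{*}$ by $j_l(\vec{x}^{*})$, which is an $\argmax$ over a set that also contains $j=1$; ruling out $j=1$ there requires knowing that the procedure at $\vec{x}^{*}$ actually reaches stage $l$ with the same rejected set and does not reject $H_{01}$, a fact the paper only establishes separately (and after the fact) in Lemma 3.4. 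Your explicit induction carries precisely this invariant along, so it closes that gap cleanly and delivers $t\geq t_0$ (the content of Lemma 3.4) as a byproduct rather than as a subsequent contrapositive argument. Your remark about tie-breaking in the $\argmax$ is also a legitimate point that the paper leaves implicit; either a deterministic tie-breaking convention or restriction to a full-measure set of $\vec{x}$ suffices for the admissibility application.
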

 
\begin{proof}
 See Appendix.
\end{proof}
 
Lemma \ref{LEM_EQUALITY_OF_INDICES}, coupled with Corollary \ref{COR_FURTH_PROP_MRD_BRD_STAT}, lead to the following important result on the relation between $t_0$ and $t$ defined before.\newline

\begin{lem}\label{LEM_BSD_IMP_OBS}
Under the conditions $\phi_1 (\vec{x}^{*}) =0$ and $\phi_1 (\vec{x}^{*}+r_{0}\vec{g}) = 1,$ the BSD procedure must reject $H_{01}$ within $t$ steps when $\vec{x}^{*}+r_{0}\vec{g}$ is observed, that is $t_0 \leqslant t,$ where $t_0$ and $t$ are defined as before. 
\end{lem}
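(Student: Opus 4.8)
The plan is a proof by contradiction: suppose $t_0>t$ and derive a contradiction either with $\phi_1(\vec{x}^*+r_0\vec{g})=1$ or with the fact that, at $\vec{x}^*$, the procedure already halts at stage $t$. First I would settle the two boundary cases in which Lemma 3.3 does not directly apply. If $t_0=1$ there is nothing to prove. If $t=1$, acceptance at stage $1$ means $\max_{1\le j\le m}S_{1j}(\vec{x}^*)\le\delta$; by Corollary 3.1 the values $S_{1j}$ with $j\ne 1$ are unchanged at $\vec{x}^*+r_0\vec{g}$, hence still $\le\delta$, so if $S_{11}(\vec{x}^*+r_0\vec{g})$ were also $\le\delta$ the procedure would accept at stage $1$ and give $\phi_1(\vec{x}^*+r_0\vec{g})=0$, a contradiction; thus $S_{11}(\vec{x}^*+r_0\vec{g})>\delta\ge S_{1j}(\vec{x}^*+r_0\vec{g})$ for every $j\ne 1$, so $j=1$ uniquely attains the stage-$1$ maximum, $H_{01}$ is rejected at stage $1$, and $t_0=1\le t$. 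From here on I would take $t>1$ and, under the contradiction hypothesis, $t_0>t\ge 2$.

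Next I would pin down the rejection path. Since $H_{01}$ is rejected only at stage $t_0$ when $\vec{x}^*+r_0\vec{g}$ is observed, $j_l(\vec{x}^*+r_0\vec{g})\ne 1$ for $l=1,\cdots,t_0-1$; since $\phi_1(\vec{x}^*)=0$, also $j_l(\vec{x}^*)\ne 1$ for $l=1,\cdots,t-1$. Lemma 3.3 then gives $j_l(\vec{x}^*+r_0\vec{g})=j_l(\vec{x}^*)$ for $l=1,\cdots,t_0-1$, in particular for $l=1,\cdots,t-1$, so at stage $t$ both data points carry the same conditioning index set $(j_1(\vec{x}^*),\cdots,j_{t-1}(\vec{x}^*))$, and this set does not contain $1$. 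I would then invoke Lemma 3.2: passing from $\vec{x}^*$ to $\vec{x}^*+r_0\vec{g}$ leaves $U^{(j_1(\vec{x}^*),\cdots,j_{t-1}(\vec{x}^*))}_{tj}$ unchanged for every remaining $j\ne 1$. Since (\ref{BSD_MRD_RELATION}) writes $S^{(j_1,\cdots,j_{t-1})}_{tj}$ as a strictly increasing function of $\big|U^{(j_1,\cdots,j_{t-1})}_{tj}\big|$ whose coefficients depend only on $\Sigma$ and the (now common) index set, it follows that $S^{(j_1(\vec{x}^*),\cdots,j_{t-1}(\vec{x}^*))}_{tj}(\vec{x}^*+r_0\vec{g})=S^{(j_1(\vec{x}^*),\cdots,j_{t-1}(\vec{x}^*))}_{tj}(\vec{x}^*)$ for all remaining $j\ne 1$.

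Finally I would extract the contradiction at stage $t$. Acceptance at $\vec{x}^*$ forces $S^{(j_1(\vec{x}^*),\cdots,j_{t-1}(\vec{x}^*))}_{tj}(\vec{x}^*)\le\delta$ over all remaining $j$, so $S^{(j_1(\vec{x}^*),\cdots,j_{t-1}(\vec{x}^*))}_{tj}(\vec{x}^*+r_0\vec{g})\le\delta$ for every remaining $j\ne 1$. If in addition $S^{(j_1(\vec{x}^*),\cdots,j_{t-1}(\vec{x}^*))}_{t1}(\vec{x}^*+r_0\vec{g})\le\delta$, then stage $t$ is an acceptance for $\vec{x}^*+r_0\vec{g}$ as well, the procedure stops without rejecting $H_{01}$, and $\phi_1(\vec{x}^*+r_0\vec{g})=0$, contradicting the hypothesis. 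Otherwise $S^{(j_1(\vec{x}^*),\cdots,j_{t-1}(\vec{x}^*))}_{t1}(\vec{x}^*+r_0\vec{g})>\delta$, and this value strictly exceeds $S^{(j_1(\vec{x}^*),\cdots,j_{t-1}(\vec{x}^*))}_{tj}(\vec{x}^*+r_0\vec{g})$ for every remaining $j\ne 1$, so $j=1$ uniquely attains the stage-$t$ maximum, $H_{01}$ is rejected at stage $t$, and $t_0=t$, contradicting $t_0>t$. In both cases the assumption is untenable, so $t_0\le t$.

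The main obstacle --- and the reason Lemma 3.3 is proved beforehand --- is guaranteeing that the stage-$t$ conditioning index sets for $\vec{x}^*$ and $\vec{x}^*+r_0\vec{g}$ genuinely coincide and avoid the index $1$; only then does Lemma 3.2 give invariance of the residuals $U_{tj}$ for $j\ne 1$, and only then do the $\sigma_{j\cdot(\cdot)}$ coefficients in (\ref{BSD_MRD_RELATION}) match so that invariance of the $U$'s transfers to the $S$'s. A secondary point requiring care is that along the ray $\vec{x}^*+r\vec{g}$ it is $U_{t1}$ (equivalently $S_{t1}$) alone that varies, so the only way the procedure for $\vec{x}^*+r_0\vec{g}$ can depart from that for $\vec{x}^*$ at any given stage is by rejecting $H_{01}$ at that stage.
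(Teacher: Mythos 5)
Your proof is correct and takes essentially the same route as the paper's: a proof by contradiction that uses Lemma 3.3 to show the rejection paths of $\vec{x}^{*}$ and $\vec{x}^{*}+r_{0}\vec{g}$ coincide up to stage $t$, Lemma 3.2 to get invariance of the residuals $U_{tj}$ for $j\neq 1$, and the functional relation of Theorem 3.1 to transfer that invariance to the statistics $S_{tj}$, yielding a contradiction at stage $t$. The only differences are cosmetic: you make explicit the boundary sub-case $t=1$, $t_{0}>1$ (which the paper's proof leaves implicit), and you derive the contradiction by pushing the stage-$t$ acceptances forward to $\vec{x}^{*}+r_{0}\vec{g}$ rather than pulling the inequality $S_{tj_t}>\delta$ back to $\vec{x}^{*}$ as the paper does.
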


\begin{proof}
 See Appendix.
\end{proof}

\begin{lem}\label{LEM_PROP_BSD_RELN_TWO_OBS}
Suppose that for some $\vec{x}^{*}$ and $r_0 > 0,$ $\phi_1(\vec{x}^{*})=0$ and $\phi_1(\vec{x}^{*}+r_0\vec{g})=1.$ Then $\phi_1(\vec{x}^{*}+r_0\vec{g})=1$ for all $r > r_0.$
\end{lem}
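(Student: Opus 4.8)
The plan is to certify, through the Matthes--Truax interval criterion (Lemma 3.1), that along the half-ray $\vec{x}^{*}+r\vec{g}$, $r\ge r_0$, the BSD procedure never reverts from rejecting to accepting $H_{01}$; equivalently, I would show directly that $\phi_1(\vec{x}^{*}+r\vec{g})=1$ for every $r>r_0$. Two structural facts along this ray carry the argument. First, for any index set $A\subseteq\{2,\dots,m\}$ not containing $1$, Lemma 3.2 says $U^{(A)}_{lj}(\vec{x}^{*}+r\vec{g})$ is constant in $r$ for $j\ne 1$, and by Theorem 3.1 the factor converting $U^{(A)}_{lj}$ into $S^{(A)}_{lj}$ depends only on $\sigma_{j\cdot A}$, hence only on $\Sigma$ and $A$; so $S^{(A)}_{lj}(\vec{x}^{*}+r\vec{g})$ does not depend on $r$ when $j\ne1$ (Corollary 3.1 being the $l=1$ case). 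Second, again by Lemma 3.2 and Theorem 3.1, $S^{(A)}_{l1}(\vec{x}^{*}+r\vec{g})$ is a fixed increasing function of $|U^{(A)}_{l1}(\vec{x}^{*})+r\,\sigma^{1/2}_{1\cdot A}|$, so as a function of $r$ it is $\cup$-shaped: strictly decreasing and then strictly increasing, with the increasing branch being exactly where $U^{(A)}_{l1}(\vec{x}^{*}+r\vec{g})>0$ (Remark 3.2). The two hypotheses $\phi_1(\vec{x}^{*})=0$ and $\phi_1(\vec{x}^{*}+r_0\vec{g})=1$ provide the reference points $r=0$ and $r=r_0>0$ that will pin the decisive comparison onto the increasing branch of such a curve.

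Setting up, I would let $t$ be the stage at which the procedure stops, without rejecting $H_{01}$, when $\vec{x}^{*}$ is observed, and $t_0$ the stage at which it rejects $H_{01}$ when $\vec{x}^{*}+r_0\vec{g}$ is observed. Lemma 3.4 gives $t_0\le t$, and Lemma 3.3 gives $j_l(\vec{x}^{*}+r_0\vec{g})=j_l(\vec{x}^{*})=:j_l$ for $l=1,\dots,t_0-1$, none of which equals $1$ since $\vec{x}^{*}$ never rejects $H_{01}$. Fix $r>r_0$. The core step is an induction on $l=1,\dots,t_0-1$ showing that, when $\vec{x}^{*}+r\vec{g}$ is observed, either $H_{01}$ has already been rejected at some stage $\le l$, or the procedure rejects exactly $j_1,\dots,j_l$ at stages $1,\dots,l$. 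Granting this through stage $l-1$, the surviving pool at stage $l$ is common to $\vec{x}^{*}+r\vec{g}$ and $\vec{x}^{*}$; let $M_l=\max_j S^{(j_1,\dots,j_{l-1})}_{lj}(\vec{x}^{*})$, which exceeds $\delta$ because $l<t$ so $\vec{x}^{*}$ does not stop at stage $l$. For $j\ne1$ the statistic at $\vec{x}^{*}+r\vec{g}$ equals that at $\vec{x}^{*}$, hence is $\le M_l$. If $S^{(j_1,\dots,j_{l-1})}_{l1}(\vec{x}^{*}+r\vec{g})>M_l$, then $1$ is the maximiser and its statistic exceeds $\delta$, so $H_{01}$ is rejected at stage $l$; otherwise $j_l$ remains a maximiser with statistic $M_l>\delta$, so $j_l$ is rejected and the induction continues (ties are immaterial to the conclusion).

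It then remains to treat stage $t_0$ under the assumption that the induction carried with $H_{01}$ not yet rejected, so the surviving pool there is common to $\vec{x}^{*}$, $\vec{x}^{*}+r_0\vec{g}$ and $\vec{x}^{*}+r\vec{g}$. At $\vec{x}^{*}+r_0\vec{g}$ the index $1$ is the maximiser at stage $t_0$ with $S^{(j_1,\dots,j_{t_0-1})}_{t_0 1}(\vec{x}^{*}+r_0\vec{g})>\delta$. A short case split on whether the maximum of $\vec{x}^{*}$'s stage-$t_0$ statistics over the surviving pool is attained away from coordinate $1$ (using $\phi_1(\vec{x}^{*})=0$: if attained only at $1$ then, were $t_0<t$, the procedure would reject $H_{01}$ there, so $t_0=t$ and all those statistics are $\le\delta$) yields $S^{(j_1,\dots,j_{t_0-1})}_{t_0 1}(\vec{x}^{*})\le S^{(j_1,\dots,j_{t_0-1})}_{t_0 1}(\vec{x}^{*}+r_0\vec{g})$. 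Since $0<r_0$ and $r\mapsto S^{(j_1,\dots,j_{t_0-1})}_{t_0 1}(\vec{x}^{*}+r\vec{g})$ is $\cup$-shaped, this inequality forces $r_0$ onto its increasing branch, so for $r>r_0$
\[
S^{(j_1,\dots,j_{t_0-1})}_{t_0 1}(\vec{x}^{*}+r\vec{g})\;>\;S^{(j_1,\dots,j_{t_0-1})}_{t_0 1}(\vec{x}^{*}+r_0\vec{g})\;>\;\delta,
\]
and this value still dominates every $S^{(j_1,\dots,j_{t_0-1})}_{t_0 j}(\vec{x}^{*}+r\vec{g})$, $j\ne1$, which are unchanged from $r=r_0$. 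Hence $1$ is the maximiser at stage $t_0$ with statistic above $\delta$, so $H_{01}$ is rejected and $\phi_1(\vec{x}^{*}+r\vec{g})=1$; together with the inductive branch, this gives the claim for all $r>r_0$.

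I expect the main obstacle to be keeping the sets of already-rejected indices synchronised across the three rays $r=0$, $r=r_0$ and general $r>r_0$: because the coefficient $\sigma_{1\cdot(j_1(\vec{x}),\dots,j_{t-1}(\vec{x}))}$ appearing in the BSD--MRD relation is data-dependent, the clean ``$|U|$ versus a fixed constant $C_t$'' picture underlying the MRD proof is unavailable, and one genuinely needs Lemmas 3.3 and 3.4 to guarantee that along the relevant initial segment of the ray both these coefficients and the residuals $U_{tj}$ with $j\ne1$ are frozen. The remaining delicate point is the bookkeeping certifying that $r_0$ lies on the increasing branch at stage $t_0$; the borderline equality configurations there correspond to a Lebesgue-null set of $(y_2,\dots,y_m)$, which is all that Lemma 3.1 requires.
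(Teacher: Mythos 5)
Your proof is correct, and its analytic core is the same as the paper's: both establish the key inequality $S^{(j_1,\dots,j_{t_0-1})}_{t_0 1}(\vec{x}^{*}) \le S^{(j_1,\dots,j_{t_0-1})}_{t_0 1}(\vec{x}^{*}+r_0\vec{g})$, use the strict monotonicity of $S_{t_0 1}$ in $|U_{t_0 1}|$ together with the observation of Remark 3.2 to conclude $U^{(j_1,\dots,j_{t_0-1})}_{t_0 1}(\vec{x}^{*}+r_0\vec{g})>0$, and then exploit the strict increase of $U_{t_0 1}$ along the ray $r\mapsto \vec{x}^{*}+r\vec{g}$. Where you diverge is in how the conclusion $\phi_1(\vec{x}^{*}+r\vec{g})=1$ is extracted. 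The paper argues contrapositively: it posits an acceptance point $\vec{x}^{*}+r_1\vec{g}$ with $r_1>r_0$, re-applies Lemmas 3.3 and 3.4 to the pair $(\vec{x}^{*}+r_1\vec{g},\,\vec{x}^{*}+r_0\vec{g})$ to transfer the stage-$t_0$ inequality to that pair, and contradicts $U_{t_0 1}(\vec{x}^{*}+r_1\vec{g})>U_{t_0 1}(\vec{x}^{*}+r_0\vec{g})>0$. You instead fix $r>r_0$ and run a forward induction showing that the procedure at $\vec{x}^{*}+r\vec{g}$ either rejects $H_{01}$ before stage $t_0$ or reproduces the rejection path $j_1,\dots,j_{t_0-1}$ and then rejects $H_{01}$ at stage $t_0$. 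Your route is self-contained --- it avoids a second invocation of Lemmas 3.3--3.4 on a new pair of points --- at the cost of the explicit stage-by-stage bookkeeping; the tie-breaking caveat you flag is indeed harmless, since a tie won by index $1$ only produces an earlier rejection, which is the desired conclusion anyway. Two places where your write-up is actually more careful than the paper's: your case split at stage $t_0$ covers the configuration in which the maximiser of the stage-$t_0$ statistics at $\vec{x}^{*}$ is coordinate $1$ itself (forcing $t_0=t$ and all those statistics $\le\delta$), a case the paper's displayed chain of inequalities silently assumes away; and your argument treats $t_0=1$ and $t_0>1$ uniformly, whereas the paper handles $t_0=1$ separately at the end.
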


\begin{proof}
 See Appendix.
\end{proof}

Using Lemma \ref{LEM_MATH_TRUAX} and Lemma \ref{LEM_PROP_BSD_RELN_TWO_OBS} above, it follows that that for testing $H_{01} \mbox { vs } H_{A1}$, the individual decision $\phi_1(\vec{X})$ induced by the BSD method would be admissible with respect to the standard $0-1$ loss (\ref{INDIVIDUAL_LOSS}). Proof that the other tests induced by the BSD method for the remaining individual testing problems will be admissible would follow analogously. Since admissibility of each individual induced decision implies the admissibility of the corresponding multiple testing procedure under the vector loss (\ref{VECTOR_LOSS}), this leads us to the desired admissibility property of our proposed testing methodology as presented in the following theorem.
% % % % Recall that for the present testing problem admissibility of a multiple testing procedure implies that it would be admissible for each individual testing problem. Now  for testing $H_{01} \mbox { vs } H_{A1}$ the test $\phi_1(\vec{x})$, induced by the BSD method, would be admissible.

\begin{thm}\label{THM_ADMISSIBILITY_BSD}
Suppose $\vec{X}\sim N_{n}(\vec{\theta},\vec{\Sigma})$, where $\vec{\theta}\in\mathbb{R}^{n}$ is unknown, but fixed and $\vec{\Sigma}$ is an $n \times n$ arbitrary but known positive definite covariance matrix. Then, for the two sided multiple testing problem (\ref{TESTING_PROBLEM}), the BSD procedure based on the statistics $S_{tj}$'s is admissible with respect to the vector loss function (\ref{VECTOR_LOSS}).
\end{thm}
% % 
% % \begin{proof}
% %  See Appendix.
% % \end{proof}
 
A careful inspection of the proof of Lemma \ref{LEM_PROP_BSD_RELN_TWO_OBS} reveals that one does not need the functional form of the statistics $S_{tj}$'s for proving the desirable convexity property possessed by the BSD testing procedure. Instead, the fact that the $S_{tj}$'s are non-decreasing functions of the corresponding $|U_{tj}|$'s is all what we needed there. Hence, the corresponding arguments work equally well even if we consider any other step-down procedure which depends on a set of statistics $S_{tj}$'s, where each $S_{tj}$ is a non-decreasing function of the corresponding $|U_{tj}|$. This observation leads us to the following important theorem from which the admissibility property of the MRD testing procedure due to \cite{COHEN_SACK_XU_2009} follows immediately.\newline

\begin{thm}\label{THM_ADMISSIBILITY_GENERAL}
Under the assumptions of Theorem \ref{THM_ADMISSIBILITY_BSD}, any step-down multiple testing procedure for the two sided multiple testing problem (\ref{TESTING_PROBLEM}) based on a set of statistics $S_{tj}$'s, where each $S_{tj}$ is a non-decreasing function of the absolute value of the corresponding MRD statistics $U_{tj}$, will be admissible with respect to the vector loss function (\ref{VECTOR_LOSS}).
\end{thm}

Theorem \ref{THM_ADMISSIBILITY_GENERAL} above therefore generalizes the admissibility property of the MRD procedure to a very large collection of step-down multiple testing procedure. It should be carefully observed here that even if we consider some prior distributions to the model parameters $p$ and $V$ in (\ref{TWO_GRP_THETA_PRIOR}), the resulting version of the BSD procedure still possesses the desirable convexity property and hence it is admissible with respect to the vector loss function (\ref{VECTOR_LOSS}).

% \paragraph{Remarks}
% \begin{enumerate}
%  \item In case the Oracle parameters $p$ and $V$ are unknown, Lemma 5.3 and Theorem 3 still hold true by assuming some priors on the unknown model parameters $p$ and $V.$ Thus the BSD procedure remains admissible even when the parameters $p$ and $V$ are not known.
%  \item If, instead of the the i.i.d. $Bernoulli(p)$ prior distribution on the latent variables $\nu_i$'s, we consider any other prior distribution $\pi(\nu_1,\dots,\nu_n)$ such that $\pi(\nu_1,\dots,\nu_n) > 0$ for all $\pi(\nu_1,\dots,\nu_n) \in \{0,1\}^{n},$ our proposed Bayesian testing procedure remains admissible. 
%  %\item In case of unknown $Sigma$ the proposed Bayesian Step Down
% \end{enumerate}

\section{Alternative Representation of the BSD Procedure}
Observe that, in order to implement the BSD procedure based on the statistics $S_{tj}$'s defined in (\ref{BSD_STAT_DEFN}), at the $t$-th stage, one needs to compute the inverses of $(n-t+1)$ many sub-matrices $\vec{\Sigma}_{(j_1(\vec{x}),\dots,j_{t-1}(\vec{x}),j)}$ and $(n-t+1)$ many ratios of determinants of the form
 \begin{eqnarray}\label{RATIO_DET}
  \frac{\big| \big(\vec{\Sigma}_{(j_1(\vec{x}),\dots,j_{t-1}(\vec{x}))}+VB_{\vec{\nu}^{(j_1(\vec{x}),\dots,j_{t-1}(\vec{x}))}:\nu_j=0,\vec{\nu}^{(j_1(\vec{x}),\dots,j_{t-1}(\vec{x}),j)}=\vec{0}}\big)\big|}{\big| \big(\vec{\Sigma}_{(j_1(\vec{x}),\dots,j_{t-1}(\vec{x}))}+VB_{\vec{\nu}^{(j_1(\vec{x}),\dots,j_{t-1}(\vec{x}))}:\nu_j=1,\vec{\nu}^{(j_1(\vec{x}),\dots,j_{t-1}(\vec{x}),j)}=\vec{0}}\big)\big|}
 \end{eqnarray}
obtained through the corresponding likelihoods, for each $t=1,\dots,n$. Both of these might be a troublesome issue from the computational viewpoint even for moderately large $n$. Moreover, while computing the ratio of determinants in (\ref{RATIO_DET}) above, it may so happen that the corresponding denominator, although positive, may be so small that the computer may report it as zero, thereby producing too many erroneous results. The latter issue may arise for certain choices of $\vec{\Sigma}$ and/or, when the number of tests $n$ is quite large, which can be avoided through the functional relationship between the BSD statistics $S_{tj}$ and the corresponding MRD statistics $U_{tj}$ as given by (\ref{BSD_MRD_RELATION}), while the challenge involved in computing the inverses of $(n-t+1)$ many sub-matrices at each step $t$ still remains. Note that one may face the same computational issue regarding the inversion of $(n-t+1)$ many sub-matrices at every step involved in the implementation the MRD procedure. In this section, we show that how the aforesaid computational issues can be overcome through an alternative way of representing the BSD statistics as well as the MRD statistics. The aforesaid alternative representation results in a remarkable computational savings and facilitates the implementation of both the BSD and MRD procedures to a large extent which will be made more precise later in this section. Towards that, we first derive some non-trivial and quite useful algebraic identities using certain results from matrix analysis. These are listed as Lemma \ref{LEM_ALT_REP_1} - Lemma \ref{LEM_ALT_REP_4} and are presented below.\newline
 
\begin{lem}\label{LEM_ALT_REP_1}
For any arbitrary variance-covariance matrix $\vec{\Sigma} $ and for any fixed $\vec{\nu}=(\nu_1,\dots,\nu_n)\in\{0,1\}^{n}$, we have the following identity:
\begin{eqnarray}
  (\vec{\Sigma}+VB_{\vec{\nu}:\nu_i=1})^{-1}=(\vec{\Sigma}+VB_{\vec{\nu}:\nu_i=0})^{-1} - \frac{V}{1+Vb_{ii}(\vec{\nu})} \vec{b_i}(\vec{\nu})\vec{b_i}(\vec{\nu})^{T}\nonumber
\end{eqnarray}
 where $\vec{b_i}(\vec{\nu})$ denotes the $i$-th column vector of the matrix $(\vec{\Sigma}+VB_{\vec{\nu}:\nu_i=0})^{-1}$ and $b_{ii}(\vec{\nu})$ is the $i$-th element of $\vec{b_i}(\vec{\nu}),$ that is, the $i$-th diagonal element of $(\vec{\Sigma}+VB_{\vec{\nu}:\nu_i=0})^{-1}$.
\end{lem}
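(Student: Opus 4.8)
The plan is to prove the matrix identity by recognizing it as a special case of the Sherman–Morrison rank-one update formula. The key observation is that the two matrices $\Sigma + VB_{\vec{\nu},\nu_i=1}$ and $\Sigma + VB_{\vec{\nu},\nu_i=0}$ differ by exactly a rank-one term: since $B_{\vec{\nu}}$ is diagonal with entries $\nu_1,\dots,\nu_m$, changing $\nu_i$ from $0$ to $1$ adds $V$ to the $(i,i)$ entry and nothing else, i.e.
\begin{eqnarray}
\Sigma + VB_{\vec{\nu},\nu_i=1} = \big(\Sigma + VB_{\vec{\nu},\nu_i=0}\big) + V \vec{e}_i \vec{e}_i^{T}, \nonumber
\end{eqnarray}
where $\vec{e}_i$ is the $i$-th standard basis vector in $\mathbb{R}^m$.

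First I would set $A = \Sigma + VB_{\vec{\nu},\nu_i=0}$, which is positive definite (being $\Sigma$ plus a nonnegative diagonal), hence invertible, so that $A^{-1}$ exists and $\vec{b}_i(\vec{\nu}) = A^{-1}\vec{e}_i$ is its $i$-th column and $b_{ii}(\vec{\nu}) = \vec{e}_i^{T} A^{-1} \vec{e}_i$ its $i$-th diagonal entry. Then I would apply the Sherman–Morrison identity
\begin{eqnarray}
(A + \vec{u}\vec{v}^{T})^{-1} = A^{-1} - \frac{A^{-1}\vec{u}\vec{v}^{T}A^{-1}}{1 + \vec{v}^{T}A^{-1}\vec{u}}, \nonumber
\end{eqnarray}
which is valid whenever $1 + \vec{v}^{T}A^{-1}\vec{u} \neq 0$, with the choice $\vec{u} = V\vec{e}_i$ and $\vec{v} = \vec{e}_i$ (equivalently $\vec{u} = \vec{e}_i$, $\vec{v} = V\vec{e}_i$). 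Substituting, the denominator becomes $1 + V\vec{e}_i^{T}A^{-1}\vec{e}_i = 1 + Vb_{ii}(\vec{\nu})$, and the numerator becomes $V (A^{-1}\vec{e}_i)(A^{-1}\vec{e}_i)^{T} = V\vec{b}_i(\vec{\nu})\vec{b}_i(\vec{\nu})^{T}$ (using symmetry of $A^{-1}$, so that $\vec{e}_i^{T}A^{-1} = (A^{-1}\vec{e}_i)^{T} = \vec{b}_i(\vec{\nu})^{T}$). This yields exactly the claimed identity.

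The only point requiring a brief justification is that the denominator $1 + Vb_{ii}(\vec{\nu})$ is nonzero, so that Sherman–Morrison applies: since $A = \Sigma + VB_{\vec{\nu},\nu_i=0}$ is positive definite, so is $A^{-1}$, hence $b_{ii}(\vec{\nu}) = \vec{e}_i^{T}A^{-1}\vec{e}_i > 0$, and as $V > 0$ we get $1 + Vb_{ii}(\vec{\nu}) > 1 > 0$. There is no real obstacle here; the lemma is essentially a packaging of Sherman–Morrison, and the main content is simply identifying the rank-one perturbation correctly and carrying the bookkeeping of which matrix plays the role of $A$. I would keep the write-up to these few lines and not belabor the elementary verification.
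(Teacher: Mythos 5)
Your proposal is correct and follows essentially the same route as the paper: the paper also identifies $\Sigma+VB_{\vec{\nu},\nu_i=1}$ as the matrix $\Sigma+VB_{\vec{\nu},\nu_i=0}$ perturbed by $V$ in the $(i,i)$ entry and invokes the Sherman--Morrison--Woodbury identity (its Proposition 7.2), merely writing the rank-one update as $AI^{-1}A^{T}$ with $A=\sqrt{V}\,\mathrm{diag}(\vec{e}_i)$ rather than as $V\vec{e}_i\vec{e}_i^{T}$. Your vector form is a cleaner packaging of the identical argument, and your added remark that $1+Vb_{ii}(\vec{\nu})>0$ by positive definiteness is a sound (if implicit in the paper) justification.
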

\begin{proof}
 See Appendix.
\end{proof}

\begin{lem}\label{LEM_ALT_REP_2}
For any arbitrary positive definite covariance matrix $\vec{\Sigma} $ and for any $\vec{\nu} \in \{0,1\}^{n},$ we have the following identity:
\begin{eqnarray}
 \frac{|\vec{\Sigma}+VB_{\vec{\nu}:\nu_i=1} |}{|\vec{\Sigma}+VB_{\vec{\nu}:\nu_i=0}|}&=& 1+Vb_{ii}(\vec{\nu})\nonumber
\end{eqnarray}
where $b_{ii}(\vec{\nu})$ denotes the $i$-th diagonal element of the matrix $(\vec{\Sigma}+VB_{\vec{\nu}:\nu_i=0})^{-1}$.
\end{lem}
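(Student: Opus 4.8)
The plan is to read the ratio as the effect of a rank-one perturbation on a determinant. First I would observe that, since $B_{\vec{\nu}}$ is the diagonal matrix carrying $\nu_1,\cdots,\nu_m$, switching $\nu_i$ from $0$ to $1$ only adds $1$ to the $(i,i)$ entry, so writing $\vec{e}_i$ for the $i$-th standard basis vector of $\mathbb{R}^m$,
\[
  B_{\vec{\nu},\nu_i=1}=B_{\vec{\nu},\nu_i=0}+\vec{e}_i\vec{e}_i^{T},
  \qquad\text{whence}\qquad
  \Sigma+VB_{\vec{\nu},\nu_i=1}=A+V\vec{e}_i\vec{e}_i^{T},
\]
where $A:=\Sigma+VB_{\vec{\nu},\nu_i=0}$ is positive definite (a positive definite matrix plus a nonnegative diagonal matrix), hence invertible.

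The cleanest finish is the matrix determinant lemma $|A+\vec{u}\vec{v}^{T}|=|A|\,(1+\vec{v}^{T}A^{-1}\vec{u})$ with $\vec{u}=V\vec{e}_i$ and $\vec{v}=\vec{e}_i$, which gives $|\Sigma+VB_{\vec{\nu},\nu_i=1}|=|A|\,(1+V\,\vec{e}_i^{T}A^{-1}\vec{e}_i)$; since $\vec{e}_i^{T}A^{-1}\vec{e}_i$ is the $(i,i)$ entry of $(\Sigma+VB_{\vec{\nu},\nu_i=0})^{-1}$, namely $b_{ii}(\vec{\nu})$, dividing by $|A|\neq 0$ yields the claim. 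If one prefers a self-contained derivation that reuses the preceding lemma, I would instead take determinants in the identity of Lemma 5.1: multiplying it on the left by $A$ and using $A\vec{b_i}(\vec{\nu})=\vec{e}_i$ (as $\vec{b_i}(\vec{\nu})$ is the $i$-th column of $A^{-1}$) gives
\[
  A\,(\Sigma+VB_{\vec{\nu},\nu_i=1})^{-1}=I-\frac{V}{1+Vb_{ii}(\vec{\nu})}\,\vec{e}_i\,\vec{b_i}(\vec{\nu})^{T},
\]
whose determinant equals $1-\frac{Vb_{ii}(\vec{\nu})}{1+Vb_{ii}(\vec{\nu})}=\frac{1}{1+Vb_{ii}(\vec{\nu})}$; equating this with $|A|/|\Sigma+VB_{\vec{\nu},\nu_i=1}|$ and taking reciprocals gives the result.

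There is essentially no obstacle. The only points that merit a line of care are checking $B_{\vec{\nu},\nu_i=1}-B_{\vec{\nu},\nu_i=0}=\vec{e}_i\vec{e}_i^{T}$ from the diagonal structure of $B_{\vec{\nu}}$, recording invertibility of $A$, and noting the elementary identity $|I+\vec{u}\vec{v}^{T}|=1+\vec{v}^{T}\vec{u}$ (one line: $\vec{u}$ is an eigenvector with eigenvalue $1+\vec{v}^{T}\vec{u}$, and the hyperplane orthogonal to $\vec{v}$ is an eigenspace for eigenvalue $1$). The argument is therefore short and entirely elementary, and it mirrors the Sherman--Morrison-type step already carried out in Lemma 5.1.
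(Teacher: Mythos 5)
Your proof is correct and follows essentially the same route as the paper: the paper also reduces the ratio to $\lvert B_{1,\vec{\nu}}B_{0,\vec{\nu}}^{-1}\rvert=\lvert I+VB_{0,\vec{\nu}}^{-1}\,\mathrm{diag}(\vec{e}_i)\rvert$ and reads off $1+Vb_{ii}(\vec{\nu})$ from the fact that this matrix differs from the identity only in the $i$-th column, which is exactly the rank-one determinant update you invoke. Your packaging via the matrix determinant lemma (and the optional detour through Lemma 5.1) is just a cleaner statement of the same computation.
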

\begin{proof}
 See Appendix.
\end{proof}

\begin{lem}\label{LEM_ALT_REP_3}
For any arbitrary positive definite covariance matrix $\vec{\Sigma}$ and for any $\vec{\nu} \in \{0,1\}^{n}$, we have the following identity:
\begin{align}
 b_{ii}(\vec{\nu})
 &= \bigg[\sigma_{ii} - \vec{\sigma}^{T}_{(-i)}\big((\vec{\Sigma}+VB_{\vec{\nu}:\nu_i=0})_{(-i,-i)}\big)^{-1}\vec{\sigma}_{(-i)}\bigg]^{-1} \mbox{ and}\nonumber\\
 (\vec{b}_{i}(\vec{\nu}))_{(-i)}
 &= -b_{ii}(\vec{\nu})\big((\vec{\Sigma}+VB_{\vec{\nu}:\nu_i=0})_{(-i,-i)}\big)^{-1}\vec{\sigma}_{(-i)}.\nonumber
 \end{align}

Here the vector $\vec{b_i}(\vec{\nu})$ denotes the $i$-th column vector of the matrix $(\vec{\Sigma}+VB_{\vec{\nu}:\nu_i=0})^{-1}$ as already defined in Lemma \ref{LEM_ALT_REP_2} and $(\vec{b}(\vec{\nu}))_{(-i)}$ denotes the vector obtained from $\vec{b_i}(\vec{\nu})$ after removing its $i$-th coordinate. Moreover, $(\vec{\Sigma}+VB_{\vec{\nu}:\nu_i=0})_{(-i,-i)}$ is the sub-matrix obtained by removing the $i$-th row and the $i$-th column of $\vec{\Sigma}+VB_{\vec{\nu}:\nu_i=0}$.
\end{lem}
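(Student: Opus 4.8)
The plan is to derive both identities as a direct application of the standard formula for inverting a $2\times 2$ block‑partitioned matrix (the Schur‑complement formula), applied to $A := \Sigma + VB_{\vec{\nu},\nu_i=0}$. The key structural observation is that $B_{\vec{\nu}}$ is a diagonal matrix, so $VB_{\vec{\nu},\nu_i=0}$ perturbs only the diagonal entries of $\Sigma$; moreover the $(i,i)$ entry of that perturbation is forced to be $0$ by the constraint $\nu_i=0$. Hence the entire $i$‑th row and $i$‑th column of $A$ coincide with those of $\Sigma$. After moving index $i$ to the first coordinate by a symmetric permutation $P$, we may therefore write
$$PAP^{T} = \begin{pmatrix} \sigma_{ii} & \vec{\sigma}_{(-i)}^{T} \\ \vec{\sigma}_{(-i)} & A_{(-i,-i)} \end{pmatrix},$$
where $A_{(-i,-i)} = (\Sigma+VB_{\vec{\nu},\nu_i=0})_{(-i,-i)}$ and $\vec{\sigma}_{(-i)}$ is the $i$‑th column of $\Sigma$ with its $i$‑th coordinate removed. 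Note that $A_{(-i,-i)}$ is generally not $\Sigma_{(-i,-i)}$, since the other diagonal entries still carry their $V\nu_j$ corrections, but this causes no difficulty.

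Next I would invoke the block‑inversion identity. Since $\Sigma\succ 0$ and $VB_{\vec{\nu},\nu_i=0}\succeq 0$, the matrix $A$ is positive definite; hence $A_{(-i,-i)}$ is invertible and the Schur complement $s := \sigma_{ii} - \vec{\sigma}_{(-i)}^{T}A_{(-i,-i)}^{-1}\vec{\sigma}_{(-i)}$ is strictly positive, so $s^{-1}$ is well defined. The Schur‑complement formula then gives
$$(PAP^{T})^{-1} = \begin{pmatrix} s^{-1} & -s^{-1}\vec{\sigma}_{(-i)}^{T}A_{(-i,-i)}^{-1} \\ -s^{-1}A_{(-i,-i)}^{-1}\vec{\sigma}_{(-i)} & A_{(-i,-i)}^{-1} + s^{-1}A_{(-i,-i)}^{-1}\vec{\sigma}_{(-i)}\vec{\sigma}_{(-i)}^{T}A_{(-i,-i)}^{-1}\end{pmatrix}.$$
Reading off the $(1,1)$ entry yields $b_{ii}(\vec{\nu}) = s^{-1}$, which is exactly the first claimed identity, and reading off the first column below the diagonal yields $(\vec{b}(\vec{\nu}))_{(-i)} = -s^{-1}A_{(-i,-i)}^{-1}\vec{\sigma}_{(-i)} = -b_{ii}(\vec{\nu})\,A_{(-i,-i)}^{-1}\vec{\sigma}_{(-i)}$, the second identity. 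Undoing the permutation $P$ merely relabels coordinates and does not affect these scalar/vector quantities.

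I do not anticipate any genuine obstacle; the lemma is essentially a bookkeeping statement that repackages the Schur‑complement formula in the notation attached to the BSD statistics. The only points requiring (routine) care are: verifying that the $i$‑th row and column of $\Sigma+VB_{\vec{\nu},\nu_i=0}$ are unchanged from $\Sigma$, which is immediate from $B_{\vec{\nu}}$ being diagonal together with $\nu_i=0$; and checking that positive definiteness of $\Sigma+VB_{\vec{\nu},\nu_i=0}$ guarantees existence of all the inverses appearing above. The purpose of this lemma is purely to feed, together with Lemmas 5.1 and 5.2, into the recursive alternative representation of the $S_{tj}$'s that eliminates repeated submatrix inversions.
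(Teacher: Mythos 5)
Your proof is correct, and it takes a genuinely more direct route than the paper's. The paper establishes the first identity indirectly: it first writes the block-determinant (Schur-complement) factorizations of $|\Sigma+VB_{\vec{\nu},\nu_i=0}|$ and $|\Sigma+VB_{\vec{\nu},\nu_i=1}|$, notes that the two matrices share the same $(-i,-i)$ submatrix, forms the ratio of determinants, and then equates that ratio with the expression $1+Vb_{ii}(\vec{\nu})$ supplied by Lemma 5.2 to solve for $b_{ii}(\vec{\nu})$; the second identity is dismissed with ``the remaining part now follows trivially.'' You instead apply the block-inversion formula for $(PAP^{T})^{-1}$ once and read off both the $(1,1)$ entry and the first column, which proves the two claims simultaneously, makes the unproved ``trivial'' part explicit, and does not lean on Lemma 5.2 at all. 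Your preliminary observations --- that $\nu_i=0$ together with the diagonality of $B_{\vec{\nu}}$ leaves the $i$-th row and column of $A=\Sigma+VB_{\vec{\nu},\nu_i=0}$ equal to those of $\Sigma$ (so the off-diagonal block of the partition is $\vec{\sigma}_{(-i)}$ and the corner entry is $\sigma_{ii}$, even though $A_{(-i,-i)}\neq\Sigma_{(-i,-i)}$ in general), and that positive definiteness of $A$ guarantees invertibility of $A_{(-i,-i)}$ and strict positivity of the Schur complement --- are exactly the points that need checking, and you check them. The only thing your route forgoes is the incidental reconfirmation of Lemma 5.2 that the paper's determinant computation provides; conversely, your argument is self-contained and slightly strengthens the logical structure of Section 5, since Lemma 5.3 no longer depends on Lemma 5.2.
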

\begin{proof}
 See Appendix.
\end{proof}

\begin{lem}\label{LEM_ALT_REP_4}
 For any arbitrary positive definite covariance matrix $\vec{\Sigma}$ and for each $i=1,\dots,n$, we have
\begin{eqnarray}
\frac{f(\vec{x}|\nu_i=1,\vec{\nu}_{(-i)}=\vec{0})}{f(\vec{x}|\nu_i=0,\vec{\nu}_{(-i)}=\vec{0})}
&=& \frac{1}{\sqrt{1+Vb_{ii}}}\times\exp\bigg\{\frac{V}{2(1+Vb_{ii})} \bigg(\sum_{i=1}^{n}b_{ji}x_j\bigg)^2\bigg\} \nonumber
\end{eqnarray}
 where $\vec{b}_i=(b_{1i},\dots,b_{ni})^{T}$ denotes the $i$-th column vector of the precision matrix $\vec{\Sigma}^{-1}.$
\end{lem}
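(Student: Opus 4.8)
The plan is to identify both densities appearing in the ratio as multivariate normal densities by means of Theorem 2.1, and then to invoke Lemmas 5.1 and 5.2, specialized to the configuration of $\vec{\nu}$ in which only its $i$-th coordinate is switched on, so as to reduce the determinant factor and the exponent to the stated closed form.

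First I would observe that under the present prior, conditioning on $\nu_i = 0$ and $\vec{\nu}_{(-i)} = \vec{0}$ makes every $\mu_j$ degenerate at $0$, so by Theorem 2.1 we have $\vec{X}\mid(\nu_i=0,\vec{\nu}_{(-i)}=\vec{0})\sim N_m(\vec{0},\Sigma)$; conditioning on $\nu_i=1$ and $\vec{\nu}_{(-i)}=\vec{0}$ instead makes $\mu_i\sim N(0,V)$ and all other $\mu_j=0$, so by Theorem 2.1 we have $\vec{X}\mid(\nu_i=1,\vec{\nu}_{(-i)}=\vec{0})\sim N_m(\vec{0},\Sigma+V\vec{e}_i\vec{e}_i^{T})$, where $\vec{e}_i$ denotes the $i$-th standard basis vector, since the relevant matrix $B_{\vec{\nu}}$ equals $\vec{e}_i\vec{e}_i^{T}$ in this case. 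Writing out the two Gaussian densities, the ratio in the statement becomes
\[
\frac{|\Sigma|^{1/2}}{|\Sigma+V\vec{e}_i\vec{e}_i^{T}|^{1/2}}\exp\Big\{\tfrac12\vec{x}^{T}\Sigma^{-1}\vec{x}-\tfrac12\vec{x}^{T}(\Sigma+V\vec{e}_i\vec{e}_i^{T})^{-1}\vec{x}\Big\}.
\]

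Next I would specialize Lemmas 5.1 and 5.2 to this same $\vec{\nu}$. Since $B_{\vec{\nu},\nu_i=0}=\vec{0}$ here, one has $\Sigma+VB_{\vec{\nu},\nu_i=0}=\Sigma$ and $\Sigma+VB_{\vec{\nu},\nu_i=1}=\Sigma+V\vec{e}_i\vec{e}_i^{T}$; consequently the column $\vec{b}_i(\vec{\nu})$ of those lemmas is precisely the $i$-th column $\vec{b}_i=(b_{1i},\dots,b_{mi})^{T}$ of the precision matrix $\Sigma^{-1}$, and $b_{ii}(\vec{\nu})=b_{ii}$ is its $i$-th diagonal entry. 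Lemma 5.2 then gives $|\Sigma+V\vec{e}_i\vec{e}_i^{T}|/|\Sigma|=1+Vb_{ii}$, so the determinant prefactor is $(1+Vb_{ii})^{-1/2}=\sqrt{1/(1+Vb_{ii})}$; and the quadratic-form identity stated right after Lemma 5.1 gives
\[
\vec{x}^{T}(\Sigma+V\vec{e}_i\vec{e}_i^{T})^{-1}\vec{x}=\vec{x}^{T}\Sigma^{-1}\vec{x}-\frac{V}{1+Vb_{ii}}\big(\vec{b}_i^{T}\vec{x}\big)^{2},\qquad \vec{b}_i^{T}\vec{x}=\sum_{j=1}^{m}b_{ji}x_j.
\]

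Substituting both identities into the displayed ratio, the two copies of $\tfrac12\vec{x}^{T}\Sigma^{-1}\vec{x}$ cancel and the exponent collapses to $\frac{V}{2(1+Vb_{ii})}\big(\sum_{j=1}^{m} b_{ji}x_j\big)^2$, which is exactly the claimed expression. I do not anticipate a genuine obstacle in this argument; the one point needing care is the bookkeeping that, for the particular $\vec{\nu}$ with only coordinate $i$ active, $\Sigma+VB_{\vec{\nu},\nu_i=0}$ collapses to $\Sigma$, so that the generic objects $\vec{b}_i(\vec{\nu})$ and $b_{ii}(\vec{\nu})$ of Lemmas 5.1 and 5.2 reduce to the $i$-th column and $i$-th diagonal entry of $\Sigma^{-1}$ that feature in the statement. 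After that identification, the rest is a direct substitution.
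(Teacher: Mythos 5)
Your proposal is correct and follows essentially the same route the paper intends: its proof of this lemma is precisely "combine Lemmas 5.1--5.3," and you carry that out by identifying the two conditional densities via Theorem 2.1 and then specializing Lemmas 5.1 and 5.2 to the configuration with only $\nu_i$ active (noting, as the paper leaves implicit, that Lemma 5.3 is not actually needed once $\vec{b}_i(\vec{\nu})$ is recognized as the $i$-th column of $\Sigma^{-1}$). You also correctly read the index in the displayed sum as $\sum_{j=1}^{m} b_{ji}x_j$, fixing an evident typo in the statement.
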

\begin{proof}
 See Appendix.
\end{proof}

Observe that Lemma \ref{LEM_ALT_REP_1} - Lemma \ref{LEM_ALT_REP_4} have been derived by taking into consideration the full data vector $\vec{X}$ and the variance-covariance matrix $\vec{\Sigma}$. However, the same results also hold true for any arbitrary partition of $\vec{X}$ and the corresponding sub-matrix of $\vec{\Sigma}$. This leads us to the following alternative representation of the BSD statistics $S_{tj}$'s as presented in Theorem \ref{THM_ALTERNATIVE_REPRESENTATION} below.\newline

\begin{thm}\label{THM_ALTERNATIVE_REPRESENTATION}
 For each step $t$ of the Bayesian Step Down procedure, the statistics $S^{(i_1,\dots,i_{t-1})}_{tj}$ can equivalently be represented as,
 \begin{eqnarray}
 S^{(i_1,\dots,i_{t-1})}_{tj} (\vec{x})
 &=& \frac{p(1-p)^{-1}}{\sqrt{1+Vb^{(i_1,\dots,i_{t-1})}_{jj}}} \times \exp\bigg\{\frac{V\bigg(\sum\limits_{k}b^{(i_1,\dots,i_{t-1})}_{kj}x^{(i_1,\dots,i_{t-1})}_k\bigg)^2}{2(1+Vb^{(i_1,\dots,i_{t-1})}_{jj})} \bigg\}\nonumber
 \end{eqnarray}
 where $\vec{b}^{(i_1,\dots,i_{t-1})}_j$ denotes the $\beta^{th}_j$ column vector of the matrix $\vec{\Sigma}^{-1}_{(i_1,\dots,i_{t-1})}$, $\beta_j$ being the position of $X_j$ among the remaining $X_i$'s after having left $X_{i_1},\dots,X_{i_{t-1}}$ and the summation within the square in the exponent of the right hand side being taken over the appropriate set of indices.
\end{thm}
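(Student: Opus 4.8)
The plan is to observe that, up to the prior-odds constant $p/(1-p)$, the statistic $S^{(i_1,\cdots,i_{t-1})}_{tj}(\vec{x})$ is a likelihood ratio of exactly the form already evaluated in Lemma 5.4, only with the full data vector $\vec{X}$ replaced by the reduced vector $\vec{X}^{(i_1,\cdots,i_{t-1})}$. I would start from the decomposition established inside the proof of Theorem 3.1,
\[
S^{(i_1,\cdots,i_{t-1})}_{tj}(\vec{X}) \;=\; \frac{p}{1-p}\cdot\frac{f\big(\vec{X}^{(i_1,\cdots,i_{t-1})}\mid\nu_j=1,\ \vec{\nu}^{(i_1,\cdots,i_{t-1},j)}=\vec{0}\big)}{f\big(\vec{X}^{(i_1,\cdots,i_{t-1})}\mid\nu_j=0,\ \vec{\nu}^{(i_1,\cdots,i_{t-1},j)}=\vec{0}\big)},
\]
which follows from the definition (\ref{BSD_STAT_DEFN}) after cancelling the common normalising constant of the two posteriors and factoring out the prior probabilities, since the prior odds of the two relevant configurations of $(\nu_1,\cdots,\nu_m)$ equals $p/(1-p)$.

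Next I would identify the conditional law of the reduced vector. Marginals of a jointly Gaussian vector are again jointly Gaussian with covariance matrix equal to the corresponding principal submatrix, so from $\vec{X}\mid\vec{\nu}\sim N_m(\vec{0},\Sigma+VB_{\vec{\nu}})$ of Theorem 2.1 one gets that $\vec{X}^{(i_1,\cdots,i_{t-1})}$, conditionally on the coordinates $\{\nu_k:k\notin\{i_1,\cdots,i_{t-1}\}\}$, is $N_{m-t+1}\big(\vec{0},\ \Sigma_{(i_1,\cdots,i_{t-1})}+V B^{\star}\big)$, where $B^{\star}$ is the diagonal matrix carrying the entries $\nu_k$ of the surviving indices; in particular this conditional density does not involve $\nu_{i_1},\cdots,\nu_{i_{t-1}}$, so the two densities in the display above are genuine Gaussian densities in $m-t+1$ variables. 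Under the configuration $\vec{\nu}^{(i_1,\cdots,i_{t-1},j)}=\vec{0}$ with $\nu_j=\ell\in\{0,1\}$, the matrix $B^{\star}$ is exactly $\ell$ times the diagonal matrix having a single $1$ in the slot $\beta_j$ that $X_j$ occupies in the reduced vector — that is, it coincides with the perturbation matrix appearing in Lemmas~5.1$-$5.4 when those are read for the $(m-t+1)\times(m-t+1)$ matrix $\Sigma_{(i_1,\cdots,i_{t-1})}$ with the role of the index $i$ played by $\beta_j$. Being a principal submatrix of the positive definite matrix $\Sigma$, the matrix $\Sigma_{(i_1,\cdots,i_{t-1})}$ is itself positive definite, so Lemma 5.4 does apply to it.

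Finally I would invoke Lemma 5.4 with $\Sigma$ there replaced by $\Sigma_{(i_1,\cdots,i_{t-1})}$ and the index $i$ replaced by $\beta_j$, whereby the column $\vec{b}_i$ of the precision matrix becomes the $\beta_j$-th column $\vec{b}^{(i_1,\cdots,i_{t-1})}_j$ of $\Sigma^{-1}_{(i_1,\cdots,i_{t-1})}$ and $b_{ii}$ becomes $b^{(i_1,\cdots,i_{t-1})}_{jj}$; the ratio of the two conditional densities then equals
\[
\big(1+Vb^{(i_1,\cdots,i_{t-1})}_{jj}\big)^{-1/2}\exp\Big\{\tfrac{V}{2\,(1+Vb^{(i_1,\cdots,i_{t-1})}_{jj})}\Big(\textstyle\sum_k b^{(i_1,\cdots,i_{t-1})}_{kj}\,x^{(i_1,\cdots,i_{t-1})}_k\Big)^{2}\Big\},
\]
and multiplying through by $p/(1-p)$ gives the asserted representation. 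I do not expect a genuine obstacle here: the argument is essentially Lemma 5.4 applied to a subproblem, and the only points deserving care are notational — keeping the original label $j$ distinct from the positional index $\beta_j$ within $\vec{X}^{(i_1,\cdots,i_{t-1})}$, and the (immediate) observation that striking out rows and columns of a diagonally perturbed covariance matrix again produces a diagonally perturbed covariance matrix, so that no mixing over the discarded $\nu_{i_1},\cdots,\nu_{i_{t-1}}$ intervenes.
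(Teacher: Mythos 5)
Your proposal is correct and follows the same route as the paper, which simply declares Theorem 5.1 an immediate consequence of Lemma 5.4; you have merely made explicit the steps the paper leaves implicit (the prior-odds/likelihood-ratio factorisation from Theorem 3.1, the fact that the reduced vector's conditional law is $N(\vec{0},\Sigma_{(i_1,\cdots,i_{t-1})}+VB^{\star})$ and does not involve the discarded $\nu$'s, and the relabelling $i\mapsto\beta_j$ when invoking Lemma 5.4 on the positive definite submatrix). Nothing further is needed.
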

\begin{proof}
 See Appendix.
\end{proof}
 
As an immediate consequence of Theorem \ref{THM_ALTERNATIVE_REPRESENTATION}, it follows that, for each $t=1,\dots,n$, at the $t$-th stage of the BSD procedure, we do not need to enumerate anymore the $(n-t+1)$ many sub-matrices of the form $\vec{\Sigma}^{-1}_{(i_1,\dots,i_{t-1})}$. Observe that, we also do not need to compute the ratios of $(n-t+1)$ many determinants described in (\ref{RATIO_DET}) at each stage $t$. Instead, Theorem \ref{THM_ALTERNATIVE_REPRESENTATION} says that now we only need to compute the inverse of $\vec{\Sigma}_{(i_1,\dots,i_{t-1})}$ whose column vectors will be used for computing the terms in the exponents of the $(n-t+1)$ statistics $S^{(i_1,\dots,i_{t-1})}_{tj}(\vec{x})$'s and the diagonal components of $\vec{\Sigma}_{(i_1,\dots,i_{t-1})}$ can be used at once instead of evaluating the $(n-t+1)$ many ratios of determinants already described before. Thus, the overall BSD procedure becomes computationally much faster compared to its original formulation. To elucidate this point, consider, for example, the situation when the testing procedure continues till the $n$-th stage. In that case, if one uses the original definition of the BSD statistics $S^{(i_1,\dots,i_{t-1})}_{tj} (\vec{x})$ as given in (\ref{BSD_STAT_DEFN}), then one has to compute the inverses of $n+(n-1)+\dots+1=\frac{n(n+1)}{2}$ many sub-matrices of $\vec{\Sigma}$ and an equal number of ratios of determinants of the form (\ref{RATIO_DET}). However, Theorem \ref{THM_ALTERNATIVE_REPRESENTATION} says that we now need to compute only the inverses of $n$ many sub-matrices of $\vec{\Sigma}$ and that's all what we need. Rest of the computation becomes immediate on modern computing platforms, even if $n$ is large. It should further be noted that using the results of \ref{LEM_ALT_REP_1} - Lemma \ref{LEM_ALT_REP_4} as well as Theorem \ref{THM_ALTERNATIVE_REPRESENTATION}, the corresponding MRD statistics $U_{tj}$'s can be rewritten as
\begin{equation}
 U^{(i_1,\dots,i_{t-1})}_{tj} (\vec{x})
 =\sum\limits_{k} b^{(i_1,\dots,i_{t-1})}_{kj}x^{(i_1,\dots,i_{t-1})}_k,\nonumber
\end{equation}
where the corresponding terms within the above summation have already been defined before. Thus, our preceding discussion clearly shows that the same computational savings can also be realized through the above alternative representation of the MRD statistics which would make it computationally much faster compared to its original formulation as in \cite{COHEN_SACK_XU_2009}.
% % % % 
% % % % 
% % % % the overall computational complexity is of the order of $n+(n-1)+\dots+1=\frac{n(n+1)}{2}$. On the contrary, the computational complexity corresponding to the alternative representation as given in Theorem \ref{THM_ALTERNATIVE_REPRESENTATION} is simply of the order of $n$. This provides a computational savings of the order of $O(\frac{n(n-1)}{2})$ which is a huge number particularly if $n$ is the range of tens of thousands. 
\section{Simulations}
In this section, we present and interpret the results of our simulation study. The major objective of this study is to compare empirically the performance of the proposed BSD procedure with some well known multiple testing procedures available in the literature for the multiple testing problem (\ref{TESTING_PROBLEM_EQUIV}). Towards that, we assume the data to be generated from the $2^{n}-$component mixture of multivariate normal distributions (\ref{MARGINAL_DISTRIBUTION_X}). Our main objective would be to compare the simulated averages of the proportion of misclassified hypotheses (as estimate of the misclassification probability) and the simulated averages of the proportion of true discoveries among all the discoveries (as estimate of the power) of these testing rules with that of the BSD procedure. We consider in our study four widely used choices of $\vec{\Sigma}$ which cover various strong and weak correlation structures. Our choices of $\vec{\Sigma}$ shall be described in detail shortly. For our simulation, we generate the data by taking the sparsity parameter $p$ from the set $S=\{0.01, 0.06, 0.11, \dots, 0.96\}$, with a common lag of $0.05$ between two consecutive values of $p$. We further take $V=10$ and generate an $n$-dimensional multivariate normal vector $\vec{X}=(X_1,\dots,X_n)$ according to the mixture distribution (\ref{MARGINAL_DISTRIBUTION_X}) with $n=200$. We replicate this experiment $2000$ times for each $p$ and each $\vec{\Sigma}$. For the purpose of comparison, in our simulation study, six other multiple testing procedures apart from the proposed BSD method are considered. These are $(i)$ the step-up testing procedure due to \cite{BH1995} (abbreviated as ``BH''), $(ii)$ the step-down analogue of the BH method proposed by \cite{SKS_2002} (abbreviated as ``SDS''), $(iii)$ the fixed threshold approach due to \cite{STO_2002} (abbreviated as ``STO''), $(iv)$ the MRD testing procedure of \cite{COHEN_SACK_XU_2009}, $(v)$ the marginal testing procedure considered by \cite{XCML_2011} (abbreviated as ``MGN'') and $(vi)$ the Bayesian step-up testing procedure proposed by \cite{XCML_2011} (abbreviated as ``XCML''). It may be recalled here that the optimal Bayes testing procedure and the step-down procedure due to \cite{CHEN_SARKAR_2004} will be computationally very demanding in this context. We therefore omit them from our comparisons. \newline
% % % We compute the $p$-values $P_i=2(1-\Phi(|X_i|))$ based on the marginal distribution of $X_i$, for each $i=1,\dots,n$. 

We now describe the four different correlation structures used in our simulation study as follows:\newline

 \begin{enumerate}
  \item {\it Intraclass correlation structure}:\newline
% %   Let $\vec{\Sigma}$ denote the covariance matrix corresponding to an intraclass correlation model, with a common correlation coefficient  that is,
  \begin{eqnarray}
   \sigma_{ij}&=& 1\mbox{ if } i=j\nonumber\\
              &=& 0.5 \mbox{ if } i\neq j.\nonumber
  \end{eqnarray}
% % %   with $\rho=0.5$.
 
 \item {\it Block dependence structure}:\newline
% %  Suppose $\vec{\Sigma}$ is a block diagonal matrix with block size 10 and a common correlation coefficient $\rho=0.5$ within each block, that is,
  \begin{eqnarray}
   \sigma_{ij} &=& 1 \mbox{ if } i = j \nonumber\\
               &=& 0.5 \mbox{ if } 1\leqslant |i-j| \leqslant 10 \nonumber\\
               &=& 0 \mbox{ otherwise.}\nonumber
  \end{eqnarray}
% % %   $\rho=0.5$.
  
  \item {\it Autoregressive covariance structure}:\newline
  
  Let $\vec{\Sigma}$ be a Toeplitz matrix with the autoregressive correlation structure of an $AR(1)$ process with 
  \begin{eqnarray}
   \sigma_{ij} &=& 1 \mbox{ if } i=j,\nonumber\\
	       &=& 0.7^{|i-j|} \mbox{ for } i \neq j.\nonumber
  \end{eqnarray}
% % %   
% % %   The above matrix is often referred to as a Toeplitz matrix with elements of the form $\rho^{|i-j|}$, for $1\leqslant i, j \leqslant n$.
  
  \item {\it Short range dependence structure}:\newline
  
  Suppose $\vec{\Sigma}$ corresponds to the covariance structure involved with a moving average process with lag 1 ($MA(1)$) where
  \begin{eqnarray}
   \sigma_{ij} &=& 1\mbox{ if } i=j,\nonumber\\
	       &=& -0.3 \mbox{ if } |i-j|=1,\nonumber\\
	       &=& 0 \mbox{ otherwise}.\nonumber
  \end{eqnarray}
   \end{enumerate}
   
 Note that among the four different choices of $\vec{\Sigma}$, the first three presents strong to moderately strong correlation structures. The last one corresponds to a situation when the $X_i$'s are weakly correlated among themselves. The reason for considering such choices is to demonstrate how the performances of the testing procedures under study vary as the correlation between the test statistics gets weaker. We set $\alpha=0.1$ as our frequentist tolerance level of type I error. For the MRD testing procedure, we choose a decreasing sequence of critical constants $C_{1} \geqslant\dots\geqslant C_{n} >0$, where for the first three models we take $C_{1}=\Phi^{-1}(1-\frac{\alpha}{2n})$ and $C_{i}=0.71\Phi^{-1}(1-\frac{\alpha}{2(n-i+1)})$, $i=2,\dots,n$, while for the weak dependence model, we choose $C_{1}=\Phi^{-1}(1-\frac{\alpha}{2n})$ and $C_{i}=0.63\Phi^{-1}(1-\frac{\alpha}{2(n-i+1)})$, $i=2,\dots,n$, as prescribed in \cite{COHEN_SACK_XU_2009}. We choose the thresholding constant $\delta$ to be $1$ for the implementation of the BSD and the marginal (MGN) testing procedures.\newline
  
\begin{figure}[ht]
        \begin{center}
                \includegraphics[width=14cm,height=10cm]{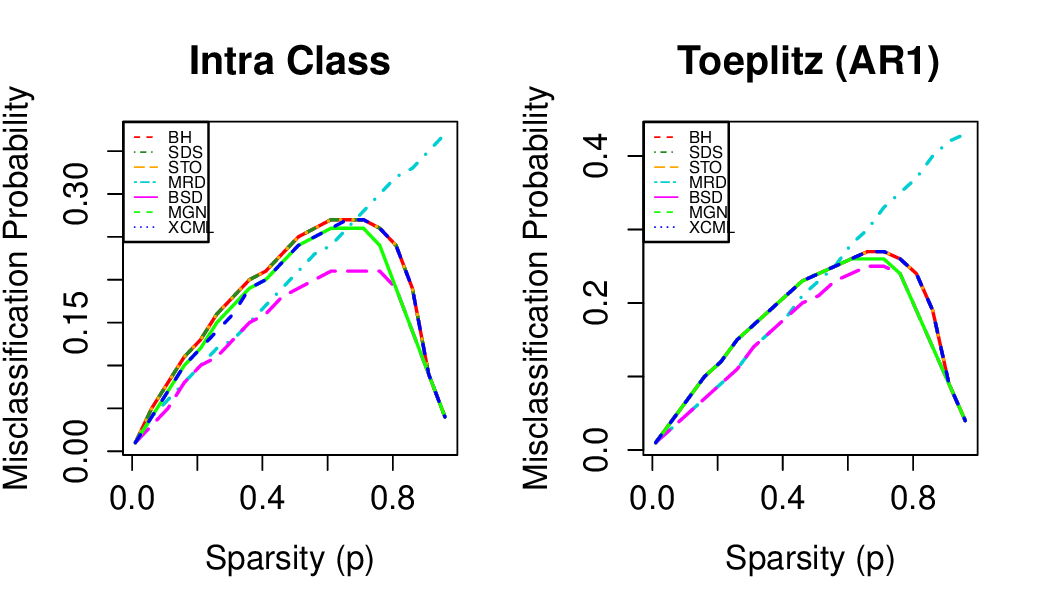}
                \caption{Comparison of estimated misclassification probabilities the intra class correlation and the AR(1) covariance structures}
                \label{FIG_MP_1}
        \end{center}
\end{figure}

\begin{figure}[ht]
        \begin{center}
                \includegraphics[width=14cm,height=10cm]{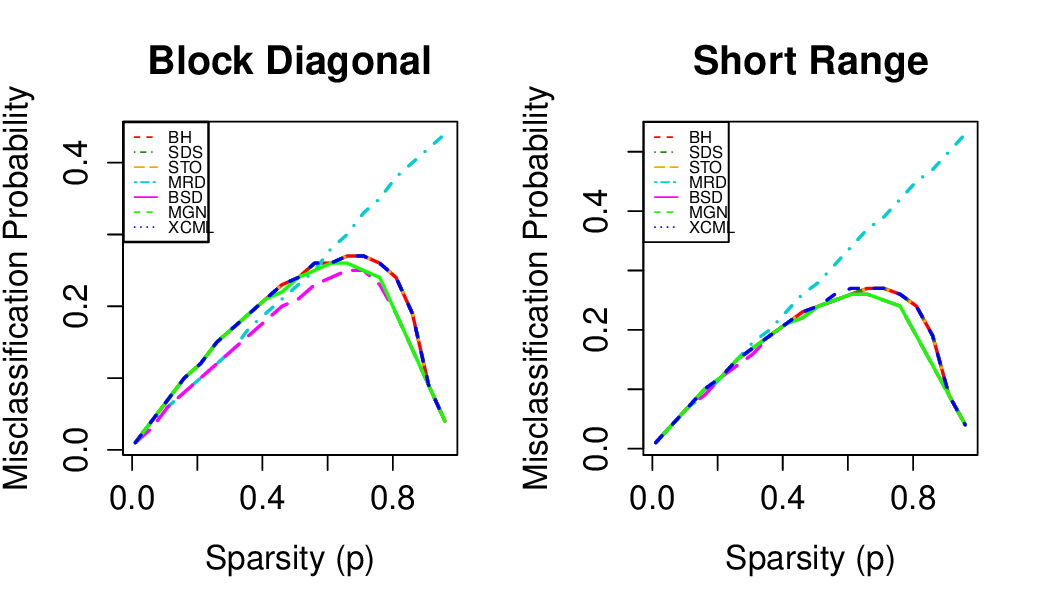}
                \caption{Comparison of estimated misclassification probabilities for the block diagonal and short range dependence (of order 2) covariance structures}
                \label{FIG_MP_2}
        \end{center}
\end{figure}

We now present the results obtained in our simulation study. Figure \ref{FIG_MP_1} presents the estimated misclassification probabilities of the various multiple testing procedures for the intraclass correlation and the $AR(1)$ autoregressive correlation structures. Figure \ref{FIG_MP_2} presents the corresponding probabilities of misclassification under the block dependence and the short range dependence models. The first thing to be noted from figures \ref{FIG_MP_1} and \ref{FIG_MP_2} is that, the BSD procedure uniformly dominates all other procedures in terms of the misclassification probability across the entire range of the sparsity parameter $p$ and for all choices of $\vec{\Sigma}$ considered in our study. In cases, where the correlations among the $X_{i}$'s are strong or moderately strong, the estimated misclassification probability corresponding to the proposed BSD procedure is considerably smaller as compared to the other testing procedures, over a wide range of the sparsity parameter $p$, particularly for values of $p$ smaller than $0.7$. However, for values of $p$ beyond this range, the performance of the MGN procedure almost coincides with that of the BSD procedure. A similar phenomena can also be observed for other testing procedures except the MRD method. The MRD method shows a significantly different behavior in terms of the estimated misclassification probability. For values of $p$ smaller than $0.4$, the performance of the MRD method is at least as good as the BSD method, although the BSD is still marginally better. As $p$ increases further, the misclassification probability of the MRD procedure continues to increase, while that for the BSD procedure initially increases at a much slower rate, and after a while, it continually decreases as $\vec{\theta}$ becomes more and more dense. Similar phenomena have also been observed whenever the dependence between $X_i$'s is strong or moderately strong for other choices of $\vec{\Sigma}$, which is not presented in this paper for reasons of space. When the correlation between $X_i$'s is weak as in the case of short range dependence structure, all the procedures under study tend to have similar misclassification probabilities as the BSD procedure for sparse to moderately sparse situations, specifically, when $p$ is smaller than $0.35$. In this case also, the misclassification probability of the MRD procedure tends to show a steady increase as $p$ increases, while the misclassification probabilities of the other testing procedures tend to be close to that of the BSD method.\newline

% % % It would be noteworthy that in each of the four choices of $\vec{\Sigma}$, the misclassification probability of the BSD procedure always remain the smallest across the entire range of the sparsity parameter $p$. We would like to emphasize here that, for dimensions like $n=100, 200, 400$, and for any arbitrary choice of $\vec{\Sigma}$ when it is positive definite, similar empirical feature of the misclassification probability of the BSD procedure have also been observed. But, as mentioned earlier already that due to reasons of space, it is not feasible to include the results obtained from all such comparisons. 

\begin{figure}[ht]
        \begin{center}
                \includegraphics[width=14cm,height=10cm]{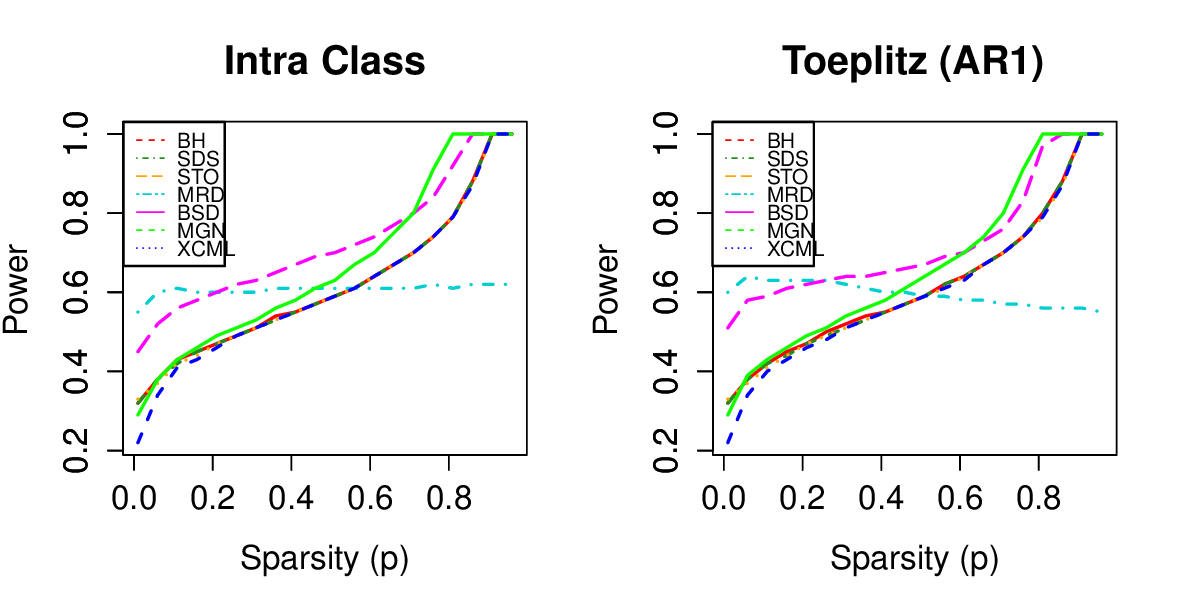}
                \caption{Comparison of estimated powers for the intra class correlation and the AR(1) covariance structures}
                \label{FIG_POWER_1}
        \end{center}
\end{figure}

\begin{figure}[ht]
        \begin{center}
                \includegraphics[width=14cm,height=10cm]{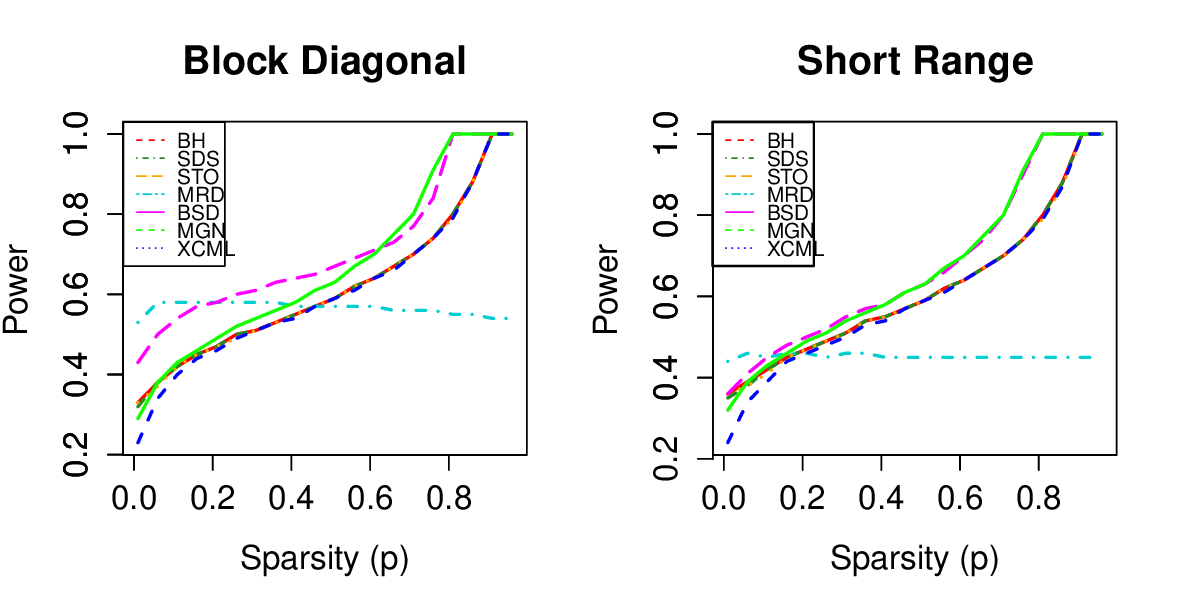}
                \caption{Comparison of estimated powers for the block diagonal and short range dependence (of order 2) covariance structures}
                \label{FIG_POWER_2}
        \end{center}
\end{figure}

We next compare the powers of the multiple testing procedures under study together with their estimated FDRs for the multiple testing problem (\ref{TESTING_PROBLEM_EQUIV}). Figures \ref{FIG_POWER_1} through \ref{FIG_FDR_2} below present the estimated powers and the false discovery rates of the multiple testing procedures under study for each value of the sparsity parameter $p$ in the set $S$ under different choices of $\vec{\Sigma}$. Figures \ref{FIG_POWER_1} and \ref{FIG_POWER_2} clearly demonstrate that when the $X_i$'s have a strong to moderately strong association among themselves, the proposed BSD procedure has a significantly larger power as compared to its competitors over a wide range of values of the sparsity parameter $p$, covering both the sparse to moderately non-sparse cases. However, as $p$ becomes larger, the marginal oracle testing procedure (MGN) tends to yield a larger power compared to the proposed BSD procedure. But this comes at the expense of considerably larger number of false discoveries made by the MGN method compared to the BSD procedure which is evident from figures \ref{FIG_FDR_1} and \ref{FIG_FDR_2}. The MRD procedure again has a different behavior in terms of its power. For strong to moderately strong correlations among the $X_{i}$'s and values of $p$ smaller than $0.2$, the MRD method yields a higher power compared to the proposed BSD procedure. This comes at the cost producing a significantly larger number of false discoveries made by the MRD method as compared to the BSD procedure, see figures \ref{FIG_POWER_1} and \ref{FIG_POWER_2}. Even when $p$ gets larger, power of the MRD method remains more or less the same and it tends to be more and more conservative as $p$ gradually approaches $1$. A different story emerges from the results under the short range dependence covariance structure. In this case, the marginal testing procedure (MGN) tends to yield a power close to the BSD method.\newline

It should be noted that the Bayesian testing procedures like the BSD method or even the optimal Bayes procedure, are typically not aimed at controlling some specific kind of type I error measure such as the FDR. In our present simulation study, we have taken $\alpha$ (the frequentist tolerance level of type I error) to be $0.1$. When the correlation among $X_i$'s are strong or moderately strong, the FDR of the proposed BSD method is also controlled at this level, provided $\vec{\theta}$ is sparse, that is, when $p$ is small. However, the same may not be true when $X_i$'s are weakly correlated among themselves. \newline
  
\begin{figure}[ht]
        \begin{center}
                \includegraphics[width=14cm,height=10cm]{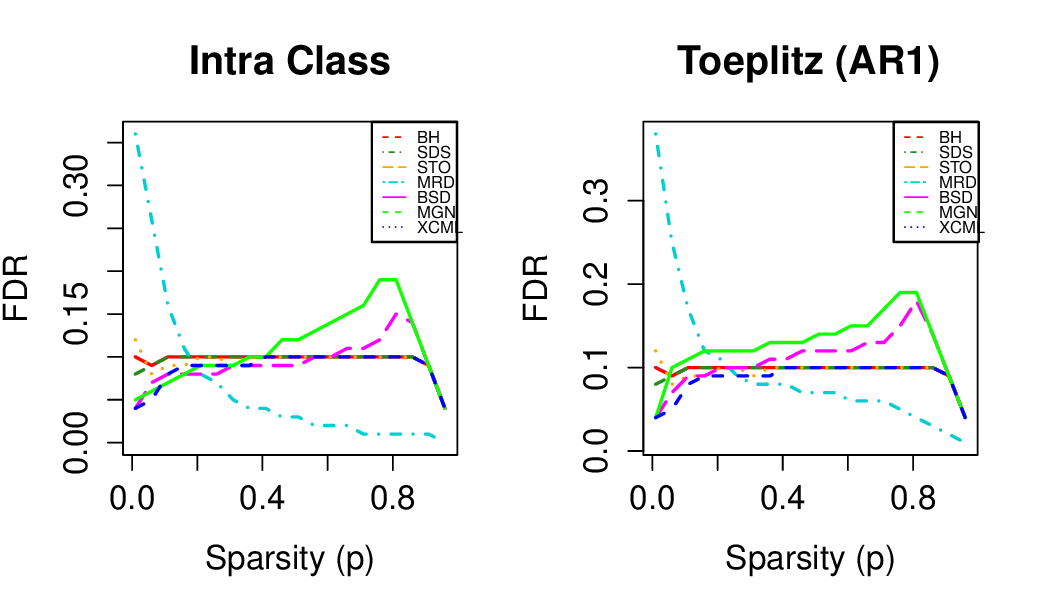}
                \caption{Comparison of estimated FDR for the intra class correlation and the AR(1) covariance structures}
                \label{FIG_FDR_1}
        \end{center}
\end{figure}

In summary, it may be said that when the association between $X_i$'s is strong or even moderately strong, the proposed BSD method, tends to outperform procedures like the BH method, the SDS method, the STO method, the MGN method and the XCML method both in terms of the estimated misclassification probability and power for a wide range of sparsity level. It also shows a decent FDR controlling property when the underlying mean vector is sparse and $X_i$'s have a strong or moderately strong association among themselves. We feel that the reasons behind the good performance of the BSD method and the MRD method can satisfactorily be explained by the following facts. First, the marginal testing procedures do not take into account the correlation among the $X_i$'s, whereas procedures like the BSD and the MRD methods fully utilizes the dependence among $X_i$'s at each step. This clearly demonstrates the effect of taking correlations into account for developing a multiple testing procedure when test statistics are correlated. Second, as already shown by \cite{COHEN_KOL_SACK_2007}, \cite{COHEN_SACK_2005_B}, \cite{COHEN_SACK_2007}, \cite{COHEN_SACK_2008} and \cite{COHEN_SACK_XU_2009} that in simultaneous testing problems involving dependent normal means, typical stepwise testing procedures are inadmissible with respect to the vector loss function, and hence, with respect to the additive loss function as well. In our context, the estimated misclassification probability is proportional to the simulated average of the total number of misclassified hypotheses which is nothing but the usual additive $0-1$ loss function within our chosen two-groups formulation. In this sense, the present simulation study is in concordance with the theoretical findings of the aforesaid papers. However, as the correlation becomes weaker and weaker, these differences tend to fade way which is understandable since in case of {\it zero} correlation (that is, under the assumption of {\it independence}), it is known that procedures like the BH method is asymptotically Bayes optimal under sparsity (ABOS) as shown in \cite{BCFG2011} and the proposed BSD method is simply the optimal Bayes decision rule in that context. The above discussion suggests that except under certain asymptotically vanishing correlation structures, the conjecture made in \cite{BCFG2011} regarding such asymptotic Bayes optimality property of the BH procedure in sparse problems and under general dependence structures, is not likely to be true.
%%%many commonly occurring situations where the problem of interest is to test simultaneously the individual components of a multivariate normal mean vector, 
\begin{figure}[ht]
        \begin{center}
                \includegraphics[width=14cm,height=10cm]{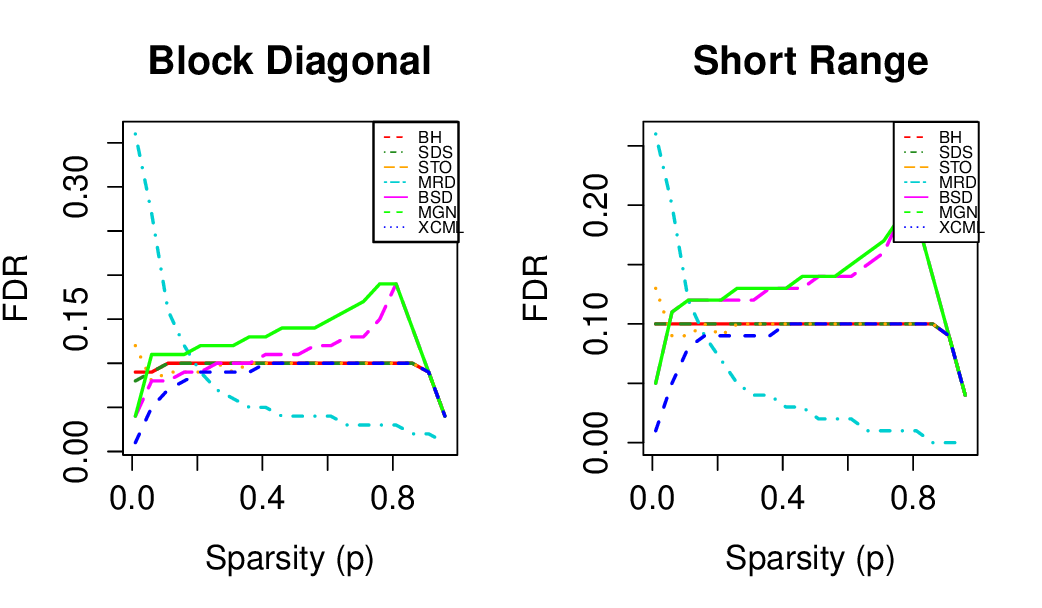}
                \caption{Comparison of estimated FDR for the block diagonal and short range dependence (of order 2) covariance structures}
                \label{FIG_FDR_2}
        \end{center}
\end{figure}

\section{Concluding Remarks}
We considered in this paper, the problem of simultaneous significance testing of the individual components of a multivariate normal mean vector when the underlying covariance matrix is assumed to be known, but arbitrary. We took a Bayesian approach where a two-component point mass mixture prior was used within a hierarchical Bayes framework to model the unknown means. Under this set up, we proposed a novel Bayesian testing procedure that works in a step-down manner and is referred to as the Bayesian Step-down (BSD) procedure. The proposed Bayesian step-down procedure is easy to implement and can fully incorporate the dependence between the test statistics at every stage unlike many other popular multiple testing approaches. Moreover, the proposed methodology provides a generic multiple testing algorithm which can be applied even for the non-normal models such as multivariate-$t$. We also established a formal decision theoretic justification in favor of our proposed testing procedure when the test statistics are assumed to come from a multivariate normal distribution with an unknown but fixed mean vector and a known positive definite covariance matrix. In particular, we employed a general technique invoking some novel arguments which shows that the proposed BSD procedure possesses a certain convexity property which is both necessary and sufficient for a multiple testing procedure to be admissible with respect to the vector loss function (\ref{VECTOR_LOSS}) for the testing problem (\ref{TESTING_PROBLEM}). As a matter of fact, using our general scheme of arguments it turned out that, any step-down multiple testing procedure based on a set of statistics which are non-decreasing functions of the absolute values of the corresponding MRD statistics, will also be admissible under the vector loss function (\ref{VECTOR_LOSS}). To the best of our knowledge, the aforesaid fact is new in the multiple testing literature and extends the results of \cite{COHEN_SACK_XU_2009} on the admissibility property of their proposed MRD method. We established an alternative representation of the proposed test statistics which leads to a great amount of computational savings for the implementation of our proposed methodology. We also demonstrated through extensive simulation study that, for various forms of dependence and across a wide range of sparsity levels, the proposed testing procedure compares quite favorably with several existing multiple testing procedures available in the literature in terms of overall misclassification probability and power.\newline
% % % % 
% % % % Our simulation study showed that for various strong and moderately strong correlation structures and across a wide range of sparsity levels, the proposed testing procedure has considerably smaller Bayes misclassification risk together with an enhanced power in comparison to some well known multiple testing procedures available in the literature. It also shows that even if the correlation is weak, our proposed testing procedure 
% % % % %%%%%testing against the two-sided alternatives as in under the $2^{n}$-component mixture of multivariate normals (\ref{MARGINAL_DISTRIBUTION_X})

It is important to note that, implementation of the BSD method in real life applications requires the knowledge of the proportion of true alternatives $p$ and the variance $V$ of the distribution of the non-null $\theta_i$'s which may not always be known in practice. One natural approach in such cases would be to to assign some appropriate hyperpriors to $p$ and $V$, and subsequently employing some efficient Markov Chain Monte Carlo (MCMC) algorithm to find estimates of the posterior probabilities used to define the BSD statistics in (\ref{BSD_STAT_DEFN}). An alternative approach in this context could as well be the use of an empirical Bayes version of the BSD procedure by simply plugging the full Bayes estimates of $p$ and $V$ into the functional relation (\ref{BSD_MRD_RELATION}) of Theorem \ref{THM_BSD_MRD_CONNECTION}. It should, however, be noted that use of an MCMC algorithm in the present case may lead to a daunting computational task since it requires exploration over an enormously large model space of size $2^{n}$. As a result, the corresponding MCMC algorithm may suffer from very low transitional probabilities to move from one model to another and become too slow to converge. However, the difficulty concerning the implementation of such MCMC algorithms can easily be bypassed by using the functional relation (\ref{BSD_MRD_RELATION}) as follows. Let $\widehat{p}$ and $\widehat{V}$ be some ``good'' estimates of $p$ and $V$, respectively, obtained through some empirical Bayes approach or otherwise. Then, by simply plugging those estimates in (\ref{BSD_MRD_RELATION}), one can directly enumerate the BSD statistics, and thus avoid the need for MCMC-type computations for the present testing problem. The question that naturally arises then is what could possibly be the appropriate choices of $\widehat{p}$ and $\widehat{V}$ in this context.\newline

%%%%%Hence, we need to find some estimates of these two quantities in these contexts.   using ,functional implementation of the BSD procedure by employing a full Bayes approach can be very demanding computationally particularly if the number of tests $n$ is large.and fail to yield reasonably accurate estimates of the aforesaid posterior probabilities. Therefore, finding full Bayes estimates of the posterior probabilities used in the definition of the BSD statistics may lead to a daunting computational task, even if $n$ is moderately large. 

Estimation of the theoretical proportion of true alternatives or the proportion of non-null effects $p$ has been so far another topic of intense research in the multiple testing literature. There are contexts such as in astronomy where one might be more interested in estimating the proportion of true signals rather than identifying them individually. Moreover, by incorporating such estimate, the efficiency of traditional FWER or FDR controlling procedures like the BH method, can greatly be improved in terms of power. Several strategies have been proposed in the literature towards estimation of this proportion $p$. Some important early references in this regard include \cite{BH2000}, \cite{Efron2004}, \cite{ETST_2001}, \cite{GW_2004}, \cite{MR2006}, \cite{STO_2002} and \cite{STS_2004}, among others. However, the corresponding estimates of $p$ proposed in these works are, in general, inconsistent and tend to be conservative in nature. A major theoretical breakthrough in this direction was made in \cite{JIN_2006} when the underlying test statistics are assumed to be independent and identically distributed ({\it i.i.d.}) random variables generated from a two-component Gaussian mixture model. \cite{JIN_2006} proposed to estimate the corresponding mixing proportion $p$ by exploiting certain concepts from Fourier analysis. His proposed estimator is based on the central idea of approximating, what he called {\it the underlying characteristic function}, by the corresponding empirical characteristic function when the null parameter values are identical or {\it homogeneous}. He showed that the aforesaid estimator of $p$ is uniformly consistent over a large parameter space. Details of the construction of such estimates can be found in \cite{JIN_2006} and \cite{JIN_2008}. \cite{JIN_CAI_2007} extended these ideas to obtain consistent estimators of the null parameters values along with the proportion of non-null effects when the null distributions are assumed to be unknown and the null parameters are {\it heterogeneous}. Their estimators were shown to be consistent over a large parameter space and also in situations when the test statistics exhibit certain forms of dependence such as $\alpha$-mixing and short range dependence. The aforesaid estimators, though consistent, fail to attain any optimal rate of convergence. \cite{CAI_JIN_2010} considered the problem of finding consistent estimators of the null density and the proportion of non-null effects $p$ which attain the corresponding minimax error rates (with respect to appropriately chosen loss functions) up to some multiplicative factors within an {\it i.i.d.} Gaussian mixture model framework. For any fixed $\gamma \in (0,1/2)$, they proposed the following estimator of $p$, given by
 \begin{equation}\label{p_hat_definition}
  \widehat{p}(\gamma)= 1- \frac{1}{n^{1-\gamma}} \sum_{j=1}^{n}\cos\big(\sqrt{2\gamma\log n}X_j\big).
 \end{equation}

 \cite{CAI_JIN_2010} showed that $\widehat{p}(\gamma)$ defined in (\ref{p_hat_definition}) becomes minimax rate optimal when the parameter $p$ is not too small compared to the total number of tests $n$ and ``{\it vanishes asymptotically}'' as $n$ grows to infinity. \cite{CAI_JIN_2010} conjectured that the above estimator of $p$ in (\ref{p_hat_definition}) will remain consistent under certain forms of weakly correlated structures. Motivated by their work, we considered the estimator $\widehat{p}(\gamma)$ defined in (\ref{p_hat_definition}) for estimating the mixing proportion $p$ in our context. It is found that the conjecture of \cite{CAI_JIN_2010} is indeed affirmative in the sense that their estimator remains consistent under certain weakly correlated structures, such as, finite block dependence, short range dependence, certain intraclass correlation model where the common correlation coefficient goes to zero at an appropriate rate as the number of tests $n$ grows to infinity, and also in situations when $p$ is moderately sparse. Moreover, we also considered a moment-based estimate of $V$ which depends on $\widehat{p}(\gamma)$ defined above and found that it consistently estimates the variance $V$ of the non-zero $\theta_{i}$'s under certain weak correlation structures. However, in our simulation studies (which are not reported in this paper), we did not find any significant difference in the performance of the proposed BSD method as compared to its competitors under such weak correlation structures. Hence, we prefer not to report these results in this paper. It would be interesting to see whether the aforesaid results can be generalized to stronger forms of dependence or whether their exist some other estimators of $p$ and $V$ which can yield better performances both in terms of theory and simulations. Another very interesting problem would be to investigate theoretically whether the BSD and the MRD methods continue to have the admissibility property if we replace the vector loss function (\ref{VECTOR_LOSS}) with the sum of the individual losses (\ref{INDIVIDUAL_LOSS}) as proposed in \cite{LEHMANN1957a} and \cite{LEHMANN1957b} for multiple testing problems. We leave these issues as important problems for future research and hope to report them elsewhere.
% % % % %  \newline
% % % % % 
% % % % %  Over the recent years, there have been attempts to come up with various new type I error measures such as $k$-FWER, $k$-FDR, $\gamma$-kFDR and $\gamma$-kFDP, together with appropriate multiple testing procedures which control them in situations when test statistics are dependent. Pertinent references include \cite{RW2007}, \cite{SKS_2007}, \cite{SARKAR_GUO_2010} and \cite{GHS2014}. Such procedures were reported to yield a better power as compared to the traditional FWER or FDR controlling procedures when the test statistics are weakly correlated. However, the aforesaid gain in power tends to disappear quickly when the correlation gets stronger. This suggests that an interesting problem for future research would be to investigate theoretically the effect of correlation on the power of these tests, and to find what could possibly be a better procedure that controls some overall type I error measure, but still yields a high power even if the dependence is fairly strong. Another very interesting problem would be to investigate theoretically whether the BSD and the MRD methods continue to have the admissibility property if we replace the vector loss function (\ref{VECTOR_LOSS}) with the sum of the individual losses (\ref{INDIVIDUAL_LOSS}) as proposed in \cite{LEHMANN1957a, LEHMANN1957b} for multiple testing problems. We leave this as an important research problem for our future study.
\appendix  
\section{Appendix}
\label{app}
Proofs of some of the results of this paper make use of the following important results from theory of matrices. Of them the first one is the celebrated Sherman$-$Morrison$-$Woodbury (SMW) identity, while the other one provides an important formula for obtaining the inverse of a $2\times 2$ partitioned matrix. See, for example, \cite{SANT_WILL_NOTZ} among many other sources.\newline
% % % of this thesis made use of the following celebrated result from matrix algebra, popularly known as The .\newline

\begin{lem}\label{SWM_IDENTITY}
  Suppose that B is any $n \times n$ nonsingular matrix, C is a $r \times r$ non-singular matrix, and A is an arbitrary $n \times r$ matrix such that $(A^{T}B^{-1}A+C)^{-1}$ is nonsingular. Then $(B+A^{T}C^{-1}A)$ is $n \times n$ non-singular with inverse given by,
  \begin{equation}
   (B+A^{T}C^{-1}A)^{-1} = B^{-1} - B^{-1}A(A^{T}B^{-1}A+C)^{-1}A^{T}B^{-1}.\nonumber
  \end{equation}   
\end{lem}

\begin{lem}\label{INV_PART_MAT}
Suppose that $B$ is a $n \times n$ non-singular matrix and
 \begin{eqnarray}
 T
 &=&
 \begin{pmatrix}
 D & A^{T}\\ 
 A & B
 \end{pmatrix},\nonumber
 \end{eqnarray}
where $D$ is $m\times m$ and $A$ is $n\times m$. Then $T$ is non-singular if and only if
\begin{equation}
 Q=D-A^{T}B^{-1}A\nonumber
\end{equation}
is non-singular. In this case, $T^{-1}$ is given by
 \begin{eqnarray}
 T^{-1}
 &=&
 \begin{pmatrix}
 Q^{-1} & -Q^{-1}A^{T}B^{-1}\\ 
 -B^{-1}AQ^{-1} & B^{-1}+B^{-1}AQ^{-1}A^{T}B^{-1}
 \end{pmatrix}.\nonumber
 \end{eqnarray}
\end{lem}

\begin{flushleft}
  \textbf{Proof of Theorem \ref{THM_BSD_MRD_CONNECTION}}
\end{flushleft}
  \begin{proof}
    Observe that one can write each test statistic $S_{tj}^{(i_1,\dots,i_{t-1})}$ as
  \begin{eqnarray}\label{MRD_BSD_RELATION_1}
  S_{tj}^{(i_1,\dots,i_{t-1})}(\vec{X})
  &=& \frac{\pi(\nu_j=1,\vec{\nu}^{(i_1,\dots,i_{t-1}, j)}=\vec{0}) f(\vec{X}^{(i_1,\dots,i_{t-1})}|\nu_j=1,\vec{\nu}^{(i_1,\dots,i_{t-1}, j)}=\vec{0})}{\pi(\nu_j=0,\vec{\nu}^{(i_1,\dots,i_{t-1}, j)}=\vec{0}) f(\vec{X}^{(i_1,\dots,i_{t-1})}|\nu_j=0,\vec{\nu}^{(i_1,\dots,i_{t-1}, j)}=\vec{0})}\nonumber\\
  &=& \frac{p}{1-p}\times \frac{f(\vec{X}^{(i_1,\dots,i_{t-1})}|\nu_j=1,\vec{\nu}^{(i_1,\dots,i_{t-1}, j)}=\vec{0})}{f(\vec{X}^{(i_1,\dots,i_{t-1})}|\nu_j=0,\vec{\nu}^{(i_1,\dots,i_{t-1}, j)}=\vec{0})}\cdot
  %  &=& \frac{p}{1-p}\times \sqrt{\frac{\sigma_{j \cdot j_1,\dots,j_{t-1}}}{V+\sigma_{j \cdot j_1, \dots, j_{t-1} }}}\times \exp\bigg\{\frac{1}{2}\big(1 - \frac{\sigma_{j \cdot j_1,\dots,j_{t-1}}}{V+\sigma_{j \cdot j_1, \dots, j_{t-1} }} \big)\{U_{tj}^{(j_1,\dots,j_{t-1})}(\vec{x})\}^2\bigg\} \nonumber
  \end{eqnarray}
  
  Let us write
  \begin{equation}\label{MRD_BSD_RELATION_2} 
  \left.\begin{aligned}
  \vec{\Sigma}_{0,j}^{(i_1,\dots,i_{t-1})} &= \mbox{ } \vec{\Sigma}_{(i_1,\dots,i_{t-1})}+V diag(\nu_j=0,\vec{\nu}^{(i_1,\dots,i_{t-1}, j)}=\vec{0})\\
  \vec{\Sigma}_{1,j}^{(i_1,\dots,i_{t-1})}&=\vec{\Sigma}_{(i_1,\dots,i_{t-1})}+V diag(\nu_j=1,\vec{\nu}^{(i_1,\dots,i_{t-1}, j)}=\vec{0})
  \end{aligned}\right\}.
  \end{equation}
% 
%   \begin{eqnarray}\label{}
%     &=&\nonumber\\
%     \nonumber
%   \end{eqnarray}
  
  It is important to note that for any $t=1,\dots,n$ and for any $j=1,\dots,n,$ where $i_l \neq j$ for all l, we have the following:
  \begin{eqnarray}\label{MRD_BSD_RELATION_3}
  \vec{\Sigma}_{0,j}^{(i_1,\dots,i_{t-1})}
  &=&\big(\vec{\Sigma}_{(i_1,\dots,i_{t-1})}+V diag(\nu_j=0,\vec{\nu}^{(i_1,\dots,i_{t-1}, j)}=\vec{0})\big)_{(-j,-j)}\nonumber\\
  &=&\big(\vec{\Sigma}_{(i_1,\dots,i_{t-1})}\big)_{(-j,-j)} \nonumber\\
  &=&\vec{\Sigma}_{(i_1,\dots,i_t,j)}
  \end{eqnarray}
  and
  \begin{eqnarray}\label{MRD_BSD_RELATION_4}
  \vec{\Sigma}_{1,j}^{(i_1,\dots,i_{t-1})}
  &=&\big(\vec{\Sigma}_{(i_1,\dots,i_{t-1})}+V diag(\nu_j=1,\vec{\nu}^{(i_1,\dots,i_{t-1}, j)}=\vec{0})\big)_{(-j,-j)}\nonumber\\
  &=&\big(\vec{\Sigma}_{(i_1,\dots,i_{t-1})}\big)_{(-j,-j)} \nonumber\\
  &=&\vec{\Sigma}_{(i_1,\dots,i_t,j)} 
  \end{eqnarray}
 where $A_{(-j,-j)} $ denotes the sub-matrix of a matrix $A$ obtained after removing its $j^{th}$ row and $j^{th}$ column.  \newline
 
 Using (\ref{MRD_BSD_RELATION_3}) and (\ref{MRD_BSD_RELATION_4}), it therefore follows that for any $t=1,\dots,n$ and for any $j=1,\dots,n,$ with $i_l \neq j$ for all $l$, we have
  \begin{eqnarray}\label{MRD_BSD_RELATION_5}
  \vec{\Sigma}_{0,j}^{(i_1,\dots,i_{t-1})} = \vec{\Sigma}_{1,j}^{(i_1,\dots,i_{t-1})}.
  \end{eqnarray}
 
 Now, $f(\vec{X}^{(i_1,\dots,i_{t-1})}|\nu_j=0,\vec{\nu}^{(i_1,\dots,i_{t-1}, j)}=\vec{0})$ corresponds to the probability density function of a $N(\vec{0},\vec{\Sigma}_{0,j}^{(i_1,\dots,i_{t-1})})$ distribution. Therefore using equation (2.4) one can write,  
    \begin{eqnarray}\label{MRD_BSD_RELATION_6}
    f(\vec{X}^{(i_1,\dots,i_{t-1})}|\nu_j=0,\vec{\nu}^{(i_1,\dots,i_{t-1}, j)}=\vec{0})
    &=& N(u_{j\cdot(i_1 \dots, i_{t-1})}(\vec{X}),\sigma_{j\cdot(i_1,\dots,i_{t-1})})(X_j)\nonumber\\
    &\times& N(\vec{0},\vec{\Sigma}_{(i_1,\dots,i_t,j)})(\vec{X}^{(i_1,\dots,i_{t-1},j)})
    \end{eqnarray}
    where $u_{j\cdot(i_1 \dots, i_{t-1})}(\vec{X})={\vec{\sigma}_{(j)}^{(i_1 \dots, i_{t-1})}}^{T}{\vec{\Sigma}^{-1}_{(i_1,\dots,i_{t-1},j)}}{\vec{X}^{(i_1,\dots,i_{t-1},j)}}$ and the term on the right hand side of (\ref{MRD_BSD_RELATION_6}) denote the probability densities of the corresponding normal distributions evaluated at the appropriate points.\newline
 
   In a similar way we can write the following:
    \begin{eqnarray}\label{MRD_BSD_RELATION_7}
    f(\vec{X}^{(i_1,\dots,i_{t-1})}|\nu_j=1,\vec{\nu}^{(i_1,\dots,i_{t-1}, j)}=\vec{0})
    &=& N(u_{j\cdot(i_1 \dots, i_{t-1})}(\vec{X}),V+\sigma_{j\cdot(i_1,\dots,i_{t-1})})(X_j)\nonumber\\
    &\times& N(\vec{0},\vec{\Sigma}_{(i_1,\dots,i_t,j)})(\vec{X}^{(i_1,\dots,i_{t-1},j)}).
    \end{eqnarray}
    
 On combining equations (\ref{MRD_BSD_RELATION_1}), (\ref{MRD_BSD_RELATION_3}) - (\ref{MRD_BSD_RELATION_7}), together with some subsequent straightforward calculus, leads to the proof of Theorem \ref{THM_BSD_MRD_CONNECTION}.
    \end{proof}  

\begin{flushleft}    
\textbf{Proof of Lemma \ref{LEM_EQUALITY_OF_INDICES}}
\end{flushleft}

\begin{proof}
First observe that since both $t >1$ and $t_0 >1$, one must have $j_1(\vec{x}^{*}+r_{0}\vec{g}) \neq 1$ and $j_1(\vec{x}^{*}) \neq 1$. Then using the observation made in Remark \ref{REMARK_INCREASING_DECREASING} we obtain,
\begin{eqnarray}\label{LEM_EQUALITY_OF_INDICES_EQ1}
j_1(\vec{x}^{*}+r_{0}\vec{g})
&=& \argmax_{j \in \{2,\dots,n\}} S_{1j}(\vec{x}^{*}+r_{0}\vec{g})\nonumber\\
&=& \argmax_{j \in \{2,\dots,n\}} S_{1j}(\vec{x}^{*})\nonumber\\
&=& j_1(\vec{x}^{*}).
\end{eqnarray}

Now using Lemma \ref{LEM_PROP_OF_MRD_STAT} it follows that, for all $l=1,\dots,t_0-1$ with $j_l(\vec{x}^{*}+r_{0}\vec{g})\neq 1$, and for all $j\in\{1,\dots,n\}\setminus\{j_1(\vec{x}^{*}+r_{0}\vec{g}),\dots,j_{t_0-1}(\vec{x}^{*}+r_{0}\vec{g})\}$,
\begin{equation}\label{LEM_EQUALITY_OF_INDICES_EQ2}
 U_{lj}^{(j_1(\vec{x}^{*}+r_{0}\vec{g}),\dots,j_{l-1}(\vec{x}^{*}+r_{0}\vec{g}))}(\vec{x}^{*}+r_{0}\vec{g})=U_{lj}^{(j_1(\vec{x}^{*}+r_{0}\vec{g}),\dots,j_{l-1}(\vec{x}^{*}+r_{0}\vec{g}))}(\vec{x}^{*}).
\end{equation}

Therefore, we obtain for all $l=1,\dots,t_0-1$ with $j_l(\vec{x}^{*}+r_{0}\vec{g}) \neq 1$, and for all $j\in\{1,\dots,n\} \setminus \{j_1(\vec{x}^{*}+r_{0}\vec{g}),\dots,j_{t_0-1}(\vec{x}^{*}+r_{0}\vec{g})\}$,
\begin{equation}\label{LEM_EQUALITY_OF_INDICES_EQ3}
 S_{lj}^{(j_1(\vec{x}^{*}+r_{0}\vec{g}),\dots,j_{l-1}(\vec{x}^{*}+r_{0}\vec{g}))}(\vec{x}^{*}+r_{0}\vec{g})=S_{lj}^{(j_1(\vec{x}^{*}+r_{0}\vec{g}),\dots,j_{l-1}(\vec{x}^{*}+r_{0}\vec{g}))}(\vec{x}^{*}).
\end{equation}

In particular, for all $l < t_0$, 
% $$U_{lj_l(\vec{x}^{*}+r_{0}\vec{g})}^{(j_1(\vec{x}^{*}+r_{0}\vec{g}),\dots,j_{l-1}(\vec{x}^{*}+r_{0}\vec{g}))}(\vec{x}^{*}+r_{0}\vec{g}) = U_{lj_l(\vec{x}^{*}+r_{0}\vec{g})}^{(j_1(\vec{x}^{*}+r_{0}\vec{g}),\dots,j_{l-1}(\vec{x}^{*}+r_{0}\vec{g}))}(\vec{x}^{*}),$$
% which implies that for all $l = 1,\dots,t_0 -1,$
\begin{equation}\label{LEM_EQUALITY_OF_INDICES_EQ4}
 S_{lj_l(\vec{x}^{*}+r_{0}\vec{g})}^{(j_1(\vec{x}^{*}+r_{0}\vec{g}),\dots,j_{l-1}(\vec{x}^{*}+r_{0}\vec{g}))}(\vec{x}^{*}+r_{0}\vec{g}) = S_{lj_l(\vec{x}^{*}+r_{0}\vec{g})}^{(j_1(\vec{x}^{*}+r_{0}\vec{g}),\dots,j_{l-1}(\vec{x}^{*}+r_{0}\vec{g}))}(\vec{x}^{*}).
\end{equation}

Again using Lemma \ref{LEM_PROP_OF_MRD_STAT} we have for all $l=1,\dots,t_0 -1$  with $j_l(\vec{x}^{*}+r_{0}\vec{g}) \neq 1,$ and for all $j \in \{1,\dots,n\} \setminus \{j_1(\vec{x}^{*}+r_{0}\vec{g}),\dots,j_{t-1}(\vec{x}^{*}+r_{0}\vec{g})\}$ the following:
\begin{eqnarray}
 U_{l1}^{(j_1(\vec{x}^{*}+r_{0}\vec{g}),\dots,j_{l-1}(\vec{x}^{*}+r_{0}\vec{g}))}(\vec{x}^{*} + r_{0}\vec{g})
 &=& U_{l1}^{(j_1(\vec{x}^{*}+r_{0}\vec{g}),\dots,j_{l-1}(\vec{x}^{*}+r_{0}\vec{g}))}(\vec{x}^{*})\nonumber\\
 &+& r_0\cdot \sigma_{1\cdot(j_1(\vec{x}^{*}+r_{0}\vec{g}),\dots,j_{l-1}(\vec{x}^{*}+r_{0}\vec{g}))}^{\frac{1}{2}}.\nonumber
\end{eqnarray}

The above equality implies that only the values of $S_{l1}^{(j_1(\vec{x}^{*}+r_{0}\vec{g}),\dots,j_{l-1}(\vec{x}^{*}+r_{0}\vec{g}))}(\vec{x}^{*}+r_{0}\vec{g})$ can change for $l=1,\dots,t_0-1$. Again since $H_{01}$ is rejected at the $t_0$-th stage when $\vec{x}^{*}+r_{0}\vec{g}$ is observed, for each $l=1,\dots, t_0-1$, $S_{l1}^{(j_1(\vec{x}^{*}+r_{0}\vec{g}),\dots,j_{l-1}(\vec{x}^{*}+r_{0}\vec{g}))}(\vec{x}^{*}+r_{0}\vec{g})$ cannot be the maximum of the corresponding $S_{lj}^{(j_1(\vec{x}^{*}+r_{0}\vec{g}),\dots,j_{l-1}(\vec{x}^{*}+r_{0}\vec{g}))}(\vec{x}^{*}+r_{0}\vec{g})$'s, since in that case $H_{01}$ would have been rejected before the $t_0$-th step which would be a contradiction. Using this observation and equations (\ref{LEM_EQUALITY_OF_INDICES_EQ1}), (\ref{LEM_EQUALITY_OF_INDICES_EQ3}) and (\ref{LEM_EQUALITY_OF_INDICES_EQ4}) it therefore follows that for any $1 \leqslant l \leqslant t_0-1$,
\begin{eqnarray}
j_l(\vec{x}^{*}+r_{0}\vec{g})
&=& \argmax_{j \in \{2,\dots,n\}\setminus\{j_1(\vec{x}^{*}+r_{0}\vec{g}), \dots, j_{l-1}(\vec{x}^{*}+r_{0}\vec{g})\}} S_{lj}^{(j_1(\vec{x}^{*}+r_{0}\vec{g}),\dots, j_{l-1}(\vec{x}^{*}+r_{0}\vec{g}))}(\vec{x}^{*}+r_{0}\vec{g})\nonumber\\
&=& \argmax_{j \in \{2,\dots,n\}\setminus\{j_1(\vec{x}^{*}+r_{0}\vec{g}),\dots, j_{l-1}(\vec{x}^{*}+r_{0}\vec{g})\}} S_{lj}^{(j_1(\vec{x}^{*}+r_{0}\vec{g}),\dots, j_{l-1}(\vec{x}^{*}+r_{0}\vec{g}))}(\vec{x}^{*})\nonumber\\
&=& \argmax_{j \in \{2,\dots,n\}\setminus\{j_1(\vec{x}^{*}), \dots, j_{l-1}(\vec{x}^{*})\}} S_{lj}^{(j_1(\vec{x}^{*}), \dots, j_{l-1}(\vec{x}^{*}))}(\vec{x}^{*})\nonumber\\
&=& j_l(\vec{x}^{*})\nonumber
\end{eqnarray}
This completes the proof of Lemma \ref{LEM_EQUALITY_OF_INDICES}.
\end{proof}

\begin{flushleft}
\textbf{Proof of Lemma \ref{LEM_BSD_IMP_OBS}}
\end{flushleft}
\begin{proof}
First observe that, when $t_0=1$, since $\phi_1 (\vec{x}^{*}) =0$ and $\phi_1 (\vec{x}^{*}+r_{0}\vec{g}) = 1$, one cannot have $t=1$ due to (\ref{LEM_EQUALITY_OF_INDICES_EQ1}). Therefore we must have $t>1$ when $t_0=1$. Thus the result is true when $t_0=1$. However, proof for the case when both $t>1$ and $t_0 > 1$ is non-trivial and requires a contrapositive argument and Lemma \ref{LEM_EQUALITY_OF_INDICES}. So, let us now consider the case when $t>1$ and $t_0 > 1$.\newline

Since $t>1$, we have $S_{tj_l(\vec{x}^{*})}^{(j_1(\vec{x}^{*}),\dots,j_{t-1}(\vec{x}^{*}))}(\vec{x}^{*}) > \delta$ for all $l=1,\dots, t-1$, with $j_l(\vec{x}^{*}) \neq 1$ for each $l$ and
\begin{eqnarray}
S_{tj_{t}(\vec{x}^{*})}^{(j_1(\vec{x}^{*}),\dots,j_{t-1}(\vec{x}^{*}))}(\vec{x}^{*}) \leqslant \delta \Longrightarrow S_{tj}^{(j_1(\vec{x}^{*}),\dots,j_{t-1}(\vec{x}^{*}))}(\vec{x}^{*}) \leqslant \delta \nonumber
\end{eqnarray}
for all $j \in \{1,\dots,n\} \setminus \{j_1(\vec{x}^{*}),\dots,j_{t-1}(\vec{x}^{*})\}$, with $j_l(\vec{x}^{*}) \neq 1$ for all $l \in \{1,\dots,t-1\}$.\newline

On contrary, let us now assume that $t_0>t$. Then
\begin{eqnarray}
 S_{tj_t(\vec{x}^{*}+r_{0}\vec{g})}^{(j_1(\vec{x}^{*}+r_{0}\vec{g}),\dots,j_{t-1}(\vec{x}^{*}+r_{0}\vec{g}))}(\vec{x}^{*}+r_{0}\vec{g})>\delta, 
\end{eqnarray}
otherwise the process would have stopped at stage $t$ without rejecting $H_{01}$ when $\vec{x}^{*}+r_{0}\vec{g}$ is observed, which would be a contradiction.\newline

Now by using Lemma \ref{LEM_PROP_OF_MRD_STAT} and a subsequent application of Lemma \ref{LEM_EQUALITY_OF_INDICES} it follows
\begin{eqnarray}
 U_{tj_t(\vec{x}^{*}+r_{0}\vec{g})}^{(j_1(\vec{x}^{*}+r_{0}\vec{g}),\dots,j_{t-1}(\vec{x}^{*}+r_{0}\vec{g}))}(\vec{x}^{*}+r_{0}\vec{g})
 &=& U_{tj_t(\vec{x}^{*}+r_{0}\vec{g})}^{(j_1(\vec{x}^{*}+r_{0}\vec{g}),\dots,j_{t-1}(\vec{x}^{*}+r_{0}\vec{g}))}(\vec{x}^{*})\nonumber\\
 &=& U_{tj_t(\vec{x}^{*})}^{(j_1(\vec{x}^{*}),\dots,j_{t-1}(\vec{x}^{*}))}(\vec{x}^{*}).\nonumber
\end{eqnarray}

Therefore we have
\begin{eqnarray}
 S_{tj_t(\vec{x}^{*})}^{(j_1(\vec{x}^{*}),\dots,j_{t-1}(\vec{x}^{*}))}(\vec{x}^{*})
 &=& S_{tj_t(\vec{x}^{*}+r_{0}\vec{g})}^{(j_1(\vec{x}^{*}+r_{0}\vec{g}),\dots,j_{t-1}(\vec{x}^{*}+r_{0}\vec{g}))}(\vec{x}^{*}+r_{0}\vec{g})\nonumber\\
 &>& \delta.\nonumber
\end{eqnarray}

This means that when $\vec{x}^{*}$ is observed, the testing procedure cannot stop at stage $t$ and consequently $\phi_1(\vec{x}^{*})\neq 0$, which is a contradiction. This completes the proof of Lemma \ref{LEM_BSD_IMP_OBS}.
\end{proof}

\begin{flushleft}
\textbf{Proof of Lemma \ref{LEM_PROP_BSD_RELN_TWO_OBS}}
\end{flushleft}

\begin{proof}
Let us first consider the situation when $t_0>1$.\newline

Observe that when $t_0 >1$ we have,
\begin{eqnarray}
 S_{t_{0}1}^{(j_1(\vec{x}^{*}),\dots,j_{t_{0}-1}(\vec{x}^{*}))}(\vec{x}^{*})
 &\leqslant& S_{t_{0}j_{t_{0}}(\vec{x}^{*})}^{(j_1(\vec{x}^{*}),\dots,j_{t_{0}-1}(\vec{x}^{*}))}(\vec{x}^{*}) \nonumber\\
 &=& S_{t_{0}j_{t_{0}}(\vec{x}^{*})}^{(j_1(\vec{x}^{*}),\dots,j_{t_{0}-1}(\vec{x}^{*}))}(\vec{x}^{*}+r_{0}\vec{g}) \nonumber\\
 &=& S_{t_{0}j_{t_{0}}(\vec{x}^{*})}^{(j_1(\vec{x}^{*}+r_{0}\vec{g}),\dots,j_{t_{0}-1}(\vec{x}^{*}+r_{0}\vec{g}))}(\vec{x}^{*}+r_{0}\vec{g}) \nonumber\\
 &\leqslant& S_{t_{0}1}^{(j_1(\vec{x}^{*}+r_{0}\vec{g}),\dots,j_{t_{0}-1}(\vec{x}^{*}+r_{0}\vec{g}))}(\vec{x}^{*}+r_{0}\vec{g}) \nonumber\\
 &=& S_{t_{0}1}^{(j_1(\vec{x}^{*}),\dots,j_{t_{0}-1}(\vec{x}^{*}))}(\vec{x}^{*}+r_{0}\vec{g}).\nonumber
\end{eqnarray}

Since for given $j_1,\dots,j_{t_{0}-1},$ $S_{t_{0}1}^{(j_1,\dots,j_{t_{0}-1})}$ is a strictly increasing function of
$|U_{t_{0}1}^{(j_1,\dots,j_{t_{0}-1})}|,$ it follows that
\begin{align}
|U_{t_{0}1}^{(j_1(\vec{x}^{*}),\dots,j_{t_{0}-1}(\vec{x}^{*}))}(\vec{x}^{*})|
&\leqslant |U_{t_{0}1}^{(j_1(\vec{x}^{*}),\dots,j_{t_{0}-1}(\vec{x}^{*}))}(\vec{x}^{*}+r_{0}\vec{g})|\label{LEM_PROP_OF_MRD_STAT_EQ1}
\end{align}
whence it follows from Remark \ref{REMARK_INCREASING_DECREASING} that
\begin{align}
U_{t_{0}1}^{(j_1(\vec{x}^{*}),\dots,j_{t_{0}-1}(\vec{x}^{*}))}(\vec{x}^{*}+r_{0}\vec{g})>0.\nonumber
\end{align}

But for given $(j_1,\dots,j_{t_{0}-1})$, the function $U_{t_{0}1}^{(j_1,\dots,j_{t_{0}-1})}(\vec{x}^{*}+r\vec{g})$ is strictly increasing in $r$. Hence for all $r > r_0,$ we have
\begin{eqnarray}\label{LEM_PROP_OF_MRD_STAT_EQ2}
 U_{t_{0}1}^{(j_1(\vec{x}^{*}),\dots,j_{t_{0}-1}(\vec{x}^{*}))}(\vec{x}^{*}+r\vec{g})
 &>& U_{t_{0}1}^{(j_1(\vec{x}^{*}),\dots,j_{t_{0}-1}(\vec{x}^{*}))}(\vec{x}^{*}+r_{0}\vec{g})>0.
 %&>& 0.\nonumber
\end{eqnarray}

We shall complete the proof now based on a contrapositive argument. Recall that, we need to show $\phi_1(\vec{x}^{*}+r\vec{g})=1$ for all $r > r_0.$ On contrary, suppose this is not true. Then there exists some $r_1 > r_0 $ such that $\phi_1(\vec{x}^{*}+r_1\vec{g})=0.$ Let $t_1$ denote the step at which the testing procedure must stop without rejecting $H_{01}$ when $\vec{x}^{*}+r_1\vec{g}$ is observed. Then using Lemma \ref{LEM_BSD_IMP_OBS} we have $t_0 \leqslant t_1$. Since $t_0>1$, using Lemma \ref{LEM_EQUALITY_OF_INDICES} it follows
\begin{eqnarray}
 j_l(\vec{x}^{*}+r_1\vec{g})
 &=& j_l(\vec{x}^{*}+r_{0}\vec{g})\nonumber\\
 &=& j_l(\vec{x}^{*})\nonumber
\end{eqnarray}
for all $l=1,\dots,t_0-1$.\newline

Again, replacing $\vec{x}^{*}$ by $\vec{x}^{*}+r_1\vec{g}$, and applying the preceding arguments, from (\ref{LEM_PROP_OF_MRD_STAT_EQ1}) we obtain 
\begin{eqnarray}
|U_{t_{0}1}^{(j_1(\vec{x}^{*}),\dots,j_{t_{0}-1}(\vec{x}^{*}))}(\vec{x}^{*}+r_1\vec{g})|
&\leqslant& |U_{t_{0}1}^{(j_1(\vec{x}^{*}),\dots,j_{t_{0}-1}(\vec{x}^{*}))}(\vec{x}^{*}+r_{0}\vec{g})|\nonumber
\end{eqnarray}
which contradicts (\ref{LEM_PROP_OF_MRD_STAT_EQ2}). Therefore one must have $\phi_1(\vec{x}^{*}+r\vec{g})=1 $ for all $r>r_0$, when $t_0>1$.\newline

Next observe that when $t_0=1$, since $S_{1j}(\vec{x}^{*}+r_{0}\vec{g})=S_{1j}(\vec{x}^{*})$ for all $j \in \{2,\dots,n\}$, one must have $S_{11}(\vec{x}^{*}) < S_{11}(\vec{x}^{*}+r_{0}\vec{g})$. Therefore, using exactly the same arguments as before we have,
\begin{eqnarray}
 U_{11}(\vec{x}^{*}+r\vec{g}) &>& U_{11}(\vec{x}^{*}+r_{0}\vec{g}) \mbox{ }>\mbox{ } 0 \mbox{ for all } r>r_0.\nonumber 
\end{eqnarray}

Therefore, using the preceding arguments together with Corollary \ref{COR_FURTH_PROP_MRD_BRD_STAT}, for all $r > r_0$ we obtain
\begin{eqnarray}
 S_{11}(\vec{x}^{*}+r\vec{g})
 &>& S_{11}(\vec{x}^{*}+r_{0}\vec{g})\nonumber\\
 &\geqslant& S_{1j}(\vec{x}^{*}+r_{0}\vec{g})\mbox{ for all }j\in\{2,\dots,n\}\mbox{ }\big[\mbox{since }t_0=1\big]\nonumber\\
 &=& S_{1j}(\vec{x}^{*}) \mbox{ for all } j\in \{2,\dots,n\}\nonumber\\
 &=& S_{1j}(\vec{x}^{*}+r\vec{g}) \mbox{ for all } j\in \{2,\dots,n\}.\nonumber
\end{eqnarray}

This implies that every $\vec{x}^{*}+r\vec{g}$ will be a point of rejection for $H_{01}$ for all $r > r_0$, that is, $\phi_1(\vec{x}^{*}+r\vec{g})=1 $ for all $r > r_0$ when $t_0=1$. This completes the proof of Lemma \ref{LEM_PROP_BSD_RELN_TWO_OBS}.
\end{proof}

% % % \begin{flushleft}
% % % \textbf{Proof of Theorem \ref{THM_ADMISSIBILITY_BSD}}
% % % \end{flushleft}
% % % 
% % % \begin{proof}
% % % Recall that for the present testing problem admissibility of a multiple testing procedure implies that it would be admissible for each individual testing problem. Now using Lemma \ref{LEM_MATH_TRUAX} and Lemma \ref{LEM_PROP_BSD_RELN_TWO_OBS}, it follows that for testing $H_{01} \mbox { vs } H_{A1}$ the test $\phi_1(\vec{x})$, induced by the BSD method, would be admissible. Proof that the other tests induced by the BSD method for the remaining individual testing problems will be admissible would follow analogously.
% % % \end{proof}
% % % 

\begin{flushleft}
  \textbf{Proof of Lemma \ref{LEM_ALT_REP_1}}
\end{flushleft}

\begin{proof}
Let us write $B_{1,\vec{\nu}}=\vec{\Sigma}+VB_{\vec{\nu}:\nu_i=1}$ and $B_{0,\vec{\nu}}=\vec{\Sigma}+VB_{\vec{\nu}:\nu_i=0}$. Then letting $\vec{e_i}$ to be the unit vector with the $i$-th component unity, we get
\begin{eqnarray}\label{LEM_ALT_REP_1_EQ1}
 B_{1,\vec{\nu}}
 &=& (\vec{\Sigma}+VB_{\vec{\nu}:\nu_i=0}) + Vdiag(\vec{e_i})\nonumber\\
 &=& B_{0,\vec{\nu}} + Vdiag(\vec{e_i})\nonumber\\
 &=& B_{0,\vec{\nu}} + AI^{-1}A^{T}
\end{eqnarray}
where $A=\sqrt{V}diag(\vec{e_i})=A^{T}$ and $diag(\vec{e_i})$ denotes a diagonal matrix with diagonal vector $\vec{e_i}$.\newline

Therefore, using (\ref{LEM_ALT_REP_1_EQ1}) and applying the Sherman-Morrison-Woodbury identity as given in Lemma \ref{SWM_IDENTITY} in the Appendix, we obtain
\begin{eqnarray}\label{LEM_ALT_REP_1_EQ2}
 B_{1,\vec{\nu}}^{-1}
 &=& (B_{0,\vec{\nu}} + AI^{-1}A^{T})^{-1}\nonumber\\
 &=& B_{0,\vec{\nu}}^{-1} - B_{0,\vec{\nu}}^{-1}A(A^{T}B_{0,\vec{\nu}}^{-1}A+I)^{-1}A^{T}B_{0,\vec{\nu}}^{-1}.
\end{eqnarray}

Let $B_{0,\vec{\nu}}^{-1}=((b_{ij}))_{n \times n}$. Since the matrix $B_{0,\vec{\nu}}$ is positive definite, its inverse $B_{0,\vec{\nu}}^{-1}$ is also positive definite. Hence, $b_{ii}>0$ for all $i=1,\dots,n$.\newline

Next we observe that,
\begin{eqnarray}
 A^{T}B_{0,\vec{\nu}}^{-1}A
%  &=& V \begin{pmatrix} \vec{0}^{T}\\ \vdots \\ \vec{0}^{T}\\ \vec{e_i}^{T}\\ \vec{0}^{T}\\ \vdots\\ \vec{0}^{T}\end{pmatrix}\begin{pmatrix} \vec{b_1}, & \dots, & \vec{b_{i-1}}, & \vec{b_i}, & \vec{b_{i+1}}, & \dots, & \vec{b_n}\end{pmatrix}\begin{pmatrix}\vec{0}, & \dots, & \vec{0}, & \vec{e_{i}}, & \vec{0}, & \dots, & \vec{0}\end{pmatrix}\nonumber\\
 &=& V \begin{pmatrix} 0 & 0 & \dots & 0 \\ \vdots \\ 0 & 0 & \dots & 0 \\ b_{1i} & b_{2i} & \dots & b_{ni} \\ 0 & 0 & \dots & 0\\ \vdots\\0 & 0 & \dots & 0\end{pmatrix}\begin{pmatrix}\vec{0}, & \dots, & \vec{0}, & \vec{e_{i}}, & \vec{0}, & \dots, & \vec{0}\end{pmatrix}\nonumber\\
 &=& Vdiag(0,\dots,0,b_{ii},0,\dots,0)\nonumber\\
%  &=& V b_{ii} diag(0,\dots,0,1,0,\dots,0)\nonumber\\
 &=& V b_{ii} diag(\vec{e_i}).\label{LEM_ALT_REP_1_EQ3}
\end{eqnarray}

Therefore, using (\ref{LEM_ALT_REP_1_EQ3}) we get
\begin{equation}
 I+A^{T}B_{0,\vec{\nu}}^{-1}A=diag(1,\dots,1,1+Vb_{ii},1,\dots,1)\nonumber
\end{equation}
whence we have
\begin{equation}
 (I+A^{T}B_{0,\vec{\nu}}^{-1}A)^{-1}=diag(1,\dots,1,\frac{1}{1+Vb_{ii}},1,\dots,1).\label{LEM_ALT_REP_1_EQ4}
\end{equation}

Note that the matrix $A$ is symmetric, that is, $A=A^{T}$. Therefore, using (\ref{LEM_ALT_REP_1_EQ4}) and then applying exactly the same arguments used for proving (\ref{LEM_ALT_REP_1_EQ3}), we obtain
% \begin{eqnarray}
%  A(I+A^{T}B_{0,\vec{\nu}}^{-1}A)^{-1}A^{T} &=& A^{T}(I+A^{T}B_{0,\vec{\nu}}^{-1}A)^{-1}A\nonumber\\
%                                                   &=& \frac{V}{1+Vb_{ii}}diag(\vec{e_i})\nonumber\\
%                                                   &=& \frac{1}{1+Vb_{ii}}AA^{T}\nonumber                                                
% \end{eqnarray}
\begin{eqnarray}\label{LEM_ALT_REP_1_EQ5}
 A(I+A^{T}B_{0,\vec{\nu}}^{-1}A)^{-1}A^{T}
 &=& A^{T}(I+A^{T}B_{0,\vec{\nu}}^{-1}A)^{-1}A\nonumber\\
 &=& \frac{V}{1+Vb_{ii}}diag(\vec{e_i})\nonumber\\
 &=&\frac{1}{1+Vb_{ii}}AA^{T}.
\end{eqnarray}

Therefore, using (\ref{LEM_ALT_REP_1_EQ5}) we have
\begin{eqnarray}\label{LEM_ALT_REP_1_EQ6}
 B_{0,\vec{\nu}}^{-1}A(I+A^{T}B_{0,\vec{\nu}}^{-1}A)^{-1}A^{T}B_{0,\vec{\nu}}^{-1}&=& \frac{1}{1+Vb_{ii}}B_{0,\vec{\nu}}^{-1}AA^{T}B_{0,\vec{\nu}}^{-1}\nonumber\\
 &=& \frac{V}{1+Vb_{ii}}B_{0,\vec{\nu}}^{-1}diag(\vec{e_i})diag(\vec{e_i})^{T}B_{0,\vec{\nu}}^{-1}.
\end{eqnarray}

Now observe that 
\begin{equation}\label{LEM_ALT_REP_1_EQ7}
B_{0,\vec{\nu}}^{-1}diag(\vec{e_i})=
\begin{pmatrix}
\vec{0}, & \dots, & \vec{0}, & \vec{b_i}, & \vec{0}, & \dots, & \vec{0}
\end{pmatrix}. 
\end{equation}

Then, combining (\ref{LEM_ALT_REP_1_EQ6}) and (\ref{LEM_ALT_REP_1_EQ7}), it follows that
\begin{equation}
B_{0,\vec{\nu}}^{-1}A(I+A^{T}B_{0,\vec{\nu}}^{-1}A)^{-1}A^{T}B_{0,\vec{\nu}}^{-1} = \frac{V}{1+Vb_{ii}}\vec{b_i}\vec{b_i}^{T}.\label{LEM_ALT_REP_1_EQ8}
\end{equation}

On combining (\ref{LEM_ALT_REP_1_EQ2}) and (\ref{LEM_ALT_REP_1_EQ8}), the stated result then follows immediately.
% Thus for any $\vec{\nu} \in \{0,1\}^{n}$ we have the following identity:
% \begin{eqnarray}
%   (\vec{\Sigma}+VB_{\vec{\nu}:\nu_i=1})^{-1}=(\vec{\Sigma}+VB_{\vec{\nu}:\nu_i=0})^{-1} - \frac{V}{1+Vb_{ii}}\vec{b_i}\vec{b_i}^{T}.\nonumber
% \end{eqnarray}
\end{proof}

\begin{flushleft}
  \textbf{Proof of Lemma \ref{LEM_ALT_REP_2}}
\end{flushleft}

\begin{proof}
Let us write $B_{1,\vec{\nu}}=\vec{\Sigma}+VB_{\vec{\nu}:\nu_i=1}$ and $B_{0,\vec{\nu}}=\vec{\Sigma}+VB_{\vec{\nu}:\nu_i=0}$. Note that both $B_{1,\vec{\nu}}$ and $B_{0,\vec{\nu}}$ are positive definite, and so are their corresponding inverse matrices. Hence, $|B^{-1}_{0,\vec{\nu}}|=|B_{0,\vec{\nu}}|^{-1}>0$. Therefore,
\begin{equation}\label{LEM_ALT_REP_2_EQ1}
 \frac{|\vec{\Sigma}+VB_{\vec{\nu}:\nu_i=1}|}{|\vec{\Sigma}+VB_{\vec{\nu}:\nu_i=0}|}
 = \frac{|B_{1,\vec{\nu}} |}{| B_{0,\vec{\nu}}|}
 = |B^{-1}_{0,\vec{\nu}}B_{1,\vec{\nu}}|.
\end{equation}

% % \begin{eqnarray}\label{LEM_ALT_REP_2_EQ1}
% %  \frac{|\vec{\Sigma}+VB_{\vec{\nu}:\nu_i=1}|}{|\vec{\Sigma}+VB_{\vec{\nu}:\nu_i=0}|}
% %  &=& \frac{|B_{1,\vec{\nu}} |}{| B_{0,\vec{\nu}}|}\nonumber\\
% %  &=& |B^{-1}_{0,\vec{\nu}}|\cdot|B_{1,\vec{\nu}}|\nonumber\\
% %  &=& |B^{-1}_{0,\vec{\nu}}B_{1,\vec{\nu}}|.
% % \end{eqnarray}

Observe that, $B_{1,\vec{\nu}}=B_{0,\vec{\nu}}+Vdiag(\vec{e_i})$, where $\vec{e_i}$ denotes the unit vector with the $i$-th component unity and $diag(\vec{e_i})$ stands for a diagonal matrix with diagonal vector $\vec{e_i}$. Using this fact and applying the arguments employed in the proof of Lemma \ref{LEM_ALT_REP_1}, it follows that
 \begin{eqnarray}\label{LEM_ALT_REP_2_EQ2}
B^{-1}_{0,\vec{\nu}} B_{1,\vec{\nu}}
&=& I+VB^{-1}_{0,\vec{\nu}}diag(\vec{e_i})\nonumber\\
&=& I+V diag (\vec{0},\dots,\vec{0},\vec{b}_i(\vec{\nu}),\vec{0},\dots,\vec{0}).
\end{eqnarray}

Therefore, combining (\ref{LEM_ALT_REP_2_EQ1}) and (\ref{LEM_ALT_REP_2_EQ2}), we obtain
\begin{eqnarray}\label{LEM_ALT_REP_2_EQ3}
\frac{|\vec{\Sigma}+VB_{\vec{\nu}:\nu_i=1}|}{|\vec{\Sigma}+VB_{\vec{\nu}:\nu_i=0}|}
&=&|B^{-1}_{0,\vec{\nu}}B_{1,\vec{\nu}}|\nonumber\\
&=& |I+V diag (\vec{0},\dots,\vec{0},\vec{b}_i(\vec{\nu}),\vec{0},\dots,\vec{0})|\nonumber\\
&=& 1+Vb_{ii}(\vec{\nu}).
\end{eqnarray}

This completes the proof of Lemma \ref{LEM_ALT_REP_2}.
\end{proof}

\begin{flushleft}
 \textbf{Proof of Lemma \ref{LEM_ALT_REP_3}}
\end{flushleft}

\begin{proof}

First observe that there exists orthogonal matrices $P$ and $Q$ which respectively interchange the rows and column vectors of $\vec{\Sigma}+VB_{\vec{\nu}:\nu_i=0}$ in such a way that
\begin{equation}\label{LEM_ALT_REP_3_EQ6}
 P\big(\vec{\Sigma}+VB_{\vec{\nu}:\nu_i=0}\big)Q=\begin{pmatrix} \sigma_{ii} & \vec{\sigma}^{T}_{(-i)} \\ \vec{\sigma}_{(-i)} & (\vec{\Sigma}+VB_{\vec{\nu}:\nu_i=0})_{(-i,-i)}\end{pmatrix}
\end{equation}

Since $\sigma_{ii}-\vec{\sigma}^{T}_{(-i)}\big((\vec{\Sigma}+VB_{\vec{\nu}:\nu_i=0})_{(-i,-i)}\big)^{-1}\vec{\sigma}_{(-i)}>0$, using (\ref{LEM_ALT_REP_3_EQ6}) and Lemma \ref{INV_PART_MAT} in the Appendix, we obtain
\begin{eqnarray}\label{LEM_ALT_REP_3_EQ7}
 Q^{-1}(\vec{\Sigma}+VB_{\vec{\nu}:\nu_i=0})^{-1}P^{-1}
 =\begin{pmatrix}
d^{-1} & \vec{c}^{T}\\
\vec{c} & M      
\end{pmatrix},
\mbox{say.}
\end{eqnarray}

Here
\begin{equation}\label{LEM_ALT_REP_3_EQ8}
 d=\sigma_{ii}-\vec{\sigma}^{T}_{(-i)}\big((\vec{\Sigma}+VB_{\vec{\nu}:\nu_i=0})_{(-i,-i)}\big)^{-1}\vec{\sigma}_{(-i)}>0,
\end{equation}
\begin{eqnarray}\label{LEM_ALT_REP_3_EQ9}
 \vec{c}
 &=&-\big[\sigma_{ii}-\vec{\sigma}^{T}_{(-i)}\big((\vec{\Sigma}+VB_{\vec{\nu}:\nu_i=0})_{(-i,-i)}\big)^{-1}\vec{\sigma}_{(-i)}\big]^{-1}\big((\vec{\Sigma}+VB_{\vec{\nu}:\nu_i=0})_{(-i,-i)}\big)^{-1}\vec{\sigma}_{(-i)}\nonumber\\
 &=& - d\big((\vec{\Sigma}+VB_{\vec{\nu}:\nu_i=0})_{(-i,-i)}\big)^{-1}\vec{\sigma}_{(-i)} \mbox{ }[\mbox{using } (\ref{LEM_ALT_REP_3_EQ8})]
\end{eqnarray}
and
\begin{eqnarray}\label{LEM_ALT_REP_3_EQ10}
 M
 &=& \big((\vec{\Sigma}+VB_{\vec{\nu}:\nu_i=0})_{(-i,-i)}\big)^{-1}\nonumber\\
 &&\mbox{ }+\mbox{ }d^{-1}\big((\vec{\Sigma}+VB_{\vec{\nu}:\nu_i=0})_{(-i,-i)}\big)^{-1}\vec{\sigma}_{(-i)}\vec{\sigma}^{T}_{(-i)}\big((\vec{\Sigma}+VB_{\vec{\nu}:\nu_i=0})_{(-i,-i)}\big)^{-1}.\nonumber
\end{eqnarray}

It should now be carefully observed that $P$ and $Q$ are some appropriately chosen orthogonal matrices used to interchange the rows and column vectors respectively of the matrix $\vec{\Sigma}+VB_{\vec{\nu}:\nu_i=0}$. Therefore, by pre-multiplying and post-multiplying both sides of (\ref{LEM_ALT_REP_3_EQ7}) by $Q$ and $P$ respectively, we can get back the matrix $(\vec{\Sigma}+VB_{\vec{\nu}:\nu_i=0})^{-1}$. Hence, the constant $d^{-1}$ is noting but the term $b_{ii}(\vec{\nu})$, while the vector $\vec{c}$ as defined in (\ref{LEM_ALT_REP_3_EQ9}) above is simply the vector $(\vec{b}_{i}(\vec{\nu}))_{(-i)}$ itself. This completes the proof of Lemma \ref{LEM_ALT_REP_3}.
\end{proof}

\begin{flushleft}
 \textbf{Proof of Lemma \ref{LEM_ALT_REP_4}}
\end{flushleft}

\begin{proof}
Let us fix any $i\in\{1,\dots,n\}$ and take $\vec{\nu}_{(-i)}=\vec{0}$, that is, $\nu_j=0$ for all $j \neq i$. Also note that $\vec{\Sigma}+VB_{\vec{\nu}:\nu_i=0,\vec{\nu}_{(-i)}=\vec{0}}=\vec{\Sigma}$.\newline

Therefore, using Lemma \ref{LEM_ALT_REP_1}, for each fixed $\vec{x}\in\mathbb{R}^{n}$, we have
\begin{eqnarray}\label{LEM_ALT_REP_4_EQ1}
&&\vec{x}^{T}(\vec{\Sigma}+VB_{\vec{\nu}:\nu_i=0,\vec{\nu}_{(-i)}=\vec{0}})^{-1}\vec{x}-\vec{x}^{T}(\vec{\Sigma}+VB_{\vec{\nu}:\nu_i=1, \vec{\nu}_{(-i)}=\vec{0}})^{-1}\vec{x}\nonumber\\
% &=& \vec{x}^{T}(\vec{\Sigma}+VB_{\vec{\nu}:\nu_i=0})^{-1}\vec{x} - \big[\vec{x}^{T}(\vec{\Sigma}+VB_{\vec{\nu}:\nu_i=0})^{-1}\vec{x}- 		\frac{V}{1+Vb_{ii}}\vec{x}^{T}\vec{b_i}\vec{b_i}^{T}\vec{x}\big]\nonumber\\
&=& \vec{x}^{T}\vec{\Sigma}^{-1}\vec{x}-\bigg[\vec{x}^{T}\vec{\Sigma}^{-1}\vec{x} - 
    \frac{V}{1+Vb_{ii}}\bigg(\sum_{i=1}^{n}b_{ji}x_j\bigg)^2\bigg]\nonumber\\
&=& \frac{V}{1+Vb_{ii}}\bigg(\sum_{i=1}^{n}b_{ji}x_j\bigg)^2.
% % &=& \vec{x}^{T}(\vec{\Sigma}+VB_{\vec{\nu}:\nu_i=0})^{-1}\vec{x} - \frac{Vb_{ii}(\vec{\nu})}{1+Vb_{ii}(\vec{\nu})} \bigg(\sum_{i=1}^{n} \frac{b_{ji}(\vec{\nu})x_j}{\sqrt{b_{ii}(\vec{\nu})}}\bigg)^2.\nonumber
\end{eqnarray}

% % where $b_{ji}(\vec{\nu})$ denotes the $j$-th element of $\vec{b_i}(\vec{\nu})$ already defined in Lemma \ref{LEM_ALT_REP_1}.\newline
Again, using Lemma \ref{LEM_ALT_REP_2} we obtain
\begin{equation}\label{LEM_ALT_REP_4_EQ2}
 \frac{|\vec{\Sigma}+VB_{\vec{\nu}:\nu_i=0,\vec{\nu}_{(-i)}=\vec{0}}|}{|\vec{\Sigma}+VB_{\vec{\nu}:\nu_i=1,\vec{\nu}_{(-i)}=\vec{0}}|}
 =\frac{1}{1+Vb_{ii}}\cdot
\end{equation}

Therefore, using (\ref{LEM_ALT_REP_4_EQ1}) and (\ref{LEM_ALT_REP_4_EQ2}) we obtain
\begin{eqnarray}
 &&\frac{f(\vec{x}|\nu_i=1,\vec{\nu}_{(-i)}=\vec{0})}{f(\vec{x}|\nu_i=0,\vec{\nu}_{(-i)}=\vec{0})}\nonumber\\
 &=& \frac{(2\pi)^{-\frac{n}{2}}|\vec{\Sigma}+VB_{\vec{\nu}:\nu_i=1,\vec{\nu}_{(-i)}=\vec{0}}|^{-1/2}\exp\big\{-\frac{1}{2}\vec{x}^{T}(\vec{\Sigma}+VB_{\vec{\nu}:\nu_i=1,\vec{\nu}_{(-i)}=\vec{0}})^{-1}\vec{x}\big\}}{(2\pi)^{-\frac{n}{2}}|\vec{\Sigma}+VB_{\vec{\nu}:\nu_i=1,\vec{\nu}_{(-i)}=\vec{0}}|^{-1/2}\exp\big\{-\frac{1}{2}\vec{x}^{T}(\vec{\Sigma}+VB_{\vec{\nu}:\nu_i=1,\vec{\nu}_{(-i)}=\vec{0}})^{-1}\vec{x}\big\}}\nonumber\\
% % %  \sqrt{}\nonumber\\
 &=& \frac{1}{\sqrt{1+Vb_{ii}}}\exp\big\{\frac{1}{2}\vec{x}^{T}(\vec{\Sigma}+VB_{\vec{\nu}:\nu_i=0,\vec{\nu}_{(-i)}=\vec{0}})^{-1}\vec{x}-\frac{1}{2}\vec{x}^{T}(\vec{\Sigma}+VB_{\vec{\nu}:\nu_i=1, \vec{\nu}_{(-i)}=\vec{0}})^{-1}\vec{x} \big\}\nonumber\\
 &=& \frac{1}{\sqrt{1+Vb_{ii}}}\times \exp\bigg\{\frac{V}{2(1+Vb_{ii})} \bigg(\sum_{i=1}^{n}b_{ji}x_j\bigg)^2\bigg\}.\nonumber 
\end{eqnarray}

This completes the proof of Lemma \ref{LEM_ALT_REP_4}.
% % % The proof follows immediately by using Lemma \ref{LEM_ALT_REP_1} - Lemma \ref{LEM_ALT_REP_3} together. 
\end{proof}

\begin{flushleft}
\textbf{Proof of Theorem \ref{THM_ALTERNATIVE_REPRESENTATION}}
\end{flushleft}

\begin{proof}
Proof of this theorem follows by employing exactly the same line of arguments used for proving Lemma \ref{LEM_ALT_REP_1} - Lemma \ref{LEM_ALT_REP_4} by taking into consideration the data vector $\vec{X}^{(i_1,\dots,i_{t-1})}$ and its corresponding variance-covariance matrix $\vec{\Sigma}_{(i_1,\dots,i_{t-1})}$.
\end{proof}

\bibliographystyle{apalike}
\bibliography{prasenjit_thesis_reference}

% % \end{thebibliography}

\end{document}